\documentclass{amsart}
\usepackage{graphicx}

\setlength{\hoffset}{-18pt}
\setlength{\oddsidemargin}{15pt}  
\setlength{\evensidemargin}{15pt}  
\setlength{\marginparwidth}{54pt}  
\setlength{\textwidth}{481pt}  
\setlength{\voffset}{-18pt}  
\setlength{\marginparsep}{7pt}  
\setlength{\topmargin}{0pt}  
\setlength{\headheight}{15pt}  
\setlength{\headsep}{10pt}  
\setlength{\footskip}{27pt}  
\setlength{\textheight}{690pt}  

\def\rr{\mathbb R}

\def\HH{\mathcal H}
\def\AA{\mathcal A}


\newcommand {\nc}   {\newcommand}
\nc {\be}   {\begin{equation}} \nc {\ee}   {\end{equation}} \nc
{\beq}  {\begin{eqnarray}} \nc {\eeq}  {\end{eqnarray}} \nc {\beqs}
{\begin{eqnarray*}} \nc {\eeqs} {\end{eqnarray*}}
\def\edc{\end{document}}


\newtheorem{thm}{Theorem}[section]

\newtheorem{lem}[thm]{Lemma}

\newtheorem{prop}[thm]{Proposition}
\newtheorem{rk}[thm]{Remark}

\numberwithin{equation}{section}

\def\intspace{{\displaystyle\int_0^L}}
\def\inttime{{\displaystyle\int_0^{+\infty}}}


\theoremstyle{definition}

\numberwithin{equation}{section}
\begin{document}
	
	\title[Stability results of a distributed problem involving  Bresse system]{Stability results of a distributed problem involving  Bresse system with history and/or Cattaneo law under fully Dirichlet or mixed  boundary conditions}

	\author{Farah Abdallah}
		\address{Universit\'e Libanaise\\
			Facult\'e des Sciences 1\\
			EDST, Equipe EDP-AN\\
			Hadath, Beyrouth, Liban}
		\email{fara7abdallah@gmail.com}
	
	\author{Mouhammad Ghader}
	\address{Universit\'e Libanaise\\
		Ecole Doctorale des Sciences \\
		et de Technologie, Equipe EDP-AN\\
		Hadath, Beyrouth, Liban}
	\email{mhammadghader@hotmail.com}
	
	\author{Ali Wehbe}
	\address{Universit\'e Libanaise\\
		Facult\'e des Sciences 1\\
		EDST, Equipe EDP-AN\\
		Hadath, Beyrouth, Liban}
	\email{ali.wehbe@ul.edu.lb}

	\date{}
	
	\keywords{Bresse system, history and Cattaneo law, indirect polynomial stability,  non uniform stability, frequency domain approach.}

	\begin{abstract}
		In this paper, we study the stability of  a one-dimensional Bresse system with infinite memory type control  
		and/or with heat conduction given by Cattaneo's law acting in the shear angle displacement. When the thermal effect vanishes, the system becomes elastic with  memory term acting on one equation. Unlike \cite{Lucifatori}, \cite{hugo-Sare-racke}, and \cite{santossoufyane}, we consider the interesting case of fully Dirichlet boundary conditions. Indeed, under equal speed of propagation condition, we establish the exponential stability of the system.
		However, in the natural physical case when the speeds of propagation are different, using a spectrum method, we show that the Bresse system is not uniformly stable. In this case, we establish a polynomial energy decay rate. Our study is valid for all other mixed boundary conditions and generalizes that of
		\cite{Lucifatori}, \cite{hugo-Sare-racke}, and \cite{santossoufyane}.
	\end{abstract}
	
	\maketitle
	
	\section{Introduction}\label{se1}
	In this paper, we study the stability of the Bresse system with history and/or heat conduction given by Cattaneo's law. This system  defined on $\left(0,L\right)\times\left(0,+\infty\right)$ takes the following  form
	\begin{equation}\label{eqq1.1'}
	\left\{
	\begin{array}{lll}
	\displaystyle{
		\rho_1\varphi_{tt}-k_1 \left(\varphi_x+\psi+\mathrm{l} w\right)_x-\mathrm{l} k_3\left(w_x-\mathrm{l}\varphi\right)=0,}\\ \\
	\displaystyle{\rho_2 \psi_{tt}-k_2\psi_{xx}+k_1\left(\varphi_x+\psi+\mathrm{l}w\right)-\int_0^{+\infty} g\left(s\right)\psi_{xx}\left(x,t-s\right)ds+\delta\theta_x=0,}\\ \\
	\displaystyle{\rho_1w_{tt}-k_3\left(w_x-\mathrm{l}\varphi\right)_x+\mathrm{l}k_1\left(\varphi_x+\psi+\mathrm{l}w\right)=0,}
	\\ \\
	\displaystyle{\rho_3 \theta_t+q_x+\delta \psi_{tx}=0,}\\ \\
	\displaystyle{\tau q_t+\beta q+\theta_x=0,}
	\end{array}
	\right.
	\end{equation}
	with fully Dirichlet boundary conditions
	\begin{equation}
	\begin{array}{lll}\label{DDDD}
	\varphi\left(0,\cdot\right)=\varphi\left(L,\cdot\right)=\psi\left(0,\cdot\right)=\psi\left(L,\cdot\right)=0\quad&\text{in }\mathbb{R}_{+},\\
	w\left(0,\cdot\right)=w\left(L,\cdot\right)=\theta\left(0,\cdot\right)=\theta\left(L,\cdot\right)=0\quad&\text{in }\mathbb{R}_{+},
	\end{array}
	\end{equation}
	or with Dirichlet-Neumann-Dirichlet-Dirichlet boundary conditions
	\begin{equation}\label{DNDD}
	\begin{array}{lll}
	\varphi\left(0,\cdot\right)=\varphi\left(L,\cdot\right)=\psi_x\left(0,\cdot\right)=\psi_x\left(L,\cdot\right)=0\quad&\text{in }\mathbb{R}_{+},\\
	w\left(0,\cdot\right)=w\left(L,\cdot\right)=\theta\left(0,\cdot\right)=\theta\left(L,\cdot\right)=0\quad&\text{in }\mathbb{R}_{+},
	\end{array}
	\end{equation}
	or  with Dirichlet-Neumann-Neumann-Dirichlet boundary conditions
	\begin{equation}\label{DNND}
	\begin{array}{lll}
	\varphi\left(0,\cdot\right)=\varphi\left(L,\cdot\right)=\psi_x\left(0,\cdot\right)=\psi_x\left(L,\cdot\right)=0\quad&\text{in }\mathbb{R}_{+},\\
	w_x\left(0,\cdot\right)=w_x\left(L,\cdot\right)=\theta\left(0,\cdot\right)=\theta\left(L,\cdot\right)=0\quad&\text{in }\mathbb{R}_{+},
	\end{array}
	\end{equation}
	in addition to  the following initial conditions
	$$
	\begin{array}{lll}
	\varphi\left(\cdot,0\right)=\varphi_0\left(\cdot\right),\ \psi\left(\cdot,-t\right)=\psi_0\left(\cdot,t\right),\
	
	w\left(\cdot,0\right)=w_0\left(\cdot\right),\ \theta(0,\cdot)=\theta_0,\ q(0,\cdot)=q_0,\\
	
	\varphi_t\left(\cdot,0\right)=\varphi_1\left(\cdot\right),\
	\psi_t\left(\cdot,0\right)=\psi_1\left(\cdot\right),\
	w_t\left(\cdot,0\right)=w_1\left(\cdot\right) \ \ \ \textrm{in} \ (0,L).\\
	\end{array}
	$$
	
	The functions $\varphi,\ \psi,$ and  $w$     model the vertical, shear angle, and   longitudinal  displacements  of the filament. The functions  $\theta$ and $q$ model  the temperature difference and  the heat flux respectively.
	The coefficients $\rho_1,\ \rho_2,\ \rho_3,\ k_1,\ k_2,\ k_3,\ \mathrm{l},\ \delta,\ \tau, \ \beta$   are positive constants. The integral term represents a history term with kernel $g$ satisfying the following hypothesis:
	\begin{itemize}
		\item [{\rm (H)}]  $g:\mathbb{R}_+\to \mathbb{R}_+$ is a non-increasing differentiable  function   such that  $\displaystyle{\lim_{s\to0} g\left(s\right)}$  exists and there exists $c>0$ such that $$g'\left(s\right)\leq -c g\left(s\right).$$ Furthermore, we assume that $\widetilde{k}_2>0$ where $\widetilde{k}_{2}:=k_2- g_0,\ \text{ and } g_0=\displaystyle{\int_{0}^{+\infty}g\left(s\right)ds}.$
	\end{itemize}
	Indeed, the condition $\displaystyle{\lim_{s\to0} g\left(s\right)}$  is sufficient near zero and can replace the condition
	$g'\left(s\right)\geq -\widetilde{c} g\left(s\right)$ for some $\widetilde{c}>0$ considered in
	\cite{Lucifatori}, \cite{hugo-Sare-racke},  and \cite{santossoufyane}. Therefore, in this paper,
	hypothesis {\rm (H)} is an improved   condition  on the kernel function appearing in the history term.\\
	
	When $\delta=0$, decoupling occurs and  the  thermal effect in system \eqref{eqq1.1'} vanishes. Consequently, the study of the stability of system  \eqref{eqq1.1'} is reduced to the study of
	the Bresse system without heat conduction but with an infinite memory type control  acting only in the shear angle displacement. The Bresse system is usually considered in studying elastic structures of the arcs type (see \cite{JGJ}). It can be expressed by the equations of motion
	$$
	\begin{array}{rcl}
	\rho_1\varphi_{tt} & = & Q_x +\mathrm{l}N\\
	\rho_2 \psi_{tt} & = & M_x-Q \\
	\rho_1w_{tt} & = & N_x-\mathrm{l}Q
	\end{array}
	$$
	where
	$$
	\begin{array}{rcl}
	N & = & \displaystyle{k_3\left(w_x-\mathrm{l}\varphi\right)}\\
	Q & = & \displaystyle{k_1\left(\varphi_x+\psi+\mathrm{l}w\right)}\\
	M & = & \displaystyle{k_2\psi_{x}-\int_0^{+\infty} g\left(s\right)\psi_{x}\left(x,t-s\right)ds}
	\end{array}
	$$
	are the stress strain relations for elastic behavior.  Here $\rho_1=\rho A, \ \rho_2=\rho I, \ k_1=k'GA, \ k_3=EA, \ k_2=EI, \ \mathrm{l}=R^{-1}$ where $\rho$ is the density of the material, $E$ is the modulus of elasticity, $G$ is the shear modulus, $k'$ is the shear factor, $A$ is the cross-sectional area, $I$ is the second moment of area of the cross-section, and $R$ is the radius of curvature. $\varphi, \ \psi,$ and $w$ are the vertical, shear angle, and longitudinal displacements. The kernel $g$ represents the memory effect acting only on the shear displacement. We note that when $R\rightarrow\infty$, then $\mathrm{l}\rightarrow0$ and the Bresse model reduces to well-known Timoshenko beam equations. \\
	
	Hago et al.  in \cite{hugo-Sare-racke} showed that the Timoshenko system with history type damping is not exponentially stable under Cattaneo's law, while under Fourier's law,  an exponential stability can be only attained once the speeds are equal. Moreover, no decay rate has been discussed if the speeds are different. This result has been recently improved by  Fatori et al in \cite{Lucifatori}, where an exponential stability is obtained with  Cattaneo's law if and only if a new condition on the wave speed of propagation is verified. Otherwise,  an optimal  energy decay rate of  type $\frac{1}{\sqrt{t}}$ is obtained.  Santos et al. in \cite{santossoufyane} extend the results of  \cite{Lucifatori} and \cite{hugo-Sare-racke} to the Bresse system with only one infinite memory type damping and under mixed boundary conditions. They  first proved that the system is exponentially stable if and only if  the three waves propagate with the same speed. Moreover, when at least two waves propagate with the same speed, a polynomial energy decay rate was established but this case has only a mathematical sense. Therefore, these results are very interesting but not complete. Indeed, from one hand, in the important physical natural case when the three waves have distinct speeds, no decay is discussed by Santos et al. On the other hand, the used techniques in all previously cited papers can not be adapted to prove the lack of exponential stability and to establish a polynomial decay rate in the interesting and difficult case of fully Dirichlet boundary conditions. 
	
	The purpose of this paper is to study the Bresse system in the presence of history type and/or heat conduction given by Cattaneo's law acting in the shear angle displacement equation and under fully Dirichlet or mixed boundary conditions.  We limit our attention to the case of fully Dirichlet boundary conditions since our study can be easily adapted to the other mixed boundary conditions. Besides the mathematical case when all the speeds are equal, we treat   the interesting physical case  when the three waves all propagate with different speeds. In fact, from the physical interpretation, we remark that the speeds of propagation of the three waves given by $\frac{\rho_1}{k_1}$, $\frac{\rho_2}{k_2}$, and $\frac{\rho_1}{k_3}$ are  all distinct. Our study is divided into two main parts.  First, we study the stability of the elastic Bresse system with only one memory type damping under fully Dirichlet  boundary  conditions.  When the speeds of the waves are all equal, we ensure the exponential decay found in \cite{santossoufyane}. 
	On the contrary,  using a spectrum method we  prove the lack of uniform stability.  
	Moreover,  when only two waves propagate with the same speed, using a frequency domain approach combining with a multiplier  method, we establish the energy decay rate of type $\frac{1}{t}$. Finally, in the interesting physical case, when the whole three waves propagate with different speeds, we prove an energy decay rate of type $\frac{1}{\sqrt{t}}$(see Theorem \ref{TH4.2} and Theorem \ref{TH4.3}). In these cases,  we conjecture the optimality of the energy decay rate.  Next, we adapt the study  of the  stability of the elastic Bresse system \eqref{eqq1.1} to the thermo-elastic Bresse system \eqref{eqq1.1'}.

	
	Last but not least, in addition to the previously cited papers,  we rapidly  recall some  previous studies done on the Bresse system. The stability of the elastic Bresse system with different kind of damping has been studied    in  \cite{LR4}, \cite{wehbey},  \cite{FatoriBresse}, \cite{AlabauBresse},   \cite{NounWehbe}, \cite{RiveraBresse}, \cite{aissaguesmia} and \cite{Wehbenadine} .  Guesmia et al.  in \cite{aissaguesmia}  considered Bresse system with infinite memories acting in the three equations of the system. They established  asymptotic stability results under some conditions on the relaxation functions regardless the speeds of propagation. 
	Furthermore, thermal stabilization of the Bresse system has been studied in \cite{RiveraBresse}, \cite{LR4}, \cite{Wehbenadine}. In \cite{LR4}, Liu and Rao  considered the Bresse system with two thermal dissipation laws.
	The results of \cite{LR4} are  improved by Fatori and Rivera in  \cite{RiveraBresse}
	where they studied the stability of Bresse system
	with one distributed temperature dissipation law operating on the angle displacement equation.
	Recently, Najdi and Wehbe in \cite{Wehbenadine}  extended and improved the results of \cite{RiveraBresse} when the thermal dissipation is locally distributed. \\

	This paper is organized as follows: In Section \ref{se2}, we prove the well-posedness   of system
	\eqref{eqq1.1} with fully Dirichlet boundary conditions. In Section \ref{strong}, we prove the strong stability of the system in the lack of the compactness of the resolvent of the generator.
	In Section \ref{se3}, we prove the exponential stability of the system on condition that the waves propagate
	with equal speeds. In Section \ref{se4}, we prove that the Bresse system considered with fully Dirichlet boundary conditions is non-uniformly stable when the speeds of the propagation of the waves are different. More precisely, we consider the reduced Timchenko system with fully Dirichlet boundary conditions and prove that an infinite number of  eigenvalues approach the imaginary axis. In Section \ref{se5}, if the waves propagate with different speeds,
	we prove the polynomial stability of the system. Indeed, if only two of the waves propagate with equal speeds,
	we prove a faster polynomial decay rate. Finally, in Section \ref{se6}, we adapt the results to the thermo-elastic Bresse
	system \eqref{eqq1.1'}.
	
	\section{Elastic Bresse system with one memory type control}\label{noheat}
	
	In this section, we study the stability of Bresse system with only one infinite memory damping acting in the equation about the shear angle displacement under fully Dirichlet boundary conditions (our study can be easily adapted to other mixed boundary conditions). The system is governed by the following partial differential equations:
	\begin{equation}\label{eqq1.1}
	\left\{
	\begin{array}{lll}
	\displaystyle{
		\rho_1\varphi_{tt}-k_1 \left(\varphi_x+\psi+\mathrm{l} w\right)_x-\mathrm{l}k_3\left(w_x-\mathrm{l}\varphi\right)=0,}\\ \\
	\displaystyle{\rho_2 \psi_{tt}-k_2\psi_{xx}+k_1\left(\varphi_x+\psi+\mathrm{l}w\right)+\int_0^{+\infty} g\left(s\right)\psi_{xx}\left(x,t-s\right)ds=0,}\\ \\
	\displaystyle{\rho_1w_{tt}-k_3\left(w_x-\mathrm{l}\varphi\right)_x+\mathrm{l}k_1\left(\varphi_x+\psi+\mathrm{l}w\right)=0,}
	\end{array}
	\right.
	\end{equation}
	with the following boundary conditions:
	\begin{equation}\label{DDD}
	\varphi\left(0,t\right)=\varphi\left(L,t\right)=\psi\left(0,t\right)=\psi\left(L,t\right)=
	w\left(0,t\right)=w\left(L,t\right)=0,
	\end{equation}
	and the following  initial conditions:
	$$
	\begin{array}{lll}
	\varphi\left(\cdot,0\right)=\varphi_0\left(\cdot\right),\ \psi\left(\cdot,-t\right)=\psi_0\left(\cdot,t\right),\
	w\left(\cdot,0\right)=w_0\left(\cdot\right),\\
	\varphi_t\left(\cdot,0\right)=\varphi_1\left(\cdot\right),\
	\psi_t\left(\cdot,0\right)=\psi_1\left(\cdot\right),\
	w_t\left(\cdot,0\right)=w_1\left(\cdot\right). \\
	\end{array}
	$$
	\begin{rk}
		The mixed boundary conditions make the calculations easier because they do not introduce point-wise terms when we apply
		the multiplicative techniques. However, in the case of fully Dirichlet boundary conditions, the calculations are more complicated because the
		boundary terms does not vanish.
	\end{rk}
	\subsection{Well-posedness of the problem}\label{se2}
	
	In this part, using a semi-group approach, we establish well-posedness result for the system  \eqref{eqq1.1}-\eqref{DDD} under condition (H) imposed
	into the relaxation function. For this purpose, similar to \cite{Constantine} and \cite{Mauro},  we introduce the new variable
	\begin{equation}\label{eta}
	\eta\left(x,t,s\right):=\psi\left(x,t\right)-\psi\left(x,t-s\right),\quad\text{in} \ \left(0,L\right)\times\mathbb{R}_{+}\times\mathbb{R}_{+}.
	\end{equation}
	Then, system  \eqref{eqq1.1}-\eqref{DDD} becomes
	\begin{equation}\label{eqq2.1}
	\left\{
	\begin{array}{lll}
	\displaystyle{
		\rho_1\varphi_{tt}-k_1 \left(\varphi_x+\psi+\mathrm{l} w\right)_x-\mathrm{l} k_3\left(w_x-\mathrm{l}\varphi\right)=0,}\\ \\
	\displaystyle{\rho_2 \psi_{tt}-\left(k_2-\int_0^{+\infty}g\left(s\right)ds\right)\psi_{xx}+k_1\left(\varphi_x+\psi+\mathrm{l}w\right)-\int_0^{+\infty} g\left(s\right)\eta_{xx}ds=0,}\\ \\
	\displaystyle{\rho_1w_{tt}-k_3\left(w_x-\mathrm{l}\varphi\right)_x+\mathrm{l}k_1\left(\varphi_x+\psi+\mathrm{l}w\right)=0,}
	\\ \\
	\displaystyle{\eta_t+\eta_s-\psi_t=0,}
	\end{array}
	\right.
	\end{equation}
	with the boundary conditions
	\begin{equation}\label{eqq2.3}
	\begin{array}{lll}
	\varphi\left(0,\cdot\right)=\varphi\left(L,\cdot\right)=\psi\left(0,\cdot\right)=\psi\left(L,\cdot\right)=w\left(0,\cdot\right)
	=w\left(L,\cdot\right)=0\quad&\text{in }\mathbb{R}_{+},\\
	\eta\left(0,\cdot,\cdot\right)=\eta\left(L,\cdot,\cdot\right)=0\quad&\text{in }\mathbb{R}_{+}\times\mathbb{R}_{+},\\
	\eta\left(\cdot,\cdot,0\right)=0&\text{in} \left(0,L\right)\times\mathbb{R}_{+},
	\end{array}
	\end{equation}
	and initial conditions
	\begin{equation}\label{eqq2.2}
	\begin{array}{lll}
	\varphi\left(\cdot,0\right)=\varphi_0\left(\cdot\right),\ \psi\left(\cdot,-t\right)=\psi_0\left(\cdot,t\right),\
	
	w\left(\cdot,0\right)=w_0\left(\cdot\right),
	& \\
	\varphi_t\left(\cdot,0\right)=\varphi_1\left(\cdot\right),\
	\psi_t\left(\cdot,0\right)=\psi_1\left(\cdot\right),\
	w_t\left(\cdot,0\right)=w_1\left(\cdot\right),  & \\
	\eta^0\left(\cdot,s\right):=\eta\left(\cdot,0,s\right)=\psi_0\left(\cdot,0\right)-\psi_0\left(\cdot,s\right)&\text{in }\left(0,L\right),\ s\geq 0.
	\end{array}
	\end{equation}
	The energy of system \eqref{eqq2.1}-\eqref{eqq2.3} is given by
	\begin{equation}\label{energy}
	\begin{array}{ll}
	\displaystyle{E\left(t\right)}&= \displaystyle{\frac{1}{2}\bigg\{\int_0^L\left(\rho_1\left|\varphi\right|^{2}+\rho_2\left|\psi_t\right|^2+\rho_1\left|w_t\right|^{2}
		+k_1\left|\varphi_x+\psi^2+\mathrm{l} w\right|^{2}+\widetilde{k}_2\left|\psi_x\right|^{2}\right)dx}\\ \\ & \displaystyle{+k_3\int_0^L \left|w_x-\mathrm{l} \varphi\right|^{2}dx+\int_0^L\int_0^{+\infty}g\left(s\right)\left|\eta_x\right|^{2}dx ds}\bigg\}.
	\end{array}
	\end{equation}
	Then a straightforward computation gives
	\begin{equation}\label{derivative of energy}
	E'\left(t\right)=\frac{1}{2}\int_0^L\int_0^{+\infty}g'\left(s\right)\left|\eta_x\right|^2ds dx\leq 0.
	\end{equation}
	Thus, the system  \eqref{eqq2.1}-\eqref{eqq2.3}  is dissipative in the sense that its energy is non increasing with respect to the time t. Now, we define  the  energy space $\mathcal{H}$ by
	\begin{equation*}
	\mathcal{H}=\left(H_0^1\left(0,L\right)\right)^3\times\left(L^2\left(0,L\right)\right)^3\times L^2_g\left(\mathbb{R}_+,H^1_0\right)
	\end{equation*}
	where $L^2_g\left(\mathbb{R}_+,H^1_0\right)$ denotes the Hilbert space endowed with the inner product
	\begin{equation*}
	\left(\eta^1,\eta^2\right)_{g}=\int_0^{L} \int_0^{+\infty} g\left(s\right)\eta_x^1\left(x,s\right)\eta_x^2\left(x,s\right)dsdx.
	\end{equation*}
	The energy space $\mathcal{H}$ is endowed with the following norm
	\begin{equation}\label{norm1}
	\begin{array}{ll}
	\displaystyle{\|U\|_{\mathcal{H}}^2}&= \displaystyle{\|(v^1,v^2,v^3,v^4,v^5,v^6,v^7)\|_{\mathcal{H}}^{2}}\\
	&=\rho_1\left\|v^4\right\|^{2}+\rho_2\left\|v^5\right\|^2+\rho_1\left\|v^6\right\|^{2}
	+k_1\left\|v^1_x+v^2+\mathrm{l} v^3\right\|^{2}+\widetilde{k}_2\left\|v^2_x\right\|^{2} \\
	&+k_3\left\|v^3_x-\mathrm{l} v^1\right\|^{2}+\left\|v^7\right\|^{2}_{g}
	\end{array}
	\end{equation}
	where  $\|\cdot\|$ and $\|\cdot\|_{g}$ denote  the norms of $L^2\left(0,L\right)$ and $L^2_g\left(\mathbb{R}_+,H^1_0\right)$ respectively.\\
	Next, we define the linear operator $\mathcal{A}$ in $\mathcal{H}$ by
	\begin{equation*}
	\begin{array}{l}
	D\left(\mathcal{A}\right)=\bigg\{\ U\in\mathcal{H} \ |\  v^1,v^3\in H^2\left(0,L\right),\  v^4,v^5,v^6\in  H^1_0\left(0,L\right),\ v^7_s\in L^2_g\left(\mathbb{R}_+,H^1_0\right), \\ \hspace{3.5cm} \ v^2+\int_0^{+\infty}g\left(s\right)v^7ds\in H^2\left(0,L\right)\cap H^1_0\left(0,L\right),\ v^7\left(x,0\right)=0\bigg\}
	\end{array}
	\end{equation*}
	and
	\begin{equation}\label{generator}
	\mathcal{A}\left(\begin{array}{l}
	v^1\\ v^2\\ v^3\\ v^4\\ v^5\\ v^6\\ v^7
	\end{array}\right)=\left(\begin{array}{c}
	v^4\\ v^5\\ v^6\\
	\rho_1^{-1}\left(k_1 \left(v^1_x+v^2+\mathrm{l} v^3\right)_x+\mathrm{l} k_3\left(v^3_x-\mathrm{l} v^1\right)\right)\\
	\rho_2^{-1} \left(\widetilde{k}_2 v^2_{xx}-k_1\left(v^1_x+v^2+\mathrm{l} v^3\right)+\int_0^{+\infty} g\left(s\right)v^7_{xx}ds\right)\\
	\rho_1^{-1}\left(k_3\left(v^3_x-\mathrm{l} v^1\right)_x-\mathrm{l} k_1\left(v^1_x+v^2+\mathrm{l} v^3\right)\right)\\
	v^5-v^7_s
	\end{array}\right)
	\end{equation}
	for all $U=\left(v^1,v^2,v^3,v^4,v^5,v^6,v^7\right)^{\mathsf{T}}\in D\left(\mathcal{A}\right)$. If $U=\left(\varphi,\psi,w,\varphi_t,\psi_t,w_t,\eta\right)^{\mathsf{T}}$ is the state of \eqref{eqq2.1}-\eqref{eqq2.3}, then the Bresse beam system is transformed into a first order evolution
	equation on the Hilbert space $\mathcal{H}$:
	\begin{equation}\label{Cauchy}
	\left\{
	\begin{array}{c}
	U_t=\mathcal{A}U,\\
	U\left(0\right)=U^0
	\end{array}
	\right.
	\end{equation}
	where
	\begin{equation*}
	U^0\left(x\right)=\left(\varphi_0\left(x\right),\psi_0\left(x,0\right),w_0\left(x\right),
	\varphi_1\left(x\right),\psi_1\left(x\right),w_1\left(x\right),\eta^0\left(x,\cdot\right)\right)^{\mathsf{T}}.
	\end{equation*}
	\begin{rk}
		It is easy to see that  there exists a positive constant $k_0'$ such that
		\begin{equation}\label{eqq2.6}
		k_1\left\|\varphi_x+\psi+\mathrm{l}w\right\|^2+\widetilde{k}_2\left\|\psi_x\right\|^2+k_3\left\|w_x-\mathrm{l}\varphi\right\|^2
		\leq k_0'\left(\left\|\varphi_x\right\|^2+\left\|\psi_x\right\|^2+\left\|w_x\right\|^2\right).
		\end{equation}
		On the other hand, under hypothesis  {\rm (H)}, as  $\widetilde{k}_2>0$, we can show  by a contradiction argument the existence of a positive constant $k_0$  such that, for any $\left(\varphi,\psi,w\right)\in \left(H_0^1\left(\left]0,L\right[\right)\right)^3$,
		\begin{equation}\label{eqq2.5}
		\displaystyle{k_0\left(\left\|\varphi_x\right\|^2+\left\|\psi_x\right\|^2+\left\|w_x\right\|^2\right)\leq
			k_1\left\|\varphi_x+\psi+\mathrm{l}w\right\|^2+\widetilde{k}_2\left\|\psi_x\right\|^2+k_3\left\|w_x-\mathrm{l}\varphi\right\|^2}.
		\end{equation}
		Therefore, the norm on the energy space $\mathcal{H}$ given in \eqref{norm1} is equivalent to the usual norm on $\mathcal{H}$.
	\end{rk}
	\begin{prop}\label{TH2.1}
		Under hypothesis  {\rm (H)}, the operator $\mathcal{A}$ is m-dissipative in the energy space $\mathcal{H}$.
	\end{prop}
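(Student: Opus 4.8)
The plan is to apply the Lumer--Phillips theorem, which requires checking that $\mathcal{A}$ is dissipative and that $I-\mathcal{A}$ is onto $\mathcal{H}$. For dissipativity, take $U=(v^1,\dots,v^7)^{\mathsf{T}}\in D(\mathcal{A})$ and compute $\langle\mathcal{A}U,U\rangle_{\mathcal{H}}$ from \eqref{generator} together with the inner product attached to \eqref{norm1}, integrating by parts in $x$ in the three pairings that contain $v^4,v^5,v^6$. Since $v^4,v^5,v^6\in H_0^1(0,L)$ the boundary terms at $x=0,L$ disappear, and the remaining terms cancel in complex-conjugate pairs upon taking real parts, so that only the seventh component survives; an integration by parts in $s$ --- legitimate because $v^7(\cdot,0)=0$ and $g(s)\|v^7_x(\cdot,s)\|^2\to0$ as $s\to+\infty$ (a consequence of $U\in D(\mathcal{A})$ and of (H)) --- then gives
\[
\mathrm{Re}\,\langle\mathcal{A}U,U\rangle_{\mathcal{H}}=\frac{1}{2}\int_0^L\int_0^{+\infty}g'(s)\,|v^7_x(x,s)|^2\,ds\,dx\le0,
\]
which is nonpositive because $g$ is non-increasing, and which reproduces \eqref{derivative of energy}.

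For surjectivity of $I-\mathcal{A}$, fix $F=(f^1,\dots,f^7)^{\mathsf{T}}\in\mathcal{H}$ and look for $U\in D(\mathcal{A})$ with $U-\mathcal{A}U=F$. The first three scalar equations give $v^4=v^1-f^1$, $v^5=v^2-f^2$, $v^6=v^3-f^3$, and the seventh is the transport problem $\partial_s v^7+v^7=v^5+f^7$ with $v^7(\cdot,0)=0$, whose unique solution is $v^7(x,s)=(1-e^{-s})\,v^5(x)+\int_0^s e^{-(s-\sigma)}f^7(x,\sigma)\,d\sigma$; consequently $\int_0^{+\infty}g(s)\,v^7(\cdot,s)\,ds=\kappa\,v^5+\Phi$, where $\kappa:=\int_0^{+\infty}g(s)(1-e^{-s})\,ds\ge0$ and $\Phi\in H_0^1(0,L)$ depends linearly and continuously on $f^7$ (the bound on $\Phi_x$ follows from Fubini's theorem and the monotonicity of $g$). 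Substituting $v^4,v^5,v^6$ and this identity into the fourth, fifth and sixth equations reduces the problem to a coupled second-order elliptic system for $(v^1,v^2,v^3)\in(H_0^1(0,L))^3$. Testing it against $(\theta^1,\theta^2,\theta^3)\in(H_0^1(0,L))^3$ and integrating by parts produces a variational identity $a(V,\Theta)=L(\Theta)$ whose bilinear form reduces to
\[
a(V,V)=\rho_1\|v^1\|^2+\rho_2\|v^2\|^2+\rho_1\|v^3\|^2+k_1\|v^1_x+v^2+\mathrm{l}v^3\|^2+(\widetilde{k}_2+\kappa)\|v^2_x\|^2+k_3\|v^3_x-\mathrm{l}v^1\|^2;
\]
by \eqref{eqq2.5} (applied with $\widetilde{k}_2+\kappa\ge\widetilde{k}_2>0$) and Poincar\'e's inequality this form is coercive on $(H_0^1(0,L))^3$, it is plainly bounded, and $L$ is a bounded functional there (using the continuous dependence of $\Phi$ on $f^7$ and the embedding $H_0^1(0,L)\subset L^2(0,L)$). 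Lax--Milgram then gives a unique $(v^1,v^2,v^3)$, hence a solution $U$ of $U-\mathcal{A}U=F$.

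It remains to verify $U\in D(\mathcal{A})$: the right-hand sides of the first and third equations lie in $L^2(0,L)$, so elliptic regularity yields $v^1,v^3\in H^2(0,L)$ and $\widetilde{k}_2v^2+\int_0^{+\infty}g(s)v^7\,ds\in H^2(0,L)\cap H_0^1(0,L)$, while $v^7_s=v^5+f^7-v^7\in L^2_g(\mathbb{R}_+,H_0^1)$ and $v^7(\cdot,0)=0$ by construction. Hence $I-\mathcal{A}$ is onto $\mathcal{H}$, which together with the dissipativity is exactly the m-dissipativity of $\mathcal{A}$ (and, by Lumer--Phillips, makes $\mathcal{A}$ the generator of a $C_0$-semigroup of contractions on $\mathcal{H}$). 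The part I expect to require the most care is the bookkeeping around the memory variable --- solving the $s$-transport equation, re-expressing $\int_0^{+\infty}g\,v^7\,ds$ in terms of $v^5$ and $f^7$, and checking that every resulting quantity belongs to the correct space (in particular $v^7_s\in L^2_g(\mathbb{R}_+,H_0^1)$ and the $H^2$-regularity just mentioned); by contrast the dissipativity identity and the coercivity of $a$, both resting on \eqref{eqq2.5}, are essentially routine.
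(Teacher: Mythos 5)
Your proposal is correct, and it establishes m-dissipativity by a genuinely parallel but not identical route to the paper's. The dissipativity part coincides with the paper's computation \eqref{eqq2.8} (your justification of the vanishing boundary term at $s=+\infty$ is at the same level of detail as the paper's ``straightforward calculation''). For the range condition, however, you solve $(I-\mathcal{A})U=F$ directly, whereas the paper solves $-\mathcal{A}U=F$, i.e.\ it first proves $0\in\rho(\mathcal{A})$ and only then obtains $R(\lambda I-\mathcal{A})=\mathcal{H}$ for small $\lambda>0$ through the density of $D(\mathcal{A})$ and the resolvent identity (Theorem 1.2.4 in \cite{liu:99}). Working at $\lambda=1$ buys you two simplifications: the final abstract step is unnecessary, since dissipativity plus surjectivity of $I-\mathcal{A}$ is precisely m-dissipativity; and the memory component becomes $v^7=(1-e^{-s})v^5+\int_0^s e^{-(s-\sigma)}f^7\,d\sigma$, whose membership in $L^2_g\left(\mathbb{R}_+,H^1_0\right)$ follows from Fubini and the monotonicity of $g$ alone, whereas the paper's $v^7=-sf^2+\int_0^s f^7\,d\tau$ (forced by $\lambda=0$) requires the more delicate estimate \eqref{m10} exploiting the differential inequality $g'\le-cg$ of (H). The price is that your reduced elliptic system carries the zero-order terms $\rho_i v^i$ and the coefficient $\widetilde{k}_2+\kappa$, which equals the paper's $\underline{k}_2=k_2-\int_0^{+\infty}e^{-s}g\left(s\right)ds$; but these only reinforce coercivity, and your Lax--Milgram argument and regularity bookkeeping (including identifying $\widetilde{k}_2v^2+\int_0^{+\infty}g\left(s\right)v^7ds$ as the quantity with $H^2$ regularity, and $v^7_s=v^5+f^7-v^7\in L^2_g$) match what the paper does via ``classical regularity arguments''. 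In short, same strategy, different and slightly more economical choice of resolvent parameter; no gaps.
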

	\begin{proof}
		For all $U\in D\left(\mathcal{A}\right)$, by a straight forward calculation, we have
		\begin{equation}\label{eqq2.8}
		\Re\left(\left<\mathcal{A}U,U\right>_{\mathcal{H}}\right)=\frac{1}{2}
		\int_0^{L}\int_0^{+\infty}g'\left(s\right)\left| v^7_x\right|^2dsdx.
		\end{equation}
		As $g$ is non-increasing we get that  $\mathcal{A}$ is  dissipative. Now let  $F=\left(f^1,f^2,f^3,f^4,f^5,f^6,f^7\right)^\mathsf{T}\in\mathcal{H}$, we prove the existence of $$U=\left(v^1,v^2,v^3,v^4,v^5,v^6,v^7\right)^\mathsf{T}\in D\left(\mathcal{A}\right)$$  unique solution of the equation
		$$-\mathcal{A}U=F.$$
		Equivalently, we have the following system
		\begin{eqnarray}
		-v^4&=&f^1, \quad\label{eqq2.9}
		\\
		-v^5&=&f^2,\quad\label{eqq2.10}
		\\
		-v^6&=&f^3, \quad\label{eqq2.11}
		\\
		-k_1 \left[v^1_x+v^2+\mathrm{l} v^3\right]_x-\mathrm{l} k_3\left[v^3_x-\mathrm{l} v^1\right]
		&=&\rho_1 f^4,\quad\label{eqq2.12}
		\\
		-\widetilde{k}_2 v^2_{xx}+k_1\left[v^1_x+v^2+\mathrm{l} v^3\right]-\int_0^{+\infty} g\left(s\right)v^7_{xx}ds &=&\rho_2 f^5,\label{eqq2.13}
		\\
		-k_3\left[v^3_x-\mathrm{l} v^1\right]_x+\mathrm{l} k_1\left[v^1_x+v^2+\mathrm{l} v^3\right]&=&\rho_1 f^6 ,\quad\label{eqq2.14}
		\\
		v^7_{s}-v^5 &=& f^7.\quad\label{eqq2.15}
		\end{eqnarray}
		From \eqref{eqq2.15} and \eqref{eqq2.10}, we can
		determine
		%
		%
		%
		%
		%
		%
		%
		%
		\begin{equation}\label{eqq2.16}
		v^7\left(x,s\right)=-sf^2\left(x\right)+ \int_0^sf^7\left(x,\tau\right)d\tau.
		\end{equation}
		It is clear that $v^7\left(x,0\right)=0$ and $v^7_s\in  L^2_g\left(\mathbb{R}_+,H^1_0\right)$. To prove that $v^7\in  L^2_g\left(\mathbb{R}_+,H^1_0\right)$ let $T,\epsilon>0$ be arbitrary. Using  hypothesis  {\rm (H)}, we have 
		\begin{equation}\label{m10}
		\begin{array}{ll}
		\displaystyle{\int_{\epsilon}^{T} g\left(s\right)\left\|v^7_x\right\|^2 ds} \leq \displaystyle{-\frac{1}{c}\int_{\epsilon}^{T}g'\left(s\right)\left\|v^7_x\right\|^2 ds}\\ \\ \displaystyle{\leq -\frac{g\left(T\right)}{c} \left\|v^7_x\left(\cdot,T\right)\right\|^2 +\frac{g\left(\epsilon\right)}{c} \left\|v^7_x\left(\cdot,\epsilon\right)\right\|^2+ \frac{2}{c}\int_{\epsilon}^{T}g\left(s\right)\Re\left\{\intspace v^7_x\overline{v^7_{xs}}dx\right\}ds}.
		\end{array}
		\end{equation}
		Using the hypothesis on $g$ and the fact that $v^7\left(x,0\right)=0$, then
		as $T\to+\infty$ and $\epsilon\to 0,$ we obtain from \eqref{m10}
		\begin{equation*}
		\begin{array}{ll}
		\displaystyle{\int_{0}^{+\infty} g\left(s\right)\left\|v^7_x\right\|^2 ds}& \displaystyle{\leq \frac{2}{c}\int_{0}^{+\infty}g\left(s\right)\Re\left\{\intspace v^7_x\overline{v^7_{xs}}dx\right\}ds}\\ \\
		&\displaystyle{\leq\frac{1}{2}\int_{0}^{+\infty} g\left(s\right)\left\|v^7_x\right\|^2 ds+\frac{2}{c^2}\int_{0}^{+\infty} g\left(s\right)\left\|v^7_{sx}\right\|^2 ds}. 
		\end{array}
		\end{equation*}
		It follows that 
		$$\int_{0}^{+\infty} g\left(s\right)\left\|v^7_x\right\|^2 ds\leq\frac{4}{c^2}\int_{0}^{+\infty} g\left(s\right)\left\|v^7_{sx}\right\|^2 ds<+\infty.$$
		Therefore $v^7\in  L^2_g\left(\mathbb{R}_+,H^1_0\right)$.
		Now, inserting \eqref{eqq2.16} in \eqref{eqq2.12}-\eqref{eqq2.14}, we get
		\begin{equation}\label{eqq2.18}
		\left\{
		\begin{array}{ll}
		\displaystyle{-k_1 \left[v^1_x+v^2+\mathrm{l} v^3\right]_x-\mathrm{l} k_3\left[v^3_x-\mathrm{l} v^1\right]
			=\rho_1f^4,}
		\\ \\
		\displaystyle{-\underline{k}_2 v^2_{xx}+k_1\left[v^1_x+v^2+\mathrm{l} v^3\right]=\rho_2f^5+ \int_0^{+\infty}g\left(s\right)\left(-sf^2\left(x\right)+ \int_0^sf^7\left(x,\tau\right)d\tau\right)_{xx} ds},
		\\ \\
		\displaystyle{-k_3\left[v^3_x-\mathrm{l} v^1\right]_x+\mathrm{l} k_1\left[v^1_x+v^2+\mathrm{l}v^3\right]=\rho_1f^6 ,}
		\end{array}
		\right.
		\end{equation}
		where $ \underline{k}_2=k_2-\int_0^{+\infty}e^{-s}g\left(s\right)ds>0$. \\
		Using Lax-Milgram Theorem (see \cite{pazy}), we deduce that \eqref{eqq2.18} admits a unique solution in $\left( H^1_0\left(0,L\right)\right)^3$. Thus, using \eqref{eqq2.16} and classical regularity arguments, we conclude that $-\mathcal{A}U=F$ admits a unique solution $U\in D\left(\mathcal{A}\right)$ and $0\in \rho(\mathcal{A})$. Since $D(\mathcal{A})$ is dense in $\mathcal{H}$ then, by the resolvent identity, for small $\lambda > 0$, we have $R(\lambda I -\mathcal{A} ) = \mathcal{H}$ (see Theorem 1.2.4  in \cite{liu:99}) and $\mathcal{A}$ is m-dissipative in $\mathcal{H}$. The proof is thus complete.
	\end{proof}
	
	Thanks to  Lumer-Phillips Theorem (see \cite{liu:99,pazy}), we deduce that $\mathcal{A}$ generates a  $C_0$-semigroup of contraction $e^{t\mathcal{A}}$ in $\mathcal{H}$ and therefore  problem \eqref{eqq2.1}-\eqref{eqq2.3} is well-posed. Then we have the following result:
	\begin{thm}\label{TH2.2}
		Under hypothesis  {\rm (H)}, for any $U^0\in\mathcal{H}$, problem \eqref{Cauchy}  admits a unique weak solution
		$$U\in C\left(\mathbb{R}_{+};\mathcal{H}\right).$$
		Moreover, if $U^0\in D\left(\mathcal{A}\right),$ then
		$$U\in C\left(\mathbb{R}_{+};D\left(\mathcal{A}\right)\right) \cap C^1\left(\mathbb{R}_{+};\mathcal{H}\right).$$
	\end{thm}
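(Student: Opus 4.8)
The plan is simply to read the statement off the semigroup generation already recorded just above. By Proposition~\ref{TH2.1}, the operator $\mathcal A$ is m-dissipative on the Hilbert space $\mathcal H$, so the Lumer--Phillips theorem (see \cite{liu:99,pazy}) guarantees that $\mathcal A$ generates a $C_0$-semigroup of contractions $\left(e^{t\mathcal A}\right)_{t\geq 0}$ on $\mathcal H$. Once this is in hand, the two assertions of the theorem are exactly the classical existence/regularity dichotomy for the abstract Cauchy problem \eqref{Cauchy}, and the only thing left is to cite it and to identify the abstract solution with a solution of the history system \eqref{eqq2.1}--\eqref{eqq2.3}.

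First I would treat general initial data. For $U^0\in\mathcal H$ I set $U(t):=e^{t\mathcal A}U^0$ and invoke the standard theorem on mild solutions of $U_t=\mathcal A U$ (e.g.\ \cite{pazy}): $U$ is the unique weak (mild) solution of \eqref{Cauchy} and $t\mapsto U(t)$ is continuous from $\mathbb R_+$ into $\mathcal H$, i.e.\ $U\in C\left(\mathbb R_+;\mathcal H\right)$. Uniqueness can also be seen directly from contractivity: two solutions $U,\widetilde U$ with the same data satisfy $\|U(t)-\widetilde U(t)\|_{\mathcal H}\leq\|U^0-\widetilde U^0\|_{\mathcal H}=0$ for all $t\geq 0$.

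Next, for $U^0\in D(\mathcal A)$ I would use the regularity properties of the generator: $D(\mathcal A)$ is invariant under $\left(e^{t\mathcal A}\right)_{t\geq 0}$, the map $t\mapsto e^{t\mathcal A}U^0$ is continuously differentiable with $\frac{d}{dt}e^{t\mathcal A}U^0=\mathcal A e^{t\mathcal A}U^0$, and $t\mapsto\mathcal A e^{t\mathcal A}U^0$ is continuous. This yields $U\in C\left(\mathbb R_+;D(\mathcal A)\right)\cap C^1\left(\mathbb R_+;\mathcal H\right)$, so that $U$ is in fact a strong solution of \eqref{Cauchy}.

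There is no real obstacle here, since the substantive analysis was already carried out in Proposition~\ref{TH2.1}; the only point deserving a word is that the abstract solution of \eqref{Cauchy} genuinely corresponds to a solution of the original history system in the prescribed spaces. This is immediate from the definitions of $\mathcal A$ and $D(\mathcal A)$: writing $U=\left(\varphi,\psi,w,\varphi_t,\psi_t,w_t,\eta\right)^{\mathsf T}$ and unwinding \eqref{generator} componentwise recovers exactly the equations \eqref{eqq2.1}, the boundary conditions \eqref{eqq2.3} are encoded in the membership $U(t)\in D(\mathcal A)$ (respectively in the closure, for weak solutions), and $U(0)=U^0$ gives the initial data \eqref{eqq2.2}. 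Hence no argument beyond the cited semigroup results is needed, and the proof is complete.
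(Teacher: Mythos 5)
Your proposal is correct and follows exactly the paper's route: m-dissipativity from Proposition \ref{TH2.1}, then Lumer--Phillips to obtain the contraction semigroup $e^{t\mathcal{A}}$, and the standard mild/strong solution theory for the abstract Cauchy problem \eqref{Cauchy}. The paper gives no further argument beyond this, so nothing is missing.
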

	\subsection{Strong stability of the system}\label{strong}
	
	In this part, we use a general criteria of Arendt-Batty in \cite{arendt:88} to show the strong stability of the $C_0$-semigroup
	$e^{t\mathcal{A}}$ associated to the Bresse system \eqref{eqq2.1}-\eqref{eqq2.3} in the absence of the compactness of the resolvent of $\mathcal{A}$. 
	Our main result is the following theorem:
	\begin{thm}\label{strongstability}
		Assume that {\rm (H)} is true. Then, the $C_0$-semigroup $e^{t\mathcal{A}}$ is strongly stable in $\mathcal{H}$; i.e, for all $U^0\in\mathcal{H}$, the solution of \eqref{Cauchy} satisfies
		$$
		\lim_{t\to+\infty}\left\|e^{t\mathcal{A}} U^0\right\|_{\mathcal{H}}=0.
		$$
	\end{thm}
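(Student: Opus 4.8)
The plan is to apply the Arendt--Batty criterion: since $\mathcal{A}$ generates a bounded $C_0$-semigroup on $\mathcal H$ (Proposition \ref{TH2.1} and the Lumer--Phillips theorem), strong stability follows once we show that $\mathcal{A}$ has no eigenvalue on the imaginary axis, i.e. $\sigma_p(\mathcal{A})\cap i\mathbb R=\emptyset$, and that $i\mathbb R\subset\rho(\mathcal A)$ (equivalently, since the resolvent need not be compact here, that no approximate eigenvalue sits on $i\mathbb R$, handled via a contradiction argument producing a Weyl sequence). First I would note that $0\in\rho(\mathcal A)$ was already established in the proof of Proposition \ref{TH2.1}, so only $i\lambda$ with $\lambda\in\mathbb R^{*}$ needs treatment.

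For the point-spectrum part, suppose $\mathcal A U = i\lambda U$ with $\lambda\neq 0$ and $U=(v^1,\dots,v^7)^{\mathsf T}\in D(\mathcal A)$, $\|U\|_{\mathcal H}=1$. Taking the real part of the inner product and using \eqref{eqq2.8} gives
$$
0=\Re\left(\langle \mathcal A U,U\rangle_{\mathcal H}\right)=\frac12\int_0^L\int_0^{+\infty}g'(s)\,|v^7_x|^2\,ds\,dx .
$$
Since $g'\le -c\,g<0$ on the support of $g$, this forces $v^7_x\equiv 0$, hence $v^7\equiv 0$ in $L^2_g(\mathbb R_+,H^1_0)$ (it vanishes at $x=0$). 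The equation $v^5-v^7_s=i\lambda v^7$ then yields $v^5\equiv 0$, i.e. $\psi_t\equiv0$, and from $v^5=i\lambda v^2$ with $\lambda\neq0$ we get $v^2\equiv0$. Plugging $v^2=v^7\equiv0$ back into the remaining three ODEs \eqref{eqq2.12}--\eqref{eqq2.14} (with right-hand sides $-i\lambda$ times the corresponding components, and $v^4=i\lambda v^1$, $v^6=i\lambda v^3$), we are left with a homogeneous elliptic system in $(v^1,v^3)$:
$$
\begin{cases}
-k_1(v^1_x+\mathrm l v^3)_x-\mathrm l k_3(v^3_x-\mathrm l v^1)=\lambda^2\rho_1 v^1,\\[4pt]
-k_3(v^3_x-\mathrm l v^1)_x+\mathrm l k_1(v^1_x+\mathrm l v^3)=\lambda^2\rho_1 v^3,
\end{cases}
$$
with $v^1,v^3\in H^2\cap H^1_0$, together with the leftover constraint $k_1(v^1_x+\mathrm l v^3)=0$ coming from the (now trivial) second equation. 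I would substitute $v^1_x=-\mathrm l v^3$ into the first equation to deduce $v^3\equiv 0$, hence $v^1_x\equiv 0$ and $v^1\equiv0$; then $v^4=v^6\equiv0$ as well, contradicting $\|U\|_{\mathcal H}=1$. Thus $i\lambda$ is not an eigenvalue.

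For the resolvent part, since $\mathcal A$ does not have compact resolvent, I would argue by contradiction: if some $i\lambda\in i\mathbb R^{*}$ is not in $\rho(\mathcal A)$, then (because $\mathcal A$ is m-dissipative, so $i\mathbb R\cap\sigma(\mathcal A)$ consists of approximate eigenvalues) there exist $U_n\in D(\mathcal A)$ with $\|U_n\|_{\mathcal H}=1$ and $(i\lambda-\mathcal A)U_n=:F_n\to0$ in $\mathcal H$. Taking the real part of $\langle(i\lambda-\mathcal A)U_n,U_n\rangle$ and using \eqref{eqq2.8} gives $-\int_0^L\int_0^{+\infty}g'(s)|v^{7,n}_x|^2\,ds\,dx=\Re\langle F_n,U_n\rangle\to0$, whence, by hypothesis (H), $\int_0^L\int_0^{+\infty}g(s)|v^{7,n}_x|^2\le -\frac1c\int g'|v^{7,n}_x|^2\to0$, i.e. $\|v^{7,n}\|_g\to0$. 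One then runs through the equations componentwise: from the $\eta$-equation and $\|v^{7,n}\|_g\to0$ one extracts that $v^{5,n}=\psi^n_t\to0$ in $L^2$ (this is the delicate step, requiring a careful estimate of $v^{7,n}_s$ against $v^{7,n}$ using integration in $s$ and the exponential decay $g'\le -cg$, exactly as in the a priori bound \eqref{m10} used for well-posedness); then $v^{2,n}=\psi^n\to0$ in $H^1_0$ from $i\lambda v^{2,n}-v^{5,n}\to0$; and finally the coupled wave triple forces $v^{1,n},v^{3,n}\to0$ in $H^1_0$ and $v^{4,n},v^{6,n}\to0$ in $L^2$ by the same elliptic manipulation (substituting $v^1_x+\mathrm lv^3$ in terms of lower-order data and using the coercivity \eqref{eqq2.5}). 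This contradicts $\|U_n\|_{\mathcal H}=1$, so $i\mathbb R\subset\rho(\mathcal A)$, and Arendt--Batty yields the claim.

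The main obstacle I anticipate is the history component: unlike the finite-memory case, one cannot directly bound $v^7$ (or $v^7_s$) by the dissipation, so the passage from "$\int g'|v^7_x|^2$ small" to "$\psi_t$ small" must be done through the same weighted integration-in-$s$ argument that underlies \eqref{m10}, and this is where hypothesis (H)'s condition $g'\le -cg$ and the behavior of $g$ near $s=0$ are genuinely used. Everything downstream (the elliptic system for $(v^1,v^3)$, the use of \eqref{eqq2.5}) is routine once $\psi,\psi_t\to0$ is secured.
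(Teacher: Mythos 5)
Your overall framework is the same as the paper's (Arendt--Batty, reduced to ``no imaginary eigenvalues'' plus $i\mathbb{R}\subset\rho(\mathcal{A})$), and your kernel argument is essentially the paper's Lemma \ref{Th3.1}: dissipation kills $v^7$, then $v^5=v^2=0$ and $v^1_x+\mathrm{l}v^3=0$, then an ODE system in $(v^1,v^3)$. One caution there: substituting $v^1_x=-\mathrm{l}v^3$ into the $\varphi$-equation alone does not force $v^3\equiv0$, since the reduced ODE with only the Dirichlet data of $v^1$ admits nontrivial sinusoidal solutions for exceptional values of $\lambda$; you must also use the $w$-equation (as the paper does by combining the two reduced ODEs) or, equivalently, the Dirichlet conditions of $v^3$ transferred to $v^1_x$ through the constraint. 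Where you genuinely depart from the paper is the resolvent condition: the paper proves surjectivity of $i\lambda-\mathcal{A}$ directly (Lemma \ref{compactness}), eliminating $v^7$, reducing to the elliptic system \eqref{eqq3.23} and invoking Lax--Milgram, the compact embedding and the Fredholm alternative, with injectivity supplied by Lemma \ref{Th3.1}; you instead propose a Weyl-sequence contradiction, using that an imaginary spectral point of an m-dissipative generator is a boundary point of the spectrum and hence an approximate eigenvalue. That route is legitimate and leads to the same conclusion; the paper's route has the advantage of being a single linear solvability argument with no limiting procedure, while yours anticipates the frequency-domain estimates used later for the decay rates.

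As sketched, however, your Weyl-sequence argument has two concrete gaps. First, the deduction ``$v^{2,n}\to0$ in $H^1_0$ from $i\lambda v^{2,n}-v^{5,n}\to0$'' fails: $v^{5,n}$ is only controlled in $L^2$, so this identity gives $v^{2,n}\to0$ in $L^2$ only, whereas the energy norm contains $\widetilde{k}_2\|v^{2,n}_x\|^2$. You need a separate estimate of $\|v^{2,n}_x\|$, obtained for instance by multiplying the $\psi$-equation by $\overline{v^{2,n}}$, or the $\eta$-equation by $\overline{v^{2,n}}$ in $L^2_g\left(\mathbb{R}_+,H^1_0\right)$, exactly the computations the paper performs in Lemma \ref{LE3.2} (see \eqref{eqq3.49} and \eqref{eqq3.51}); with $v^{2,n}\to0$ in $L^2$ and $\|v^{7,n}\|_g\to0$ this does yield $\|v^{2,n}_x\|\to0$, so the gap is repairable. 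Second, the final step for the wave triple is not a routine substitution: from the $\psi$-equation you only get $v^{1,n}_x+\mathrm{l}v^{3,n}\to0$ in $H^{-1}(0,L)$, which is too weak to feed into the coercivity \eqref{eqq2.5}. What closes the argument at fixed $\lambda$ is a compactness step: $v^{1,n},v^{3,n}$ are bounded in $H^1_0$, hence (up to a subsequence) converge weakly in $H^1_0$ and strongly in $L^2$; the limit solves the eigenvalue system and so vanishes by Lemma \ref{Th3.1}; then multiplying the $\varphi$- and $w$-equations by $\overline{v^{1,n}}$ and $\overline{v^{3,n}}$ upgrades this to $\|v^{1,n}_x\|,\|v^{3,n}_x\|,\|v^{4,n}\|,\|v^{6,n}\|\to0$, giving the contradiction with $\|U_n\|_{\mathcal{H}}=1$. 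With these two repairs your proof is complete; note that both repairs use precisely the ingredients (the $L^2_g$ multiplier, the kernel lemma, and the compact embedding) that the paper organizes instead through Lemma \ref{compactness}.
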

	For the proof of Theorem  \ref{strongstability}, we need the following two lemmas.
	\begin{lem}\label{Th3.1}
		Under hypothesis {\rm (H)}, we have
		\begin{equation}\label{eqq3.4}
		\ker\left(\mathit{i}\lambda-\mathcal{A}\right)=\{0\}\,\,\,\,  {\rm for\, all } \,\, \lambda\in\mathbb{R}.
		\end{equation}
	\end{lem}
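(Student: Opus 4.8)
The plan is to peel the components of a purported eigenvector apart one at a time, using only the dissipation identity \eqref{eqq2.8} as quantitative input. The case $\lambda=0$ is free: $0\in\rho(\mathcal{A})$ by Proposition \ref{TH2.1}, so $\ker(\mathcal{A})=\{0\}$. Thus fix $\lambda\in\mathbb{R}\setminus\{0\}$ and suppose $U=(v^1,v^2,v^3,v^4,v^5,v^6,v^7)^{\mathsf{T}}\in D(\mathcal{A})$ solves $i\lambda U=\mathcal{A}U$. Taking real parts in $\langle\mathcal{A}U,U\rangle_{\mathcal{H}}=i\lambda\|U\|_{\mathcal{H}}^2$ and using \eqref{eqq2.8} gives $\int_0^L\int_0^{+\infty}g'(s)|v^7_x|^2\,ds\,dx=0$. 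Since hypothesis {\rm (H)} gives $g'(s)\le -cg(s)$ and $g\ge 0$, multiplying by $|v^7_x|^2$ and integrating yields $0=\int_0^L\int_0^{+\infty}g'(s)|v^7_x|^2\,ds\,dx\le -c\,\|v^7\|_g^2\le 0$, hence $v^7=0$ in $L^2_g(\mathbb{R}_+,H^1_0)$.

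Next I would read off the scalar equations contained in $i\lambda U=\mathcal{A}U$ from the definition \eqref{generator}: the first three lines give $v^4=i\lambda v^1$, $v^5=i\lambda v^2$, $v^6=i\lambda v^3$, and the last line is the transport equation $v^7_s+i\lambda v^7=v^5$, which together with the constraint $v^7(\cdot,0)=0$ built into $D(\mathcal{A})$ has the unique solution $v^7(x,s)=\frac{1-e^{-i\lambda s}}{i\lambda}\,v^5(x)$. Therefore $\|v^7\|_g^2=\|v^5_x\|^2\int_0^{+\infty}g(s)\,\frac{2-2\cos(\lambda s)}{\lambda^2}\,ds$, and since $g\not\equiv 0$ the integral is strictly positive, so $\|v^7\|_g=0$ forces $v^5_x=0$, i.e. $v^5=0$ (because $v^5\in H^1_0(0,L)$), and then $v^2=0$ because $\lambda\neq0$. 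Substituting $v^2=v^5=v^7=0$ into the fifth line of \eqref{generator} leaves $k_1(v^1_x+\mathrm{l}v^3)=0$, that is, $v^1_x=-\mathrm{l}v^3$.

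It then remains to eliminate $v^1$ and $v^3$. Using $v^1_x+v^2+\mathrm{l}v^3=0$, the fourth and sixth lines of $i\lambda U=\mathcal{A}U$ reduce to $\mathrm{l}k_3(v^3_x-\mathrm{l}v^1)=-\rho_1\lambda^2 v^1$ and $k_3(v^3_x-\mathrm{l}v^1)_x=-\rho_1\lambda^2 v^3$. Differentiating the first in $x$ and substituting $v^1_x=-\mathrm{l}v^3$ gives $k_3(v^3_x-\mathrm{l}v^1)_x=\rho_1\lambda^2 v^3$; comparing the two expressions for $k_3(v^3_x-\mathrm{l}v^1)_x$ forces $2\rho_1\lambda^2 v^3=0$, hence $v^3=0$, then $v^1_x=0$, so $v^1=0$ since $v^1\in H^1_0(0,L)$, and finally $v^4=i\lambda v^1=0$ and $v^6=i\lambda v^3=0$. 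Thus $U=0$, which proves \eqref{eqq3.4}.

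The only step I expect to require care is the deduction $v^7=0$ and $v^5=0$ from the single scalar identity $\int_0^L\int_0^{+\infty}g'(s)|v^7_x|^2\,ds\,dx=0$: one must exploit the differential inequality $g'\le -cg$ of {\rm (H)}, not just the monotonicity of $g$, since monotonicity alone would leave a set on which $g'$ vanishes while $g$ stays positive, and there the seminorm $\|\cdot\|_g$ still detects $v^7_x$. Once $v^7=0$ is obtained, the transport equation $v^7_s+i\lambda v^7=v^5$ with $v^7(\cdot,0)=0$ must be used to propagate the information back to $v^5$; the remaining elimination in the $(v^1,v^3)$ block is a routine constant-coefficient argument once $\rho_1>0$, $\lambda\neq0$, and the boundary conditions are in place.
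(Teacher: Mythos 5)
Your proof is correct and follows essentially the same path as the paper's: the dissipation identity \eqref{eqq2.8} together with $g'\le -cg$ forces $v^7=0$, hence $v^5=v^2=0$ and the constraint $v^1_x+\mathrm{l}v^3=0$, after which the remaining eigenvalue equations and the Dirichlet conditions force $v^1=v^3=0$. The only differences are cosmetic: you recover $v^5=0$ by solving the transport equation and computing $\left\|v^7\right\|_g^2=\left\|v^5_x\right\|^2\int_0^{+\infty}g\left(s\right)\frac{2-2\cos\left(\lambda s\right)}{\lambda^2}ds$ (the paper reads $v^5=0$ directly from the seventh component once $v^7=0$), and you eliminate $\left(v^1,v^3\right)$ by differentiating $\mathrm{l}k_3\left(v^3_x-\mathrm{l}v^1\right)=-\rho_1\lambda^2 v^1$ and comparing, which is, if anything, cleaner than the paper's reduction to two Dirichlet boundary value problems settled ``by direct calculation''.
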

	\begin{proof} From  Proposition \ref{TH2.1}, we deduce that $0\in\rho\left(\mathcal{A}\right)$.  We still need to show the result for $\lambda\in\mathbb{R^*}$.
		Suppose that  there exists a real number $\lambda\neq 0$  and
		$ U=\left(v^1,v^2,v^3,v^4,v^5,v^6,v^7\right)^{\mathsf{T}}\in D\left(\mathcal{A}\right)$ such that
		\begin{equation}\label{eqq3.5}
		\mathcal{A}U=\mathit{i}\lambda U.
		\end{equation}
		Then, we have
		\begin{equation*}
		\Re\left(\left<\mathcal{A}U,U\right>_{\mathcal{H}}\right)=\frac{1}{2}
		\int_0^{L}\int_0^{+\infty}g'\left(s\right)\left| v^7_x\right|^2dsdx=0.
		\end{equation*}
		Due to hypothesis {\rm (H)},  it follows that
		\begin{equation*}
		\int_0^{L}\int_0^{+\infty}g\left(s\right)\left| v^7_x\right|^2dsdx=0.
		\end{equation*}
		This implies that 
		%
		%
		%
		\begin{equation*}
		v^7=0.
		\end{equation*}
		Now equation \eqref{eqq3.5} is equivalent to
		\begin{eqnarray}
		v^4&=&\mathit{i}\lambda v^1, \label{eqq3.6}
		\\
		v^5&=&\mathit{i}\lambda v^2,\label{eqq3.7}
		\\
		v^6&=&\mathit{i}\lambda v^3,\label{eqq3.8}
		\\
		k_1 \left[v^1_x+v^2+\mathrm{l} v^3\right]_x+\mathrm{l} k_3\left[v^3_x-\mathrm{l} v^1\right]
		&=&\mathit{i}\rho_1\lambda v^4,\label{eqq3.9}
		\\
		\widetilde{k}_2 v^2_{xx}-k_1\left[v^1_x+v^2+\mathrm{l} v^3\right]&=&\mathit{i}\rho_2\lambda v^5,\label{eqq3.10}
		\\
		k_3\left[v^3_x-\mathrm{l}v^1\right]_x-\mathrm{l}k_1\left[v^1_x+v^2+\mathrm{l} v^3\right]&=&\mathit{i}\rho_1\lambda v^6,\label{eqq3.11}
		\\
		v^5&=&0.\label{eqq3.12}
		\end{eqnarray}
		Using equations \eqref{eqq3.12}, \eqref{eqq3.7}, \eqref{eqq3.10} and the fact that $v^2,\ v^5\in H^1_0\left(0,L\right)$, we get
		\begin{equation}\label{eqq3.13}
		v^2=v^5=0 \text{ and } v^1_x+\mathrm{l} v^3=0.
		\end{equation}
		Inserting \eqref{eqq3.6}, \eqref{eqq3.8} and \eqref{eqq3.13} into equations \eqref{eqq3.9} and \eqref{eqq3.11}, we get
		\begin{eqnarray*}
			-k_3 v^1_{xx}+\left(\rho_1\lambda^2-\mathrm{l}^2k_3\right) v^1&=&0,\\
			k_3v^3_{xx}+\left(\rho_1\lambda^2+\mathrm{l}^2k_3\right) v^3&=&0,\\
			v^1(0)=v^1(L)=v^3(0)=v^3(L)&=&0.
		\end{eqnarray*}
		By direct calculation, we deduce that $v^1=v^3=0$  and therefore $U=0$. The proof is thus complete.
	\end{proof}
	\begin{lem}\label{compactness}
		Under hypothesis  {\rm (H)}, $\mathit{i}\lambda  -\mathcal{A}$ is surjective for all $\lambda\in\mathbb{R}$.
	\end{lem}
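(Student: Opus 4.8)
The plan is to fix $F=(f^1,\dots,f^7)^{\mathsf{T}}\in\mathcal{H}$ and produce $U\in D(\mathcal{A})$ with $\left(\mathit{i}\lambda-\mathcal{A}\right)U=F$. Since $0\in\rho(\mathcal{A})$ by Proposition \ref{TH2.1}, one may assume $\lambda\neq0$. Reading the equation componentwise, the first three lines give $v^4=\mathit{i}\lambda v^1-f^1$, $v^5=\mathit{i}\lambda v^2-f^2$, $v^6=\mathit{i}\lambda v^3-f^3$, while the last line becomes a first-order linear ODE in $s$, namely $v^7_s+\mathit{i}\lambda v^7=v^5+f^7$ with $v^7(x,0)=0$, whose unique solution is
\[
v^7(x,s)=\frac{1-e^{-\mathit{i}\lambda s}}{\mathit{i}\lambda}\,v^5(x)+\int_0^s e^{-\mathit{i}\lambda(s-\tau)}f^7(x,\tau)\,d\tau .
\]
Because $g\in L^1(\mathbb{R}_+)$ by {\rm (H)} and (once found) $v^2\in H^1_0(0,L)$, the first summand lies in $L^2_g\left(\mathbb{R}_+,H^1_0\right)$; for the second summand I would argue exactly as in the proof of Proposition \ref{TH2.1} (multiply the ODE by $g(s)\overline{v^7}$, integrate by parts in $s$, and use $g'\le -cg$), so that $v^7,v^7_s\in L^2_g\left(\mathbb{R}_+,H^1_0\right)$.

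Inserting these expressions into the three remaining equations eliminates $v^4,v^5,v^6$ and $v^7$ and reduces the problem to the elliptic system for $\left(v^1,v^2,v^3\right)\in\left(H^1_0(0,L)\right)^3$,
\begin{equation*}
\left\{
\begin{array}{l}
-\lambda^2\rho_1 v^1-k_1\left(v^1_x+v^2+\mathrm{l} v^3\right)_x-\mathrm{l} k_3\left(v^3_x-\mathrm{l} v^1\right)=g^1,\\[3pt]
-\lambda^2\rho_2 v^2-\left(\widetilde{k}_2+\mathit{i}\lambda b_\lambda\right)v^2_{xx}+k_1\left(v^1_x+v^2+\mathrm{l} v^3\right)=g^2,\\[3pt]
-\lambda^2\rho_1 v^3-k_3\left(v^3_x-\mathrm{l} v^1\right)_x+\mathrm{l} k_1\left(v^1_x+v^2+\mathrm{l} v^3\right)=g^3,
\end{array}
\right.
\end{equation*}
where $b_\lambda=\int_0^{+\infty}g(s)\frac{1-e^{-\mathit{i}\lambda s}}{\mathit{i}\lambda}\,ds$ and $g^1,g^2,g^3\in H^{-1}(0,L)$ depend only on $F$. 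The key observation is that $\Re\left(\widetilde{k}_2+\mathit{i}\lambda b_\lambda\right)=\widetilde{k}_2+\int_0^{+\infty}g(s)\left(1-\cos\lambda s\right)ds\ge\widetilde{k}_2>0$, so the sesquilinear form attached to the principal part of this system (the part without the zeroth-order $-\lambda^2$ terms) is bounded and coercive on $\left(H^1_0(0,L)\right)^3$, coercivity being an immediate consequence of \eqref{eqq2.5}. Hence, by Lax--Milgram, that principal form is an isomorphism onto the dual space, and the full system can be rewritten as $\left(I-\lambda^2\mathcal{K}\right)\left(v^1,v^2,v^3\right)=\widetilde g$ with $\mathcal{K}$ compact on $\left(H^1_0(0,L)\right)^3$, thanks to the compact embedding $H^1_0(0,L)\hookrightarrow L^2(0,L)$.

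Finally, by the Fredholm alternative $I-\lambda^2\mathcal{K}$ is surjective if and only if it is injective. If $\left(v^1,v^2,v^3\right)$ belongs to its kernel, then reconstructing $v^4,v^5,v^6,v^7$ from the formulas above with $F=0$ yields $v^7=\left(1-e^{-\mathit{i}\lambda s}\right)v^2$, elliptic regularity gives $v^1,v^2,v^3\in H^2(0,L)$, and one checks that $U=\left(v^1,\dots,v^7\right)^{\mathsf{T}}\in D(\mathcal{A})$ with $\mathcal{A}U=\mathit{i}\lambda U$; thus $U\in\ker\left(\mathit{i}\lambda-\mathcal{A}\right)=\{0\}$ by Lemma \ref{Th3.1}, forcing $\left(v^1,v^2,v^3\right)=0$. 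Therefore $I-\lambda^2\mathcal{K}$ is invertible, the elliptic system is solvable, the reconstructed $U$ lies in $D(\mathcal{A})$, and $\left(\mathit{i}\lambda-\mathcal{A}\right)U=F$. I expect the main obstacle to be precisely this reduction to a Fredholm equation in the absence of compactness of the resolvent of $\mathcal{A}$: one has to eliminate the history component $v^7$ explicitly so that the Fredholm argument applies to the finite elliptic block $\left(v^1,v^2,v^3\right)$ only, and one must confirm that the memory term keeps the leading coefficient $\widetilde{k}_2+\mathit{i}\lambda b_\lambda$ uniformly sectorial so that Lax--Milgram can still be used.
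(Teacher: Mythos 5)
Your proposal is correct and follows essentially the same route as the paper: solve the history equation explicitly to eliminate $v^7$, substitute to obtain the reduced elliptic system in $\left(v^1,v^2,v^3\right)$ with the complex coefficient $\breve{k}_2=k_2-\int_0^{+\infty}e^{-\mathit{i}\lambda s}g\left(s\right)ds$ (your $\widetilde{k}_2+\mathit{i}\lambda b_\lambda$), invert the principal part by Lax--Milgram, use the compact embeddings to set up a Fredholm equation $\left(I-\lambda^2\mathcal{K}\right)v=\widetilde g$, and obtain injectivity by reconstructing an eigenvector of $\mathcal{A}$ and invoking Lemma \ref{Th3.1}. Your added remarks (the lower bound $\Re\breve{k}_2\ge\widetilde{k}_2>0$ and the verification that $v^7\in L^2_g\left(\mathbb{R}_+,H^1_0\right)$) only make explicit details the paper leaves to the reader.
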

	\begin{proof}
		Since $0\in\rho\left(\mathcal{A}\right)$.  We still need to show the result for $\lambda\in\mathbb{R^*}$. For any  $$F=\left(f^1,f^2,f^3,f^4,f^5,f^6,f^7\right)^{\mathsf{T}}\in\mathcal{H},\ \lambda\in \mathbb{R}^*,$$ we prove the existence of $$U=\left(v^1,v^2,v^3,v^4,v^5,v^6,v^7\right)^{\mathsf{T}}\in D\left(\mathcal{A}\right)$$ solution of the following equation
		$$\left(\mathit{i}\lambda -\mathcal{A}\right)U=F.$$
		Equivalently, we have the following system
		\begin{eqnarray}
		\mathit{i}\lambda v^1-v^4&=&f^1, \label{eqq3.14}
		\\
		\mathit{i}\lambda v^2-v^5&=&f^2,\label{eqq3.15}
		\\
		\mathit{i}\lambda v^3-v^6&=&f^3,\label{eqq3.16}
		\\
		\rho_1\mathit{i}\lambda v^4-k_1 \left[v^1_x+v^2+\mathrm{l} v^3\right]_x-\mathrm{l}k_3\left[v^3_x-\mathrm{l} v^1\right]
		&=&\rho_1 f^4,\label{eqq3.17}
		\\
		\rho_2\mathit{i}\lambda v^5 -\widetilde{k}_2 v^2_{xx}+k_1\left[v^1_x+v^2+\mathrm{l} v^3\right]-\int_0^{+\infty} g\left(s\right)v^7_{xx} ds&=&\rho_2 f^5,\label{eqq3.18}
		\\
		\rho_1\mathit{i}\lambda v^6-k_3\left[v^3_x-\mathrm{l} v^1\right]_x+\mathrm{l} k_1\left[v^1_x+v^2+\mathrm{l} v^3\right]&=&\rho_1 f^6 ,\label{eqq3.19}
		\\
		\mathit{i}\lambda v^7 +v^7_{s}-v^5 &=& f^7.\quad\label{eqq3.20}
		\end{eqnarray}
		From \eqref{eqq3.20} and \eqref{eqq3.15}, we have
		$$v^7_{s}+\mathit{i}\lambda v^7 =\mathit{i}\lambda  v^2-f^2+f^7.$$
		It follows that
		\begin{equation}\label{eqq3.21}
		v^7\left(x,s\right)=\left(1-e^{-\mathit{i}\lambda s}\right)v^2\left(x\right)+\frac{\mathit{i}}{\lambda}\left(1-e^{-\mathit{i}\lambda s}\right)f^2\left(x\right)+\int_0^s e^{\mathit{i}\lambda\left( \tau-s\right)}f^7\left(x,\tau\right)d\tau.
		\end{equation}
		From \eqref{eqq3.14}-\eqref{eqq3.16}, we have
		\begin{equation}\label{eqq3.22}
		v^4=\mathit{i}\lambda v^1-f^1,\quad v^5=\mathit{i}\lambda v^2-f^2,\quad v^6=\mathit{i}\lambda v^3-f^3.
		\end{equation}
		Inserting \eqref{eqq3.21} and \eqref{eqq3.22} in \eqref{eqq3.17}-\eqref{eqq3.19}, we get
		\begin{equation}\label{eqq3.23}
		\left\{
		\begin{array}{ll}
		\displaystyle{-\lambda^2 v^1-k_1 \rho_1^{-1}\left[v^1_x+v^2+\mathrm{l} v^3\right]_x-\mathrm{l} k_3\rho_1^{-1}\left[v^3_x-\mathrm{l}v^1\right]
			=h^1,}
		\\ \\
		\displaystyle{-\lambda^2 v^2-\breve{k}_2 \rho_2^{-1}v^2_{xx}+k_1\rho_2^{-1}\left[v^1_x+v^2+\mathrm{l} v^3\right]=h^2 ,}
		\\ \\
		\displaystyle{-\lambda^2 v^3-k_3\rho_1^{-1}\left[v^3_x-\mathrm{l} v^1\right]_x+\mathrm{l} k_1\rho_1^{-1}\left[v^1_x+v^2+\mathrm{l} v^3\right]=h^3 ,}
		\end{array}
		\right.
		\end{equation}
		where
		$$\breve{k}_2=k_2-\int_0^{+\infty} e^{-\mathit{i}\lambda s}g\left(s\right)ds,$$
		and
		\begin{equation*}
		\left\{
		\begin{array}{ll}
		\displaystyle{h^1=f^4+\mathit{i}\lambda f^1,\quad h^3=f^6+\mathit{i}\lambda f^3,}\\ \\
		\displaystyle{h^2=f^5+ \mathit{i}\lambda f^2+\frac{\mathit{i}}{\lambda}\rho_2^{-1}\int_0^{+\infty}\left(1-e^{-\mathit{i}\lambda s}\right)g\left(s\right)ds f^2_{xx}+\rho_2^{-1}\int_0^{+\infty}g\left(s\right)\int_0^s e^{\mathit{i}\lambda\left(\tau-s\right)}f^7_{xx}d\tau ds}.
		\end{array}
		\right.
		\end{equation*}
		Define the operators
		\begin{equation*}
		\mathcal{L}v=\left(\begin{array}{c}
		\displaystyle{-k_1 \rho_1^{-1}\left(v^1_x+v^2+\mathrm{l} v^3\right)_x-\mathrm{l} k_3 \rho_1^{-1}\left(v^3_x-\mathrm{l}v^1\right)
		}
		\\ \\
		\displaystyle{-\breve{k}_2 \rho_2^{-1}v^2_{xx}+k_1\rho_2^{-1}\left(v^1_x+v^2+\mathrm{l} v^3\right) }
		\\ \\
		\displaystyle{-k_3\rho_1^{-1}\left(v^3_x-\mathrm{l} v^1\right)_x+\mathrm{l} k_1\rho_1^{-1}\left(v^1_x+v^2+\mathrm{l} v^3\right)}
		\end{array}\right),  \,\,\, \forall\ v=\left(v^1,v^2,v^3\right)^{\mathsf{T}}\in \bigg(H^1_0(0,L)\bigg)^3.
		\end{equation*}
		Using Lax-Milgram theorem, it is easy to show that $\mathcal{L}$ is an isomorphism from $(H^1_0(0,L))^3$ onto $(H^{-1}\left(0,L\right))^3$. Let $v=\left(v^1,v^2,v^3\right)^{\mathsf{T}}$ and  $h=\left(h^1,h^2,h^3\right)^{\mathsf{T}}$, then we transform system \eqref{eqq3.23} into the following form
		\begin{equation}\label{eqq3.28}
		v-\lambda^2\mathcal{L}^{-1}v=\mathcal{L}^{-1}h.
		\end{equation}
		Using the compactness embeddings from $L^2(0,L)$ into $H^{-1}(0,L)$ and from $H^1_0(0,L)$ into $L^{2}(0,L)$ we deduce that the operator $\mathcal{L}^{-1}$
		is compact from $L^2(0,L)$ into $L^2(0,L)$. Consequently, by Fredholm alternative, proving the existence of $v$ solution of \eqref{eqq3.28} reduces to proving the injectivity of the operator $Id-\lambda^2\mathcal{L}^{-1}$. Indeed, if $\tilde{v}=\left(\tilde{v}^1,\tilde{v}^2,\tilde{v}^3\right)^{\mathsf{T}}\in{\rm Ker}(Id-\lambda^2\mathcal{L}^{-1})$, then we have $\lambda^2 \tilde{v}-  \mathcal{L} \tilde{v}=0$. It follows that
		\begin{equation}\label{eqq3.29}
		\left\{
		\begin{array}{ll}
		\displaystyle{-\rho_1\lambda^2 \tilde{v}^1-k_1 \left[\tilde{v}^1_x+\tilde{v}^2+\mathrm{l} \tilde{v}^3\right]_x-\mathrm{l} k_3\left[\tilde{v}^3_x-\mathrm{l} \tilde{v}^1\right]
			=0,}
		\\ \\
		\displaystyle{-\rho_2\lambda^2 \tilde{v}^2-\breve{k}_2 \tilde{v}^2_{xx}+k_1\left[\tilde{v}^1_x+\tilde{v}^2+\mathrm{l} \tilde{v}^3\right]=0,}
		\\ \\
		\displaystyle{-\rho_1\lambda^2 \tilde{v}^3-k_3\left[\tilde{v}^3_x-\mathrm{l} \tilde{v}^1\right]_x+I k_1\left[\tilde{v}^1_x+\tilde{v}^2+\mathrm{l} \tilde{v}^3\right]=0.}
		\end{array}
		\right.
		\end{equation}
		Now, it is easy to see that if $(\tilde{v}^1,\tilde{v}^2,\tilde{v}^3)$ is a solution of system \eqref{eqq3.29} then the vector $\tilde{V}$ defined by  $\tilde{V}= \left(\tilde{v}^1,\tilde{v}^2,\tilde{v}^3,\mathit{i}\lambda \tilde{v}^1,\mathit{i}\lambda \tilde{v}^2,\mathit{i}\lambda \tilde{v}^3,\left(1-e^{-\mathit{i}\lambda s}\right) \tilde{v}^2\right)^{\mathsf{T}}$ belongs to $D(\mathcal{A})$ and we have $i\lambda \tilde{V}-\mathcal{A}\tilde{V}=0.$
		Therefore, by Lemma \ref{Th3.1}, we get $\tilde{V}=0$ and so ${\rm Ker}(Id-\lambda^2\mathcal{L}^{-1})=\{0\}$. Thanks to Fredholm alternative, the equation  \eqref{eqq3.28} admits a unique solution $v=(v^1,v^2,v^3) \in \left(H^1_0\left(0,L\right)\right)^3$. Thus, using \eqref{eqq3.21}, \eqref{eqq3.22} and a classical regularity arguments, we conclude that $\left(\mathit{i}\lambda -\mathcal{A}\right)U=F$ admits a unique solution $U\in D\left(\mathcal{A}\right)$. The proof is thus complete.
	\end{proof}
	
	\medskip
	
	\noindent{\bf Proof of Theorem \ref{strongstability}.} Following a general criteria of Arendt-Batty in \cite{arendt:88}, the $C_0-$semigroup $e^{t\mathcal{A}}$ of contractions is strongly  stable if $\mathcal{A}$ has no pure imaginary eigenvalues and $\sigma(\mathcal{A})\cap i\mathbb{R}$ is countable. By Lemma \ref{Th3.1}, the operator $\mathcal{A}$ has no pure imaginary eigenvalues and by Lemma \ref{compactness}, ${\rm R}(\mathit{i}\lambda  -\mathcal{A})=\mathcal{H}$  for all $\lambda\in\mathbb{R}$. Therefore, the closed graph theorem of Banach  implies that $\sigma(\mathcal{A})\cap i\mathbb{R}=\emptyset$. The proof is thus complete.
	\begin{rk}
		Using a multiplier method,  Santos et {\it al.} in \cite{santossoufyane} established the strong stability of Bresse system with only one infinite memory damping. Their study is only available  for one dimensional case. In Theorem \ref{strongstability}, our approach can be generalized to multi-dimensional spaces. In addition, our condition (H) on the relaxation function $g$ is weaker than that used in \cite{santossoufyane}.  
	\end{rk}
	\begin{rk}
		We mention \cite{RaoWehbe:Kirchoff} for a direct approach of the strong stability of
		Kirchhoff plates in the absence of compactness of the resolvent.
	\end{rk}
	\subsection{Exponential stability in the case $\frac{k_1}{\rho_1}=\frac{k_2}{\rho_2}$ and $k_1=k_3$}\label{se3}
	In this part, under hypothesis {\rm (H)}, we prove the exponential stability of the Bresse system \eqref{eqq1.1}-\eqref{DDD} provided that
	\begin{equation}\label{eqq3.1}
	\frac{k_1}{\rho_1}=\frac{k_2}{\rho_2}\,\,\, \text{ and }\,\,\, k_1=k_3.
	\end{equation}
	Our main result in this part is the following stability estimate:
	\begin{thm}\label{TH3.5}
		Assume that   \eqref{eqq3.1} is satisfied. Under hypothesis {\rm (H)},  the $C_0$-semigroup $e^{t\mathcal{A}}$ is exponentially stable, i.e, there exist constants $M\geq1,$ and $\epsilon>0$ independent of $U^0$ such that
		\begin{equation*}
		\left\|e^{t\mathcal{A}}U^0\right\|_{\mathcal{H}}\leq M e^{-\epsilon t}\left\|U^0\right\|_{\mathcal{H}}, \quad t\geq0.
		\end{equation*}
	\end{thm}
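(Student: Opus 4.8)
The plan is to apply the frequency-domain characterization of exponential stability for $C_0$-semigroups of contractions (the Gearhart--Huang--Prüss theorem, see e.g.\ \cite{liu:99}): since $\mathcal{A}$ generates a contraction semigroup and, by Section~\ref{strong}, $i\mathbb{R}\subset\rho(\mathcal{A})$, it suffices to prove the resolvent bound
\begin{equation}\label{eqq-res-bound}
\sup_{\lambda\in\mathbb{R}}\left\|\left(i\lambda-\mathcal{A}\right)^{-1}\right\|_{\mathcal{L}(\mathcal{H})}<+\infty.
\end{equation}
I would argue by contradiction: if \eqref{eqq-res-bound} fails, there exist sequences $\lambda_n\in\mathbb{R}$ with $|\lambda_n|\to+\infty$ and $U_n=(v^1_n,\dots,v^7_n)^{\mathsf T}\in D(\mathcal{A})$ with $\|U_n\|_{\mathcal{H}}=1$ such that $\|(i\lambda_n-\mathcal{A})U_n\|_{\mathcal{H}}=:\|F_n\|_{\mathcal{H}}\to0$. (Boundedness of $\lambda_n$ is excluded by $i\mathbb{R}\subset\rho(\mathcal{A})$ together with continuity of the resolvent.) Writing out the resolvent equation componentwise as in \eqref{eqq3.14}--\eqref{eqq3.20}, the first task is to extract the dissipation: taking the real part of $\langle(i\lambda_n-\mathcal{A})U_n,U_n\rangle_{\mathcal{H}}$ and using \eqref{eqq2.8} together with hypothesis~(H) (namely $g'\le -cg$) gives
\begin{equation*}
\int_0^L\int_0^{+\infty}g(s)\left|(v^7_n)_x\right|^2\,ds\,dx=-\frac{1}{c}\int_0^L\int_0^{+\infty}g'(s)\left|(v^7_n)_x\right|^2\,ds\,dx\lesssim \left|\Re\langle F_n,U_n\rangle\right|\to0,
\end{equation*}
so $\|v^7_n\|_g\to0$; this is the only damping term and everything else must be recovered from it.

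The core of the proof is a sequence of multiplier estimates showing that each remaining component of $U_n$ tends to $0$ in the appropriate norm, contradicting $\|U_n\|_{\mathcal{H}}=1$. I would proceed roughly in this order. First, from \eqref{eqq3.20} and the smallness of $v^7_n$ one recovers control of $v^5_n=\psi_{t,n}$: indeed $v^7_n\to0$ in $L^2_g(\mathbb{R}_+,H^1_0)$ forces (after testing \eqref{eqq3.20} against suitable weights in $s$, using $v^7_n(\cdot,0)=0$ and integrating by parts in $s$) that $\|(v^5_n)_x\|\to0$, hence $\|(v^2_n)_x\|\to0$ via \eqref{eqq3.15}. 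Next, the memory-integral term $\int_0^{+\infty}g(s)(v^7_n)_{xx}\,ds$ appearing in the $\psi$-equation \eqref{eqq3.18} is then shown to be negligible in $H^{-1}$. This reduces \eqref{eqq3.17}--\eqref{eqq3.19} to (an asymptotic version of) the elastic Bresse resolvent system with a "damping" concentrated in $\psi$. At this stage the equal-speed hypothesis \eqref{eqq3.1}, i.e.\ $\frac{k_1}{\rho_1}=\frac{k_2}{\rho_2}$ and $k_1=k_3$, is exploited: forming the combination of \eqref{eqq3.17} and \eqref{eqq3.18} that the equal-speed condition makes into a transport-type identity (the classical trick that the two second-order operators $\frac{k_1}{\rho_1}\partial_x^2$ and $\frac{k_2}{\rho_2}\partial_x^2$ coincide, so that the coupling term $k_1(\varphi_x+\psi+\mathrm{l}w)$ can be moved between equations without producing a speed mismatch), one obtains an estimate of the shear coupling quantity $\|(v^1_n)_x+v^2_n+\mathrm{l}v^3_n\|$. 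Similarly $k_1=k_3$ is what lets the $\varphi$- and $w$-equations \eqref{eqq3.17}, \eqref{eqq3.19} be treated symmetrically, yielding control of $\|(v^3_n)_x-\mathrm{l}v^1_n\|$, of $\|v^4_n\|,\|v^6_n\|$ via \eqref{eqq3.14}, \eqref{eqq3.16}, and finally of $\|(v^1_n)_x\|,\|(v^3_n)_x\|$ using the coercivity estimate \eqref{eqq2.5}. Summing these gives $\|U_n\|_{\mathcal{H}}\to0$, the desired contradiction.

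The main obstacle — and the reason the fully Dirichlet case is genuinely harder, as flagged in the Introduction and the Remark after \eqref{eqq2.2} — is that the multiplier computations produce non-vanishing boundary terms at $x=0,L$. Under mixed boundary conditions one chooses multipliers (typically $q(x)\partial_x$ with $q(0)=q(L)=0$, or $x\partial_x$) that kill these pointwise contributions, but with $\varphi,\psi,w$ all Dirichlet one must instead estimate boundary traces of the form $\partial_x\varphi(0),\partial_x\psi(0),\partial_x w(0)$. The standard device is to introduce a cutoff multiplier $m(x)$ with $m(0)=-m(L)$ (say $m(x)=x-L/2$, or two localized bumps near the endpoints) and absorb the resulting boundary traces by a separate interior estimate: one proves that the traces are controlled by $\|F_n\|_{\mathcal{H}}$ plus a small multiple of $\|U_n\|_{\mathcal{H}}$, which can then be absorbed. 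Making this absorption work while keeping all constants uniform in $\lambda_n$ is the delicate point; it will require carefully ordering the estimates so that each newly-controlled quantity is available when estimating the next, and using the equal-speed relations \eqref{eqq3.1} precisely at the step where a speed mismatch would otherwise obstruct absorbing a $\lambda_n$-weighted term. Once the boundary terms are handled, the remainder is the by-now-classical energy-multiplier bookkeeping for Bresse-type systems.
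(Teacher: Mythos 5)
Your overall framework is the same as the paper's: the Huang--Pr\"uss criterion, a contradiction sequence $\lambda_n\to\infty$, $\|U_n\|_{\mathcal{H}}=1$, $(i\lambda_n-\mathcal{A})U_n\to0$, dissipation first giving $\|v^7_n\|_g\to0$, and then multiplier estimates in which \eqref{eqq3.1} cancels the speed-mismatch term. However, there are two genuine gaps. First, your route to controlling the $\psi$-component passes through the claim $\|(v^5_n)_x\|\to0$ (equivalently $\|\lambda_n (v^2_n)_x\|\to0$ up to $f^2_x$). The memory dissipation only yields $\|v^7_n\|_g=o(1)$, and since $v^7_n\approx(1-e^{-i\lambda_n s})v^2_n$, this is of the same order as $\|(v^2_n)_x\|$; it cannot produce an extra factor of $\lambda_n$. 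What is actually provable, and what the paper proves in Lemma \ref{LE3.2} by testing \eqref{eqq3.40} against $\overline{v^2}$ in $L^2_g(\mathbb{R}_+,H^1_0)$ and then \eqref{eqq3.38} against $\overline{v^2}$, is $\|(v^2_n)_x\|=o(1)$ and $\|\lambda_n v^2_n\|=o(1)$; these suffice, so you should replace the unjustified stronger claim by this direct argument.

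Second, and more seriously, the fully Dirichlet difficulty is named but not resolved. You propose a multiplier of the type $m(x)=x-L/2$ and an ``absorption'' of the boundary traces $\varphi_x(0),\psi_x(0),w_x(0)$, and you yourself flag that making this uniform in $\lambda_n$ is the delicate point --- but no mechanism for controlling those traces is given, and none is known here; this is exactly where the approaches of the cited works fail for fully Dirichlet conditions. The paper's resolution is different: all the multipliers in the key estimates (Lemmas \ref{LE3.4} and \ref{LE3.5}, where \eqref{eqq3.1} and $k_1=k_3$ enter through the combinations of \eqref{eqq3.37}, \eqref{eqq3.38}, \eqref{eqq3.39} with $\varsigma\overline{v^1_x}$, $\varsigma\overline{v^3_x}$ and $\tfrac{\varsigma}{k_1}\bigl(\widetilde{k}_2\overline{v^2_x}+\int_0^{+\infty}g\,\overline{v^7_x}\,ds\bigr)$) are localized by a cut-off $\varsigma$ vanishing near $x=0,L$, so no boundary terms ever appear; this yields $\|U_n\|_{\mathcal{H}}=o(1)$ only on an interior subinterval, and a separate propagation result (Lemma \ref{LE3.3}, proved with multipliers of the form $\phi\overline{v^1_x}$, $\phi\overline{v^3_x}$ with $\phi=x\varsigma_1$ vanishing at both endpoints) transports this smallness to all of $(0,L)$, giving the contradiction. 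Without either this localization-plus-propagation step or an actual proof of your trace-absorption estimate, your argument does not close.
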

	According to \cite{Huang:85} and \cite{Pruss}, we have to check if the following conditions hold,
	\begin{equation*}
	\mathit{i}\mathbb{R}\subseteq\rho\left(\mathcal{A}\right) \,\,\,\,\,\,\,\,  {\rm (H1)},
	\end{equation*}
	and
	\begin{equation*}
	\sup_{\lambda\in\mathbb{R}}\left\|\left(i\lambda Id-\mathcal{A}\right)^{-1}\right\|_{\mathcal{L}\left(\mathcal{H}\right)}=O\left(1\right)
	\,\,\,\,\,\,\,\,  {\rm (H2)}.
	\end{equation*}
	Condition  $\mathit{i}\mathbb{R}\subseteq\rho\left(\mathcal{A}\right)$ is  already proved in Lemma \ref{Th3.1} and Lemma \ref{compactness}. We will prove condition (H2) by a contradiction argument. Suppose that there exist a sequence of real numbers $\left(\lambda_n\right)_n$, with  $|\lambda_n|\to+\infty,$ and a sequence of vectors
	\begin{equation}\label{eqq3.32}
	U_n=\left(v^1_n,v^2_n,v_n^3,v^4_n,v^5_n,v^6_n,v^7_n\right)^{\mathsf{T}}\in D\left(\mathcal{A}\right) \ \text{ with }\ \|U_n\|_{\mathcal{H}}=1
	\end{equation}
	such that
	\begin{equation}\label{eqq3.33}
	\mathit{i}\lambda_n U_n-\mathcal{A}U_n=\left(f^1_n,f^2_n,f_n^3,f^4_n,f^5_n,f^6_n,f^7_n\right)^{\mathsf{T}}\to 0\ \text{ in } \mathcal{H}.
	\end{equation}
	That we detail as
	\begin{eqnarray}
	\mathit{i}\lambda_n v^1_n-v^4_n&=&h^1_n,\label{eqq3.34}
	\\
	\mathit{i}\lambda_n v^2_n-v^5_n&=&h^2_n,\label{eqq3.35}
	\\
	\mathit{i}\lambda_n v^3_n-v^6_n&=&h^3_n,\label{eqq3.36}
	\\
	\rho_1\lambda_n^2 v^1_n+k_1 \left[\left(v^1_n\right)_x+v^2_n+\mathrm{l} v^3_n\right]_x+\mathrm{l} k_3\left[\left(v^3_n\right)_x-\mathrm{l} v^1_n\right]
	&=& h^4_n,\label{eqq3.37}
	\\
	\rho_2\lambda_n^2 v^2_n    +\widetilde{k}_2 \left(v^2_n\right)_{xx}-k_1\left[\left(v^1_n\right)_x+v^2_n+\mathrm{l} v^3_n\right]+\int_0^{+\infty} g\left(s\right)\left(v^7_n\right)_{xx} ds&=& h^5_n,\label{eqq3.38}
	\\
	\rho_1\lambda_n^2 v^3_n+k_3\left[\left(v^3_n\right)_x-\mathrm{l} v^1_n\right]_x-\mathrm{l} k_1\left[\left(v^1_n\right)_x+v^2_n+\mathrm{l} v^3_n\right]&=& h^6_n,\label{eqq3.39}
	\\
	\mathit{i}\lambda_n v^7_n +\left(v^7_n\right)_{s}-\mathit{i}\lambda_n v^2_n &=&h^7_n,\label{eqq3.40}
	\end{eqnarray}
	where
	\begin{equation*}
	\left\{
	\begin{array}{ll}
	h^1_n=f^1_n,\ h^2_n=f^2_n,\ h^3_n=f^3_n,\ h^7_n=f^7_n-f^2_n, \\ \\
	h^4_n=-\rho_1\left( f^4_n+\mathit{i}\lambda_nf^1_n\right),\ h^5_n=-\rho_2\left(  f^5_n+\mathit{i}\lambda_nf^2_n\right),\
	h^6_n=-\rho_1 \left( f^6_n+\mathit{i}\lambda_nf^3_n\right). 
	\end{array}
	\right.
	\end{equation*}
	In the following we will check the condition (H2) by finding a contradiction
	with \eqref{eqq3.32} such as $\left\| U_n\right\|_\mathcal{H} =o(1)$. For clarity, we divide the proof into several lemmas.
	From  now on, for simplicity, we drop the index $n$.

	\begin{lem}\label{LE3.1} Assume that hypothesis {\rm (H)} is verified. Then  we have
		\begin{equation}\label{eqq3.45}
		\intspace\inttime g\left(s\right)\left|v^7_x\right|^2ds dx=o\left(1\right).
		\end{equation}
	\end{lem}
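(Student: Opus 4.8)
The plan is to read off the dissipation estimate directly from the resolvent identity \eqref{eqq3.33}. Taking the $\mathcal{H}$-inner product of \eqref{eqq3.33} with $U$, using $\|U\|_{\mathcal{H}}=1$ together with the fact that the right-hand side of \eqref{eqq3.33} tends to zero in $\mathcal{H}$, I obtain
$$
\Re\left(\mathit{i}\lambda\|U\|_{\mathcal{H}}^2-\left<\mathcal{A}U,U\right>_{\mathcal{H}}\right)=\Re\left<\mathit{i}\lambda U-\mathcal{A}U,\,U\right>_{\mathcal{H}}=o(1).
$$
Since $\mathit{i}\lambda\|U\|_{\mathcal{H}}^2$ is purely imaginary, and since by \eqref{eqq2.8} one has $\Re\left<\mathcal{A}U,U\right>_{\mathcal{H}}=\frac{1}{2}\intspace\inttime g'(s)\left|v^7_x\right|^2\,ds\,dx$, this reduces to
$$
\intspace\inttime\big(-g'(s)\big)\left|v^7_x\right|^2\,ds\,dx=o(1).
$$

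Next I would invoke hypothesis {\rm (H)}. Since $g'(s)\le -c\,g(s)$ with $c>0$, we have $-g'(s)\ge c\,g(s)\ge0$ pointwise, hence
$$
c\intspace\inttime g(s)\left|v^7_x\right|^2\,ds\,dx\le\intspace\inttime\big(-g'(s)\big)\left|v^7_x\right|^2\,ds\,dx=o(1),
$$
and dividing by $c>0$ yields \eqref{eqq3.45}.

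There is essentially no genuine obstacle in this lemma; it is the routine opening move of the frequency-domain contradiction argument. The only point worth emphasizing is that hypothesis {\rm (H)} is precisely what converts the $(-g')$-weighted dissipation coming from \eqref{eqq2.8} into the $g$-weighted bound needed to control the memory component $\|v^7\|_g^2$ of $\|U\|_{\mathcal{H}}^2$. The substantive work — estimating the remaining components $v^1,\dots,v^6$, exploiting the equal-speed hypothesis \eqref{eqq3.1} and multiplier identities, and eventually contradicting $\|U_n\|_{\mathcal{H}}=1$ — is deferred to the subsequent lemmas.
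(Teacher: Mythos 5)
Your proposal is correct and follows essentially the same argument as the paper: take the inner product of \eqref{eqq3.33} with $U$, use \eqref{eqq2.8} and the boundedness of $U$ to get that the $(-g')$-weighted integral is $o(1)$, then apply $g'(s)\le -c\,g(s)$ from hypothesis {\rm (H)} to pass to the $g$-weighted integral. Nothing more is needed.
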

	\begin{proof}
		Taking the inner product of \eqref{eqq3.33} with $U$ in $\mathcal{H}$. Then, using \eqref{eqq2.8} and the fact that $U$ is uniformly bounded in $\mathcal{H}$, we get
		\begin{equation}\label{eqq3.46}
		\frac{1}{2}\int_0^{L}\int_0^{+\infty}g'\left(s\right)\left| v^7_x\right|^2dsdx=\Re\left(\mathcal{A}U,U\right)_{\mathcal{H}}=-\Re\left(\mathit{i}\lambda U-\mathcal{A}U,U\right)_{\mathcal{H}}=o\left(1\right).
		\end{equation}
		Using condition (H)  into \eqref{eqq3.46}, 
		we obtain the desired asymptotic equation \eqref{eqq3.45}. Thus the proof is complete.
	\end{proof}
	\begin{lem}\label{LE3.2} Assume that hypothesis {\rm (H)} is verified. Then we have
		\begin{equation}\label{eqq3.47}
		\intspace\left|v^2_x\right|dx={o\left(1\right)}\,\,\,\, {\rm and }\,\,\,\, \intspace\left|\lambda  v^2\right|^2dx=o\left(1\right).
		\end{equation}
	\end{lem}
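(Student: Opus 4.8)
The plan is to extract both conclusions from the dissipation bound of Lemma~\ref{LE3.1} combined with equations \eqref{eqq3.38} and \eqref{eqq3.40}. First I would solve \eqref{eqq3.40}: for each fixed $x$ it is a linear first order ODE in $s$ with $v^7(x,0)=0$, hence
\[
v^7(x,s)=\left(1-e^{-i\lambda s}\right)v^2(x)+\int_0^s e^{i\lambda(\tau-s)}h^7(x,\tau)\,d\tau=:\left(1-e^{-i\lambda s}\right)v^2(x)+R(x,s).
\]
Since $g$ is continuous and positive with $g_0>0$, there is a fixed interval $[a,b]\subset(0,+\infty)$ with $g(s)\geq g_\ast:=g(b)>0$ on $[a,b]$, and by monotonicity $g(s)\geq g_\ast$ on all of $[0,b]$. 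Restricting Lemma~\ref{LE3.1} to $[a,b]$ gives $\|v^7_x\|_{L^2((0,L)\times(a,b))}^2\leq g_\ast^{-1}\int_0^L\int_0^{+\infty}g|v^7_x|^2\,ds\,dx=o(1)$, while for $s\in[a,b]$ the source term obeys $\|R_x(\cdot,s)\|\leq\int_0^b\|h^7_x(\cdot,\tau)\|\,d\tau\leq(b/g_\ast)^{1/2}\|h^7\|_{L^2_g(\mathbb{R}_+,H^1_0)}=o(1)$, so $\|R_x\|_{L^2((0,L)\times(a,b))}=o(1)$ as well (recall $h^7=f^7-f^2\to0$ in $L^2_g(\mathbb{R}_+,H^1_0)$). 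Subtracting, $\big(\int_a^b|1-e^{-i\lambda s}|^2\,ds\big)\,\|v^2_x\|_{L^2(0,L)}^2=o(1)$, and since $\int_a^b|1-e^{-i\lambda s}|^2\,ds=2(b-a)-2\lambda^{-1}(\sin\lambda b-\sin\lambda a)\geq b-a$ for $|\lambda|$ large, this forces $\|v^2_x\|_{L^2(0,L)}^2=o(1)$; in particular $\int_0^L|v^2_x|\,dx\leq\sqrt{L}\,\|v^2_x\|_{L^2(0,L)}=o(1)$, the first claim.

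For the second claim I would multiply \eqref{eqq3.38} by $\overline{v^2}$, integrate over $(0,L)$ and integrate by parts (the boundary terms vanish because $v^2(0)=v^2(L)=0$), obtaining
\[
\rho_2\|\lambda v^2\|^2=\widetilde{k}_2\|v^2_x\|^2+k_1\int_0^L(v^1_x+v^2+\mathrm{l}v^3)\overline{v^2}\,dx+\int_0^{+\infty}g(s)\int_0^L v^7_x\,\overline{v^2_x}\,dx\,ds+\int_0^L h^5\overline{v^2}\,dx.
\]
Each term on the right is $o(1)$: the first by the previous step; the second by Cauchy--Schwarz, since $\|v^1_x+v^2+\mathrm{l}v^3\|$ is bounded and $\|v^2\|\leq C\|v^2_x\|=o(1)$ by Poincaré; the third by Cauchy--Schwarz in $L^2_g(\mathbb{R}_+,H^1_0)$, which bounds it by $\|v^7\|_{L^2_g}\sqrt{g_0}\,\|v^2_x\|=o(1)$. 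For the last term write $h^5=-\rho_2(f^5+i\lambda f^2)$: then $|\int_0^L f^5\overline{v^2}\,dx|\leq\|f^5\|\|v^2\|=o(1)$, while $|\lambda\int_0^L f^2\overline{v^2}\,dx|=|\int_0^L f^2\,\overline{\lambda v^2}\,dx|\leq\|f^2\|\,\|\lambda v^2\|=o(1)$, because by \eqref{eqq3.35} one has $\|\lambda v^2\|=\|v^5+h^2\|$ bounded while $\|f^2\|\to0$. Hence $\rho_2\|\lambda v^2\|^2=o(1)$, giving the second claim. (Note that this lemma uses only hypothesis (H), not the equal-speed assumption \eqref{eqq3.1}.)

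The crux is the first step: converting the weighted dissipation estimate of Lemma~\ref{LE3.1} into genuine smallness of $v^2_x$. This requires the explicit representation of $v^7$ together with a uniform estimate showing the inhomogeneous contribution $R$ is negligible on a fixed $s$-window on which $g$ is bounded below; here the monotonicity built into hypothesis (H) is exactly what makes the weight $g$ controllable on $[0,b]$. Once $\|v^2_x\|_{L^2}=o(1)$ is in hand, the $\lambda v^2$ bound is a routine multiplier computation, whose only delicate point is that $\lambda f^2$ need not be small — this is circumvented by pairing it with $v^2$ and invoking the a priori boundedness of $\lambda v^2$ from \eqref{eqq3.35}.
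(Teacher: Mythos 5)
Your proof is correct, but for the crucial first estimate it follows a genuinely different route from the paper. The paper tests the transport equation \eqref{eqq3.40} against $\overline{v^2}$ in the weighted space $L^2_g\left(\mathbb{R}_+,H^1_0\right)$, uses the identity $\left\|v^2\right\|_g^2=g_0\left\|v^2_x\right\|^2$, and then integrates by parts in $s$, controlling the resulting term through $-g'$ and the existence of $\displaystyle{\lim_{s\to 0}\sqrt{g\left(s\right)}}$; this keeps everything at the level of weighted inner products but leans on the fine structure of hypothesis {\rm (H)} near $s=0$. You instead solve \eqref{eqq3.40} explicitly in $s$ (the same representation the paper itself uses in \eqref{eqq3.21} and \eqref{1.6}), restrict to a fixed window $\left[a,b\right]$ on which monotonicity gives $g\geq g\left(b\right)>0$, and convert the dissipation estimate of Lemma \ref{LE3.1} plus smallness of the inhomogeneous part $R$ into $\left(\int_a^b\left|1-e^{-\mathit{i}\lambda s}\right|^2ds\right)\left\|v^2_x\right\|^2=o\left(1\right)$, with the prefactor bounded below for large $\left|\lambda\right|$. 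This avoids the integration by parts in $s$ and the $\lim_{s\to0}\sqrt{g}$ device entirely, at the (harmless) price of assuming $g$ is not trivial so that $g\left(b\right)>0$ for some $b$ — an assumption the paper also makes implicitly when it divides by $g^0$. Note also that you actually prove the stronger estimate $\left\|v^2_x\right\|^2=o\left(1\right)$, which is what the paper uses later (the exponent appears to be a typo in the statement), and your second estimate is essentially the paper's multiplier computation with \eqref{eqq3.38}, including the correct treatment of the $\mathit{i}\lambda f^2$ term by pairing $\lambda$ with $v^2$ and using the a priori bound $\left\|\lambda v^2\right\|=O\left(1\right)$ from \eqref{eqq3.35}.
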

	\begin{proof}
		Multiplying \eqref{eqq3.40} by $\overline{v^2}$ in $L^2_g\left(\mathbb{R}_+,H_0^1\right)$. Then, using the fact that  $\left\| v^2 \right\|_g^2=g_0\left\| v^2_x \right\|^2$, $v^2_x$ is uniformly bounded in $L^2(0,L)$, $f^2$ converges to zero in $H^1_0(0,L)$ and $f^7$ converges to zero in $L^2_g\left(\mathbb{R}_+,H_0^1\right)$, we get
		\begin{equation}\label{eqq3.49}
		g^0\intspace\left|v^2_x\right|^2dx=\intspace\inttime g\left(s\right)v^7_x\overline{v_x^2}ds dx+\frac{1}{\mathit{i}\lambda}\intspace\inttime g\left(s\right) v^7_{xs}\overline{v^2_x} ds dx+
		\frac{o(1)}{\lambda}.
		\end{equation}
		Using by parts integration, condition {\rm (H)} and  the fact that $v^7\left(x,0\right)=0$, we get
		\begin{equation*}
		\frac{1}{\mathit{i}\lambda}\intspace\inttime g\left(s\right) v^7_{xs}\overline{v^2_x} ds dx=-\frac{1}{\mathit{i}\lambda}\int_0^{L} \int_0^{+\infty}
		g'\left(s\right)v^7_{x}\overline{{v}^2_x}dsdx.
		\end{equation*}
		Then, applying Holder's inequality in $L^2(0,L)$ and $L^2(0,+\infty)$ and using \eqref{eqq3.46} and the fact that $v^2_x$ is uniformly bounded in $L^2(0,L)$ and  $\displaystyle{\lim_{s\to0}\sqrt{g\left(s\right)}}$ exists, it follows that
		\begin{equation}\label{eqq3.48}
		\left|\frac{1}{\lambda}\intspace\inttime g\left(s\right) v^7_{xs}\overline{v^2_x} ds dx\right|\leq
		\frac{\displaystyle{\lim_{s\to0}\sqrt{g\left(s\right)}}}{\left|\lambda\right|}
		\left(\int_0^{L}\int_0^{+\infty}-g'\left(s\right)\left|v^7_x\right|^2dsdx\right)^{1/2} \left\|v^2_x\right\|=\frac{o(1)}{\lambda}.
		\end{equation}
		Using  Lemma \ref{LE3.1} and the fact that  $v^2_x$ is uniformly bounded in $L^2(0,L)$, we get 
		\begin{equation}\label{w1c}
		\left|\intspace\inttime g\left(s\right)v^7_x\overline{v_x^2}ds dx\right|=o\left(1\right).
		\end{equation}
		Using equation \eqref{eqq3.48} and \eqref{w1c} in equation \eqref{eqq3.49}, we get the first asymptotic estimate of \eqref{eqq3.47}. Now, multiplying \eqref{eqq3.38} by $\overline{v^2}$ in $L^2(0,L)$. Then, using the fact that  $v^2$ is uniformly bounded in $L^2(0,L)$, $f^2$ converges to zero in $H^1_0(0,L)$ and $f^5$ converges to zero in $L^2\left(0,L\right)$, we get 
		\begin{equation}\label{eqq3.51}
		\begin{array}{ll}
		\rho_2\lambda^2\intspace \left|v^2\right|^2dx=\widetilde{k}_2\intspace\left|v^2_x\right|^2dx+ \intspace\inttime g\left(s\right) v^7\overline{v^2}ds dx\\ \\ \hspace{2.6cm}+k_1\intspace \left( v^1_x+v^2+\mathrm{l} v^3\right)\overline{v^2}dx+o\left(1\right).
		\end{array}
		\end{equation}
		Using Lemma \ref{LE3.1}, the first estimation of \eqref{eqq3.47} and the fact that $ v^1_x$ is uniformly bounded in $L^2(0,L)$,
		$\left\| v^2 \right\|=o(1)$,   $v^2, v^3$  converge to zero in $L^2\left(0,L\right)$ in equation \eqref{eqq3.51} we obtain the second asymptotic estimate of \eqref{eqq3.47}. Thus the proof is complete.
	\end{proof}
	
	\begin{lem}\label{LE3.3}
		Assume that hypothesis {\rm (H)} is verified.
		If $\|U\|_{\mathcal{H}}=o\left(1\right)$,  on  $(\alpha,\beta)\subset(0,L)$ then $\|U\|_{\mathcal{H}}=o\left(1\right)$  on $(0,L).$
	\end{lem}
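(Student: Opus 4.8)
The plan is to show that the asymptotic smallness of $\|U\|_{\mathcal H}$ propagates from a subinterval $(\alpha,\beta)$ to the whole of $(0,L)$ by a unique continuation / ODE-comparison argument on the spectral equations \eqref{eqq3.34}--\eqref{eqq3.40}. First I would use Lemma \ref{LE3.1} and Lemma \ref{LE3.2}: they already give $\int_0^L\int_0^{+\infty}g(s)|v^7_x|^2\,ds\,dx=o(1)$, $\|v^2_x\|=o(1)$ and $\|\lambda v^2\|=o(1)$, so the memory component $v^7$ and the shear-angle component $v^2$ (together with $v^5=i\lambda v^2-h^2$) are already $o(1)$ in the relevant norms globally, with no localization needed. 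Hence it remains to propagate smallness of the four quantities controlling the $\varphi$- and $w$-blocks, namely $\lambda v^1$, $\lambda v^3$, $v^1_x+\mathrm l v^3$ (equivalently $v^1_x$), and $v^3_x-\mathrm l v^1$ (equivalently $v^3_x$), from $(\alpha,\beta)$ to $(0,L)$.

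Next I would substitute \eqref{eqq3.34}, \eqref{eqq3.36} into \eqref{eqq3.37} and \eqref{eqq3.39} to eliminate $v^4,v^6$, obtaining, after moving the now-negligible terms ($\psi$-coupling $v^2$, the datum $h^i$) to the right-hand side, a $2\times2$ system of second-order ODEs in $(v^1,v^3)$ of the schematic form
\begin{equation*}
\begin{cases}
\rho_1\lambda^2 v^1+k_1 v^1_{xx}+(\text{lower order in }v^1,v^3)=R^1,\\
\rho_1\lambda^2 v^3+k_3 v^3_{xx}+(\text{lower order in }v^1,v^3)=R^3,
\end{cases}
\end{equation*}
where $\|R^1\|_{L^2(0,L)}+\|R^3\|_{L^2(0,L)}=o(1)$ because all the right-hand side contributions involve $v^2$, $v^2_x$, $\int g(s)v^7_{xx}\,ds$ (handled by integration by parts against test functions, using Lemma \ref{LE3.1}) or the data $h^i\to0$. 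Because the two principal parts have different coefficients $k_1\neq k_3$ in general but each equation is a second-order ODE with bounded-in-$x$ (indeed constant) coefficients, the idea is to treat $(v^1,v^1_x,v^3,v^3_x)$ as the state of a first-order linear ODE system $Y'=M(\lambda)Y+G$ with $\|G\|_{L^2}=o(1)$ and $\|Y(x_0)\|=o(1)$ for some $x_0\in(\alpha,\beta)$ (this last from the hypothesis $\|U\|_{\mathcal H}=o(1)$ on $(\alpha,\beta)$, after choosing $x_0$ where the trace is controlled by the mean). Then Gronwall's inequality applied on $[x_0,L]$ and on $[0,x_0]$ gives $\|Y\|_{L^\infty(0,L)}\le C(\lambda)\big(\|Y(x_0)\|+\|G\|_{L^1}\big)$; the subtlety is that $\|M(\lambda)\|$ grows like $|\lambda|$, so a crude Gronwall estimate loses powers of $\lambda$. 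The fix is to diagonalize each scalar equation: write $v^1=e^{i\mu_1 x}a_+(x)+e^{-i\mu_1 x}a_-(x)$ with $\mu_1=\lambda\sqrt{\rho_1/k_1}$ (variation of parameters), so that $a_\pm' = O(1/\lambda)\cdot(\text{source})$, yielding $\lambda$-uniform control of $a_\pm$, hence of $v^1$ and $\lambda v^1$ and $v^1_x$, on all of $(0,L)$ from their values on $(\alpha,\beta)$; likewise for $v^3$ with $\mu_3=\lambda\sqrt{\rho_1/k_3}$.

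The main obstacle I anticipate is exactly this $\lambda$-uniformity: one must carry out the variation-of-parameters / Gronwall step so that no positive power of $\lambda$ is lost when transporting the estimate across $(0,L)$, and one must be careful that the coupling terms between the $v^1$- and $v^3$-equations (the $\mathrm l k_3 v^1$, $\mathrm l k_1 v^3$, $\mathrm l k_1 v^1$ terms) are genuinely lower order — they are $O(1)$ coefficients times $v^1$ or $v^3$ themselves, so they can be absorbed by a Gronwall argument on the combined amplitude vector $(a^1_\pm,a^3_\pm)$ rather than treating the two equations separately. A secondary technical point is justifying the choice of the base point $x_0\in(\alpha,\beta)$: since $\|U\|_{\mathcal H}=o(1)$ on $(\alpha,\beta)$ controls $\int_\alpha^\beta(\cdots)$, by the mean value theorem there is $x_0\in(\alpha,\beta)$ with $|v^1(x_0)|^2+|v^1_x(x_0)|^2+\cdots=o(1)$, and local elliptic regularity of the ODE system upgrades this pointwise bound. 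Once $\|Y\|_{L^\infty(0,L)}=o(1)$ is established, combining with the already-global estimates on $v^2,v^5,v^7$ from Lemmas \ref{LE3.1}--\ref{LE3.2} and with $v^4=i\lambda v^1-h^1$, $v^6=i\lambda v^3-h^3$ gives $\|U\|_{\mathcal H}=o(1)$ on $(0,L)$, completing the proof.
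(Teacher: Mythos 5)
Your route is genuinely different from the paper's: the paper proves this lemma by a multiplier identity rather than by ODE propagation. It multiplies \eqref{eqq3.37} by $2\phi\overline{v^1_x}$ and \eqref{eqq3.39} by $2\phi\overline{v^3_x}$, adds the two identities so that the coupling terms $\pm2\mathrm{l}(k_1+k_3)\Re\int\phi v^3_x\overline{v^1_x}\,dx$ cancel, obtaining \eqref{eqq3.54}, and then chooses $\phi=x\varsigma_1$ with a cut-off $\varsigma_1$ supported in $[0,\alpha+\epsilon]$ so that the hypothesis on $(\alpha,\beta)$ kills the contribution of $\phi'$ there and \eqref{eqq3.55} delivers the smallness of $\lambda v^1,\lambda v^3,v^1_x,v^3_x$ on $(0,\alpha)$, then symmetrically on $(\beta,L)$. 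Your plan (variation of parameters on the $v^1$- and $v^3$-equations, Gronwall on the joint amplitude vector, base point $x_0\in(\alpha,\beta)$ chosen by a mean-value argument) is a legitimate alternative in principle, and your handling of the $\mathrm{l}$-coupling terms and of $v^2$, $v^7$ via Lemmas \ref{LE3.1}--\ref{LE3.2} is sound.

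However, there is a concrete gap in the key step. You claim $\|R^1\|_{L^2(0,L)}+\|R^3\|_{L^2(0,L)}=o(1)$ "because \dots the data $h^i\to0$", but the data in \eqref{eqq3.37} and \eqref{eqq3.39} are $h^4=-\rho_1(f^4+\mathit{i}\lambda f^1)$ and $h^6=-\rho_1(f^6+\mathit{i}\lambda f^3)$; since $f^1,f^3$ converge to zero in $H^1_0(0,L)$ with no rate in $\lambda$, the terms $\mathit{i}\lambda f^1$, $\mathit{i}\lambda f^3$ are only $o(\lambda)$ in $L^2$, not $o(1)$. With that size, your Duhamel/Gronwall step (which gains exactly one factor $1/\lambda$ from dividing by the Wronskian) propagates only $a_\pm=o(1)$, i.e.\ smallness of $v^1,v^3$ themselves, whereas the $\mathcal{H}$-norm requires smallness of $\lambda v^1$, $v^1_x$, $\lambda v^3$, $v^3_x$, i.e.\ $a_\pm=o(1/\lambda)$; so as written the conclusion $\|U\|_{\mathcal H}=o(1)$ on $(0,L)$ does not follow. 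The missing idea is to exploit the oscillation of the kernel: integrating the Duhamel term $\int e^{\mp\mathit{i}\mu_1 y}\lambda f^1(y)\,dy$ by parts (using $f^1\to0$ in $H^1_0$) gains the needed extra $1/\lambda$, after which your scheme closes. This is precisely the role played in the paper's computation \eqref{eqq3.42p}, where the $\mathit{i}\lambda f^1$ contribution is integrated by parts so that $\lambda$ is paired with the uniformly bounded quantity $\lambda\overline{v^1}$ and the derivatives fall on $f^1$ and the test function; without this (or an equivalent device) your estimate of the source terms fails.
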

	\begin{proof} From Lemma \ref{LE3.2}, we have $\|v^2_x\|=o\left(1\right)$ and $\|\lambda v^2\|=o\left(1\right)$. Therefore, we only need to prove the same results for $v^1$ and $v^3$. Let $\phi\in H^1_0\left(0,L\right)$ be a given function. We proceed the proof in two steps.
		\begin{enumerate}
			\item Multiplying equation \eqref{eqq3.37} by $2\phi  \overline{v^1_x}$ in $L^2(0,L)$ and use Dirichlet boundary conditions to get
			\begin{equation}
			\begin{array}{c}
			\displaystyle{-\rho_1\int_0^{L}\phi'\left|\lambda v^1\right|^2dx
				-k_1\int_0^{L}\phi'\left| v^1_x\right|^2dx}
			\\  \\
			\displaystyle{+2\Re\left\{
				k_1\int_0^{L}\phi v^2_x\overline{v^1_x}dx+\mathrm{l}\left(k_1+k_3\right) \int_0^{L}\phi v^3_x\overline{v^1_x}dx-\mathrm{l}^2k_3\int_0^{L}\phi v^1\overline{v^1_x}dx\right\}}\\ \\
			=\displaystyle{-2\rho_1 \Re\left\{\int_0^{L}\phi f^4\overline{v^1_x}dx-\mathit{i}\int_0^{L} \left( \phi'f^1+ \phi f^1_x\right)\lambda 
				\overline{ v^1}dx\right\}}. \label{eqq3.42p}
			\end{array}
			\end{equation}
			From \eqref{eqq3.32} and  \eqref{eqq3.34}-\eqref{eqq3.36}, we remark that
			\begin{equation}\label{eqq3.42}
			\left\|v^1\right\|=O\left(\frac{1}{\lambda}\right),\ \left\|v^2\right\|=O\left(\frac{1}{\lambda}\right),\ \left\|v^3\right\|=O\left(\frac{1}{\lambda}\right).
			\end{equation}
			Then, using equation \eqref{eqq3.42}, Lemma \ref{LE3.2} and the facts that $v^1_x$, $\lambda v^1$ are uniformly bounded in $L^2(0,L)$, $f^1$ converges to zero in $H_0^1(0,L)$, $f^4$ converges to zero in $L^2(0,L)$ in \eqref{eqq3.42p}, we get
			\begin{equation}\label{eqq3.52}
			-\rho_1\int_0^{L}\phi'\left|\lambda v^1\right|^2dx-k_1\int_0^{L}\phi'\left| v^1_x\right|^2dx+2\mathrm{l}\left(k_1+k_3\right)\Re\left\{\int_0^{L}\phi v^3_x\overline{v^1_x}dx\right\}=o\left(1\right).
			\end{equation}
			Similarly, multiplying equation \eqref{eqq3.39} by $2\phi  \overline{v^3_x}$ in $L^2(0,L)$,  we get
			\begin{equation}\label{eqq3.53}
			-\rho_1\int_0^{L}\phi'\left|\lambda v^3\right|^2dx-k_3\int_0^{L}\phi'\left| v^3_x\right|^2dx-2\mathrm{l}\left(k_1+k_3\right) \Re\left\{\int_0^{L}\phi v^1_x\overline{v^3_x}dx\right\}=o\left(1\right).
			\end{equation}
			Adding \eqref{eqq3.52} and \eqref{eqq3.53}, we get
			\begin{equation}\label{eqq3.54}
			\rho_1\int_0^{L}\phi'\left(\left|\lambda v^1\right|^2+\left|\lambda v^3\right|^2\right)dx+k_1\int_0^{L}\phi'\left| v^1_x\right|^2dx+k_3\int_0^{L}\phi'\left| v^3_x\right|^2dx=o\left(1\right).
			\end{equation}
			\item Let $\epsilon>0$ such that $\alpha+\epsilon<\beta$ and define the cut-off function $\varsigma_1 \text{ in } C^1\left(\left[0,L\right]\right)$ by
			$$0\leq \varsigma_1 \leq 1,\ \varsigma_1=1 \text{ on } \left[0,\alpha\right] \text{ and } \varsigma_1=0 \text{ on } \left[\alpha+\epsilon,L\right].$$
			Take $\phi=x  \varsigma_1$ in \eqref{eqq3.54} and use the fact that $\left\|U\right\|_{\mathcal{H}}=o\left(1\right)$ on $\left(\alpha,\beta\right)$, we get
			\begin{equation}\label{eqq3.55}
			\rho_1\int_0^{\alpha}\left|\lambda v^1\right|^2dx+\rho_1\int_0^{\alpha}\left|\lambda v^3\right|^2dx+k_1\int_0^{\alpha}\left| v^1_x\right|^2dx+k_3\int_0^{\alpha}\left| v^3_x\right|^2dx=o\left(1\right).
			\end{equation}
			Using Lemmas \ref{LE3.1} and \ref{LE3.2}, in   \eqref{eqq3.55}, we get
			$$\left\|U\right\|_{\mathcal{H}}=o\left(1\right)\text{ on }(0,\alpha).$$
			Similarly, by symmetry, we can prove that
			$\left\|U\right\|_{\mathcal{H}}=o\left(1\right)\text{ on }(\beta,L)$ and
			therefore
			$$\left\|U\right\|_{\mathcal{H}}=o\left(1\right)\text{ on }(0,L).$$
		\end{enumerate}
		Thus the proof is complete.
	\end{proof}
	
	In the sequel, let $0<\alpha < \beta < L$ and consider the function $\varsigma\in C^1\left(\left[0,L\right]\right)$ such that $0\leq \varsigma\leq 1,\ \varsigma=1$ on $\left[\alpha+\epsilon,\beta-\epsilon\right]\subset \left[0,L\right]$ and $\varsigma=0$ on $\left[0,\alpha\right]\cup \left[\beta,L\right]$. Our aim is to prove that $\left\|U\right\|_{\mathcal{H}}=o\left(1\right)$ on $[\alpha,\beta]$ and so by Lemma \ref{LE3.3}, we get $\left\|U\right\|_{\mathcal{H}}=o\left(1\right)$ on $(0,L)$ contradicting \eqref{eqq3.32}.
	
	\begin{lem}\label{LE3.4}
		Suppose that hypothesis {\rm (H)} and   \eqref{eqq3.1} are satisfied. Then we have
		\begin{equation}\label{eqq3.57}
		\int_0^{L}\varsigma\left|v^1_x\right|^2dx=o\left(1\right),\ \text{ and } \int_0^{L}\varsigma\left|\lambda v^1\right|^2dx=o\left(1\right).
		\end{equation}
	\end{lem}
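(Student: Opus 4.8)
The plan is to exploit the equal–speed conditions \eqref{eqq3.1} to close an estimate on the first wave $v^1$ by combining the first and second equations of the system. First I would rewrite \eqref{eqq3.37} and \eqref{eqq3.38} so that the coupling term $k_1\left[v^1_x+v^2+\mathrm{l}v^3\right]$ appears with the same structure in both; under $\frac{k_1}{\rho_1}=\frac{k_2}{\rho_2}$ the principal parts of the $\varphi$– and $\psi$–equations become comparable after dividing by $\rho_1$ and $\rho_2$ respectively, and under $k_1=k_3$ the lower–order curvature terms $\mathrm{l}k_3\left[v^3_x-\mathrm{l}v^1\right]$ and $\mathrm{l}k_1\left[v^1_x+v^2+\mathrm{l}v^3\right]$ align. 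The idea is then to test \eqref{eqq3.37} against a multiplier built from $\varsigma$ and $v^2$ (or $\varsigma v^2_x$), so that the troublesome term $k_1\int \varsigma v^1_x \overline{v^2_x}$ that appeared in \eqref{eqq3.52} gets cancelled against a matching term coming from \eqref{eqq3.38}. Because Lemma \ref{LE3.2} already gives $\|v^2_x\|=o(1)$ and $\|\lambda v^2\|=o(1)$, every term in \eqref{eqq3.38} that is multiplied by $v^2$, $v^2_x$, or $\lambda v^2$ is $o(1)$, which is the leverage that makes the cancellation produce a clean estimate rather than merely an identity.

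Next I would carry out the computation: multiply \eqref{eqq3.37} by $\varsigma\,\overline{v^1}$ (and possibly by $\frac{\varsigma}{\lambda}\overline{(\text{something})}$ to handle the $\lambda^2 v^1$ term via \eqref{eqq3.34}), integrate over $(0,L)$, and use the Dirichlet boundary conditions — here $\varsigma$ vanishes near $0$ and $L$ so in fact no boundary terms survive at all, which is exactly why the cut–off is introduced. This yields, after integration by parts, an expression of the form $k_1\int \varsigma|v^1_x|^2 \;-\; \rho_1\int\varsigma|\lambda v^1|^2$ plus coupling integrals involving $\varsigma' $, $v^2$, $v^3$, and the data $h^1,h^4$. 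The $\varsigma'$ terms are supported on $[\alpha,\alpha+\epsilon]\cup[\beta-\epsilon,\beta]$ and are controlled by the uniform bound $\|U\|_{\mathcal H}=1$ together with Lemma \ref{LE3.2}; the data terms are $o(1)$ since $h^1=f^1\to0$ in $H^1_0$ and $h^4=-\rho_1(f^4+i\lambda_n f^1)$ is $o(1)$ after the customary trick of pairing $\lambda_n f^1$ with $\lambda_n v^1/\lambda_n$. What remains is to eliminate $\int\varsigma|\lambda v^1|^2$: one standard route is to also test \eqref{eqq3.37} with $\varsigma \overline{v^1}$ in a way that isolates $|\lambda v^1|^2$ with the opposite sign, or to invoke the already–derived local identity \eqref{eqq3.54} with $\phi$ replaced by an antiderivative–type multiplier adapted to $\varsigma$; combining the two gives both estimates in \eqref{eqq3.57} simultaneously.

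The main obstacle I anticipate is the presence of the $\varsigma'$–supported cross terms such as $\Re\int \varsigma' v^1 \overline{v^3}$ or $\Re\int\varsigma' \lambda v^1\overline{\lambda v^3}$: these are \emph{not} obviously $o(1)$ because $v^1$ and $v^3$ are only known to be bounded in $H^1_0$, not small, on the support of $\varsigma'$. The resolution is that the cut–off region $[\alpha,\alpha+\epsilon]\cup[\beta-\epsilon,\beta]$ can be made part of the ``good'' set: one either chooses $\alpha,\beta,\epsilon$ so that these terms are absorbed into quantities already shown to be $o(1)$ in forthcoming lemmas (a bootstrap across several $\varsigma$–cutoffs), or one keeps them as $O(1)$ error terms of size controlled by $\epsilon$ and lets $\epsilon\to0$ at the very end. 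A second, milder difficulty is bookkeeping the exact role of the two hypotheses in \eqref{eqq3.1}: I expect $\frac{k_1}{\rho_1}=\frac{k_2}{\rho_2}$ is what makes the second–derivative coupling cancel and $k_1=k_3$ is what makes the first–order curvature coupling cancel, so both are genuinely needed and the proof should make clear where each is invoked. Modulo these points the argument is a routine multiplier computation in the spirit of \eqref{eqq3.42p}–\eqref{eqq3.55}, and the output is the pair of estimates \eqref{eqq3.57} that feed the subsequent lemmas on $v^3$ and on $v^1_x+v^2+\mathrm{l}v^3$.
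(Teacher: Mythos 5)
There is a genuine gap, and it sits exactly at the point your plan treats as automatic. You assert that, thanks to Lemma \ref{LE3.2}, ``every term in \eqref{eqq3.38} that is multiplied by $v^2$, $v^2_x$, or $\lambda v^2$ is $o(1)$'', and that the equal-speed condition is only needed to make the principal parts ``comparable''. This is false for the one term that actually matters: after testing \eqref{eqq3.38} against $\varsigma\,\overline{v^1_x}$ (or any multiplier of that size) the inertial term produces $\rho_2\lambda^2\int\varsigma\, v^2_x\,\overline{v^1}\,dx$ (up to integration by parts), and the available estimates $\left\|v^2_x\right\|=o(1)$, $\left\|\lambda v^2\right\|=o(1)$, $\left\|v^1\right\|=O(1/\lambda)$, $\left\|v^1_x\right\|=O(1)$ only give $o(\lambda)$ for it, not $o(1)$. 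The same is true of the history contribution $\lambda^2\int_0^L\int_0^{+\infty}g(s)v^7_x\,\varsigma\,\overline{v^1}\,ds\,dx$. The entire content of the paper's proof is to combine the multipliers $\varsigma\,\overline{v^1_x}$ on \eqref{eqq3.38}, $\frac{\varsigma}{k_1}\bigl(\widetilde{k}_2\overline{v^2_{x}}+\int_0^{+\infty} g(s)\overline{v^7_{x}}ds\bigr)$ on \eqref{eqq3.37}, and $\frac{\rho_1}{k_1}\lambda\varsigma\,\overline{v^1}$ on the memory equation \eqref{eqq3.60}, so that these uncontrollable pieces collapse into the single term $\bigl(\frac{\rho_2}{k_2}-\frac{\rho_1}{k_1}\bigr)\Re\bigl\{\lambda^2\int_0^L\varsigma\, v^2_x\,\overline{v^1}dx\bigr\}$ (see \eqref{eqq3.63} and \eqref{eqq3.60'}), whose coefficient vanishes precisely when $\frac{k_1}{\rho_1}=\frac{k_2}{\rho_2}$. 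Your proposal never identifies this mechanism, nor how the memory term is converted (via the $\eta$-equation) into a $g^0 v^2_x$ contribution at the $\lambda^2$ level, so the cancellation you are counting on does not materialize; note also that $k_1=k_3$ is not what is needed here (it enters only in Lemma \ref{LE3.5}).

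Your route to the second estimate also does not close. Multiplying \eqref{eqq3.37} by $\varsigma\,\overline{v^1}$ yields, after the easy terms are discarded, the single relation $\rho_1\int_0^L\varsigma\left|\lambda v^1\right|^2dx-k_1\int_0^L\varsigma\left|v^1_x\right|^2dx=o(1)$, in which the two target quantities appear with opposite signs; it cannot give smallness of either one alone, and ``testing again so as to isolate $|\lambda v^1|^2$ with the opposite sign'' is not possible with multipliers of this type. Invoking \eqref{eqq3.54} does not help either: there all four quadratic terms (including those in $v^3$, which are not yet controlled) appear with the same sign and with weight $\phi'$ for $\phi\in H^1_0(0,L)$, so one cannot arrange $\phi'=\varsigma$. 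The correct order is the paper's: first obtain $\int_0^L\varsigma|v^1_x|^2dx=o(1)$ from the equal-speed cancellation described above, and only then use the $\varsigma\,\overline{v^1}$ multiplier on \eqref{eqq3.37} to deduce $\int_0^L\varsigma|\lambda v^1|^2dx=o(1)$. Finally, the obstacle you flagged (the $\varsigma'$-supported cross terms) is not the real difficulty: those terms carry factors $v^2$, $v^7$, or $1/\lambda$ and are harmless with the multipliers above, so no bootstrap in $\epsilon$ is needed.
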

	\begin{proof}
		We show the first estimation of \eqref{eqq3.57}. We proceed in two main steps.
		\begin{enumerate}
			\item  Our first aim is to show that
			\begin{equation}\label{eqq3.63}
			\begin{array}{ll}
			\dfrac{k_1}{k_2}\intspace\varsigma\left| v^1_x\right|^2dx+ \left( \dfrac{\rho_2}{k_2}-\dfrac{\rho_1}{k_1}\right) \Re\left\lbrace \intspace\lambda^2 v^2_x\varsigma \overline{v^1}dx\right\rbrace\\ \\
			+\Re\left\lbrace \dfrac{\rho_1\lambda^2}{k_1k_2}\intspace\left( g^0 v^2_{x}-\inttime g\left(s\right)v^7_{x}ds\right) \varsigma \overline{v^1}dx\right\rbrace =o\left(1\right).
			\end{array}
			\end{equation}
			Multiplying \eqref{eqq3.38} by $\varsigma \overline{ v^1_x}$  in $L^2(0,L)$ and using by parts integration. Then, using Lemmas \ref{LE3.1}, \ref{LE3.2} and the facts that $v^1_x$, $\lambda v^1$ are uniformly bounded in $L^2(0,L)$, $f^2$ converges to zero in $H_0^1(0,L)$, $f^5$ converges to zero in $L^2(0,L)$, we get
			\begin{equation}\label{eqq3.58}
			\begin{array}{ll}
			k_1\intspace\varsigma\left| v^1_x\right|^2dx+  \rho_2\lambda^2\intspace\varsigma  v^2_x\overline{v^1}dx\\\\
			+\intspace \left( \widetilde{k}_2  v^2_x+\inttime g\left(s\right)v^7_{x}ds\right)\varsigma \overline{v^1_{xx}}dx =o\left(1\right).
			\end{array}
			\end{equation}
			Furthermore, multiplying \eqref{eqq3.37} by $\frac{\varsigma}{k_1}\left(\widetilde{k}_2\overline{v^2_{x}}+\int_0^{+\infty} g\left(s\right)\overline{v^7_{x}}ds\right)$ in $L^2(0,L)$ and using by parts integration. Then, using Lemmas \ref{LE3.1}, \ref{LE3.2} and the facts that $v^3_x$, $\lambda v^1$ are uniformly bounded in $L^2(0,L)$, $f^1$ converges to zero in $H_0^1(0,L)$, $f^4$ converges to zero in $L^2(0,L)$, we get
			\begin{equation}\label{eqq3.59}
			\begin{array}{ll}
			\dfrac{\rho_1\lambda^2}{k_1}\intspace\left( \widetilde{k}_2\overline{v^2_{x}}+\inttime g\left(s\right)\overline{v^7_{x}}ds\right) \varsigma v^1dx+ \intspace\left( \widetilde{k}_2 \overline{v^2_{x}}+\inttime g\left(s\right)\overline{v^7_{x}}ds\right)  \varsigma v^1_{xx}dx\\ \\
			+\dfrac{\mathit{i}\rho_1}{k_1}\intspace\left( \lambda\inttime g\left(s\right) \overline{v^7_{x}}ds\right) \varsigma f^1dx=o\left(1\right).
			\end{array}
			\end{equation}
			Subtracting  \eqref{eqq3.58} from \eqref{eqq3.59} and take the real part of the resulting equation, we get 
			\begin{equation}\label{N321}
			\begin{array}{ll}
			k_1\displaystyle\intspace\varsigma\left| v^1_x\right|^2dx -\Re\left\lbrace \dfrac{\rho_1\lambda^2}{k_1}\intspace\left( \widetilde{k}_2\overline{v^2_{x}}+\inttime g\left(s\right)\overline{v^7_{x}}ds\right) \varsigma v^1dx\right\rbrace  \\ \\
			+\Re\left\lbrace  \rho_2\lambda^2\intspace\varsigma v^2_{x}
			\overline{v^1}dx - \dfrac{\mathit{i}\rho_1}{k_1}\intspace\left( \lambda\inttime g\left(s\right) \overline{v^7_{x}}ds\right) \varsigma f^1dx\right\rbrace =o\left(1\right)
			.
			\end{array}
			\end{equation} 
			From \eqref{eqq3.40}, we have
			\begin{equation}\label{eqq3.60}
			\lambda v^7_x-\mathit{i}v^7_{xs}-\lambda  v^2_x =-ih^7_x,\ \text{ in }\ L^2_g\left(\mathbb{R}_+,L^2\right).
			\end{equation}
			Multiplying \eqref{eqq3.60} by $\varsigma \overline{ f^1}$ in
			$L^2_g\left(\mathbb{R}_+,L^2\right)$ and using by parts integration. Then, using hypothesis (H), Lemmas \ref{LE3.1}, \ref{LE3.2} and the facts that $f^1$, $f^2$ converge to zero in $H_0^1(0,L)$, $f^7$ converges to zero in $L_g^2(\rr_+;H^1_0(0,L))$, we get
			\begin{equation}\label{N123}
			\begin{array}{ll}
			\intspace\left( \lambda\inttime g\left(s\right)v^7_{x}ds\right)  \varsigma \overline{ f^1}dx
			=-\mathit{i} \intspace\left( \inttime g^\prime\left(s\right)v^7_{x}ds\right)  \varsigma \overline{f^1}dx\\\\
			-g^0 \intspace \lambda  v^2\left(\varsigma'  \overline{f^1}+\varsigma \overline{f^1_x}\right)dx + ig^0 \intspace  f^2_x\varsigma \overline{f^1}dx-i\intspace\inttime g(s)f^7_x\varsigma \overline{f^1}dx=o\left(1\right).
			\end{array}
			\end{equation}
			Finally, inserting   \eqref{N123} in \eqref{N321} and using the fact that $\tilde{k}_2=k_2-g^0,$ we get \eqref{eqq3.63}.
			\item Our next aim is to prove
			\begin{equation}\label{eqq3.60'}
			k_1\int_0^{L}\varsigma\left| v^1_x\right|^2dx+\lambda^2k_2\left( \frac{\rho_2}{k_2}
			-\frac{\rho_1}{k_1}\right)\int_0^{L}\varsigma  v^2_x\overline{v^1}dx=o\left(1\right).
			\end{equation}
			Multiplying \eqref{eqq3.60} by $\displaystyle{\frac{\rho_1}{k_1}\lambda\varsigma \overline{v^1}}$ in
			$L^2_g\left(\mathbb{R}_+,L^2\right)$ and using by parts integration. Then, using hypothesis (H), Lemma \ref{LE3.1}, and the facts that $\lambda v^1$ is uniformly bounded in $L^2(0,L)$, $f^2$ converges to zero in $H_0^1(0,L)$, $f^7$ converges to zero in $L_g^2(\rr_+;H^1_0(0,L))$, we get
			\begin{equation}\label{eqq3.65}
			\begin{array}{ll}
			\dfrac{\rho_1\lambda^2}{k_1k_2}  \intspace\inttime g\left(s\right)v^7_{x}\varsigma \overline{ v^1}dsdx
			=-i \dfrac{\rho_1}{k_1k_2}\intspace\inttime g^\prime\left(s\right)v^7_{x}\varsigma \lambda \overline{v^1}dsdx\\\\
			+\dfrac{\rho_1\lambda^2}{k_1k_2}g^0\intspace v^2_x\varsigma \overline{v^1}dx
			-i \dfrac{\rho_1}{k_1k_2}\intspace\inttime g\left(s\right)(f^7_{x}-f^2_x)\varsigma \lambda \overline{v^1}dsdx
			=o\left(1\right).
			\end{array}
			\end{equation}
			Adding \eqref{eqq3.63} and  \eqref{eqq3.65},  we deduce \eqref{eqq3.60'}.
			\item Finally, using condition \eqref{eqq3.1} in \eqref{eqq3.60'}, we  get the first estimation of \eqref{eqq3.57}. Moreover, multiplying \eqref{eqq3.37} by $\varsigma \overline{v^1}$  in $L^2\left(0,L\right)$ , using \eqref{eqq3.32}, \eqref{eqq3.33}, \eqref{eqq3.42}, and the first estimation of \eqref{eqq3.57},
			we can easily prove that $$ \int_0^{L}\varsigma\left| \lambda v^1\right|^2dx=o\left(1\right).$$
		\end{enumerate}
		Thus the proof is complete.
	\end{proof}
	\begin{lem}\label{LE3.5}
		Assume that hypothesis {\rm (H)} and \eqref{eqq3.1} are satisfied. Then
		\begin{equation}\label{eqq3.67}
		\int_0^{L}\varsigma\left|v^3_x\right|^2dx=o\left(1\right)\,\,\, {\rm and }\,\,\,  \int_0^{L}\varsigma\left| \lambda v^3\right|^2dx=o\left(1\right).
		\end{equation}
	\end{lem}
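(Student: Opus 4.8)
The plan is to run, with the obvious modifications, the argument that proved Lemma~\ref{LE3.4}, now reading off the estimates for the longitudinal component $v^{3}$ from the third equation \eqref{eqq3.39} coupled with the shear-angle equation \eqref{eqq3.38} and the first equation \eqref{eqq3.37}. The crucial new feature is that Lemma~\ref{LE3.4} is already available, so its conclusions $\|\varsigma^{1/2}v^{1}_{x}\|=o(1)$ and $\|\varsigma^{1/2}\lambda v^{1}\|=o(1)$ may be used freely, together with Lemmas~\ref{LE3.1} and \ref{LE3.2} and the a~priori bounds \eqref{eqq3.42}, to absorb into $o(1)$ every term carrying a factor $v^{1}$, $v^{2}$ or $v^{7}$. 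As in Lemma~\ref{LE3.4}, all boundary contributions generated by the integrations by parts vanish because $\varsigma$ is supported in the interior of $(0,L)$.

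\emph{Step 1 (an analogue of \eqref{eqq3.60'}).} I would multiply \eqref{eqq3.38} by $\varsigma\,\overline{v^{3}_{x}}$ in $L^{2}(0,L)$ and \eqref{eqq3.39} by $\tfrac{\varsigma}{k_{3}}\bigl(\widetilde{k}_{2}\overline{v^{2}_{x}}+\int_{0}^{+\infty}g(s)\overline{v^{7}_{x}}\,ds\bigr)$ in $L^{2}(0,L)$, integrate by parts and take real parts, also bringing in \eqref{eqq3.37} --- here is the one genuinely delicate point, because, unlike in the $v^{1}$ case, the quadratic term $\int_{0}^{L}\varsigma|v^{3}_{x}|^{2}dx$ is \emph{not} produced by the shear stress $k_{1}(v^{1}_{x}+v^{2}+\mathrm{l}v^{3})$ (that contribution only yields the cross term $\int_{0}^{L}v^{1}_{x}\varsigma\overline{v^{3}_{x}}dx=o(1)$ via Lemma~\ref{LE3.4}) but by the longitudinal stress $k_{3}(v^{3}_{x}-\mathrm{l}v^{1})$, whose zeroth-order occurrence $\mathrm{l}k_{3}(v^{3}_{x}-\mathrm{l}v^{1})$ lives in \eqref{eqq3.37}; the combination of multipliers must therefore be arranged so that this term survives with a fixed positive coefficient. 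Using Lemmas~\ref{LE3.1}, \ref{LE3.2}, \ref{LE3.4}, the boundedness of $\{v^{3}_{x}\}$ and $\{\lambda v^{3}\}$ in $L^{2}(0,L)$ and the convergences of the right-hand sides, I expect to reach the analogue of \eqref{eqq3.63},
\[
\begin{array}{l}
\displaystyle\frac{k_{3}}{k_{2}}\int_{0}^{L}\varsigma\,|v^{3}_{x}|^{2}\,dx+\Bigl(\frac{\rho_{2}}{k_{2}}-\frac{\rho_{1}}{k_{3}}\Bigr)\Re\Bigl\{\int_{0}^{L}\lambda^{2}v^{2}_{x}\,\varsigma\,\overline{v^{3}}\,dx\Bigr\}\\[3mm]
\displaystyle\qquad\qquad+\,\Re\Bigl\{\frac{\rho_{1}\lambda^{2}}{k_{2}k_{3}}\int_{0}^{L}\Bigl(g_{0}v^{2}_{x}-\int_{0}^{+\infty}g(s)v^{7}_{x}\,ds\Bigr)\varsigma\,\overline{v^{3}}\,dx\Bigr\}=o(1).
\end{array}
\]
Testing \eqref{eqq3.60} against the appropriate multiple of $\lambda\varsigma\overline{v^{3}}$ in $L^{2}_{g}(\mathbb{R}_{+},L^{2})$, exactly as \eqref{eqq3.65} was obtained, shows the last bracket is $o(1)$, while the memory remainder entering through $h^{6}$ is handled by testing \eqref{eqq3.60} against $\varsigma\overline{f^{3}}$, as in \eqref{N123}. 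This leaves the analogue of \eqref{eqq3.60'},
\[
k_{3}\int_{0}^{L}\varsigma\,|v^{3}_{x}|^{2}\,dx+\lambda^{2}k_{2}\Bigl(\frac{\rho_{2}}{k_{2}}-\frac{\rho_{1}}{k_{3}}\Bigr)\int_{0}^{L}\varsigma\,v^{2}_{x}\,\overline{v^{3}}\,dx=o(1).
\]

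\emph{Step 2 (conclusion).} Hypothesis \eqref{eqq3.1} gives $k_{1}=k_{3}$, hence $\tfrac{\rho_{1}}{k_{3}}=\tfrac{\rho_{1}}{k_{1}}=\tfrac{\rho_{2}}{k_{2}}$, so the coefficient of the coupling term vanishes and the last identity collapses to $\int_{0}^{L}\varsigma|v^{3}_{x}|^{2}dx=o(1)$, the first estimate of \eqref{eqq3.67}. For the second, I multiply \eqref{eqq3.39} by $\varsigma\overline{v^{3}}$ in $L^{2}(0,L)$ and integrate the stress term by parts; by \eqref{eqq3.34}--\eqref{eqq3.36}, \eqref{eqq3.42}, Lemma~\ref{LE3.4} and the fact that $h^{6}\to0$ once paired with $v^{3}$, all terms but $\rho_{1}\lambda^{2}\int_{0}^{L}\varsigma|v^{3}|^{2}dx$ and $k_{3}\int_{0}^{L}\varsigma|v^{3}_{x}|^{2}dx$ are $o(1)$, so that $\rho_{1}\int_{0}^{L}\varsigma|\lambda v^{3}|^{2}dx=k_{3}\int_{0}^{L}\varsigma|v^{3}_{x}|^{2}dx+o(1)=o(1)$.

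\emph{Expected main obstacle.} It is bookkeeping rather than conceptual. Since \eqref{eqq3.39} is coupled to $v^{1}$ through both the shear stress and the longitudinal stress, the computation throws up many cross terms between $v^{1}$ and $v^{3}$ absent from the proof of Lemma~\ref{LE3.4}, and one must check that each is killed by Lemma~\ref{LE3.4} (typically after a Cauchy--Schwarz splitting such as $\int_{0}^{L}v^{1}_{x}\varsigma\overline{v^{3}_{x}}dx\le\|\varsigma^{1/2}v^{1}_{x}\|\,\|\varsigma^{1/2}v^{3}_{x}\|$). At the same time the combination of multipliers must be tuned so that $\int_{0}^{L}\varsigma|v^{3}_{x}|^{2}dx$ survives with a fixed positive coefficient while the genuinely dangerous term $\lambda^{2}\int_{0}^{L}\varsigma v^{2}_{x}\overline{v^{3}}dx$ --- which is only $o(\lambda)$ a priori --- survives exactly with the coefficient $\tfrac{\rho_{2}}{k_{2}}-\tfrac{\rho_{1}}{k_{3}}$, so that the equal-speed hypothesis \eqref{eqq3.1} is precisely what removes it; keeping the integrations by parts on \eqref{eqq3.37}--\eqref{eqq3.39} under tight control is what makes this step delicate.
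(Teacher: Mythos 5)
Your Step~2 and your treatment of the memory bracket (the \eqref{eqq3.65}-- and \eqref{N123}--type manipulations with $v^{3}$ in place of $v^{1}$) are sound, but Step~1 --- the point you yourself flag as delicate --- is precisely where the argument has a genuine gap. The two multipliers you actually specify, \eqref{eqq3.38} against $\varsigma\overline{v^{3}_{x}}$ and \eqref{eqq3.39} against $\frac{\varsigma}{k_{3}}\bigl(\widetilde{k}_{2}\overline{v^{2}_{x}}+\int_{0}^{+\infty}g(s)\overline{v^{7}_{x}}ds\bigr)$, produce no $\int_{0}^{L}\varsigma|v^{3}_{x}|^{2}dx$ term at all: in \eqref{eqq3.38} the only zeroth-order occurrence of $v^{3}$ is $\mathrm{l}k_{1}v^{3}$, of size $O(1/\lambda)$, so after taking real parts the common term $\int_{0}^{L}\bigl(\widetilde{k}_{2}v^{2}_{x}+\int_{0}^{+\infty}g v^{7}_{x}ds\bigr)\varsigma\overline{v^{3}_{xx}}dx$ cancels and what survives is only $k_{2}\bigl(\tfrac{\rho_{1}}{k_{3}}-\tfrac{\rho_{2}}{k_{2}}\bigr)\lambda^{2}\Re\int_{0}^{L}\varsigma v^{2}_{x}\overline{v^{3}}dx=o(1)$, whose coefficient vanishes identically under \eqref{eqq3.1}; the identity is then vacuous and gives no control of $v^{3}_{x}$. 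To create the $|v^{3}_{x}|^{2}$ term you must, as you say, test \eqref{eqq3.37} against a multiple of $\varsigma\overline{v^{3}_{x}}$, but that multiplier generates the cross terms $\rho_{1}\lambda^{2}\int_{0}^{L}v^{1}\varsigma\overline{v^{3}_{x}}dx$ and (after integration by parts) $k_{1}\int_{0}^{L}v^{1}_{x}\varsigma\overline{v^{3}_{xx}}dx$, and these are \emph{not} killed by Lemma \ref{LE3.4} plus Cauchy--Schwarz as your ``expected obstacle'' paragraph asserts: since $\|v^{3}_{x}\|=O(1)$ and $\|v^{3}_{xx}\|=O(\lambda)$, Lemma \ref{LE3.4} only yields $o(\lambda)$ for both, not $o(1)$. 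They must be cancelled exactly, and your proposal never specifies the combination that does this.

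That exact cancellation is the paper's proof, and it is structurally different from your scheme: one adds \eqref{eqq3.37} tested with $\varsigma\overline{v^{3}_{x}}$ (giving \eqref{eqq3.68}) to \eqref{eqq3.39} tested with $\varsigma\overline{v^{1}_{x}}$ (giving \eqref{eqq3.69}) and takes real parts; the terms $\rho_{1}\lambda^{2}\int_{0}^{L}v^{1}\varsigma\overline{v^{3}_{x}}dx$ and $-\rho_{1}\lambda^{2}\int_{0}^{L}v^{3}_{x}\varsigma\overline{v^{1}}dx$ cancel by conjugate symmetry, the second-order cross terms $-k_{1}\int_{0}^{L}v^{1}_{x}\varsigma\overline{v^{3}_{xx}}dx$ and $k_{3}\int_{0}^{L}\overline{v^{1}_{x}}\varsigma v^{3}_{xx}dx$ cancel precisely because $k_{1}=k_{3}$, and what remains is $\mathrm{l}(k_{1}+k_{3})\int_{0}^{L}\varsigma|v^{3}_{x}|^{2}dx=o(1)$. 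Note the mismatch with your design: at this stage the paper uses only $k_{1}=k_{3}$ (the equal-speed condition $\frac{\rho_{1}}{k_{1}}=\frac{\rho_{2}}{k_{2}}$ entered only through Lemma \ref{LE3.4}), whereas your route hinges on the equal-speed condition annihilating a $v^{2}_{x}$--$v^{3}$ coupling coefficient, which, as shown above, is not the actual obstruction. Your derivation of the second estimate in \eqref{eqq3.67} (multiplying \eqref{eqq3.39} by $\varsigma\overline{v^{3}}$) coincides with the paper's and is fine once the first estimate is secured.
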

	\begin{proof}
		Multiplying \eqref{eqq3.37} by $\varsigma \overline{v_x^3}$ in $L^2(0,L)$ and using by parts integration. Then, using Lemmas \ref{LE3.2}, \ref{LE3.4} and the facts that  $f^1$ converges to zero in $H_0^1(0,L)$, $f^4$ converges to zero in $L^2(0,L)$, we get
		\begin{equation}\label{eqq3.68}
		\rho_1\intspace\lambda^2 v^1 \varsigma \overline{v_x^3}dx+\mathrm{l}\left( k_1+k_3\right)\intspace\varsigma\left|v^3_x\right|^2dx -k_1\intspace v^1_{x}\varsigma \overline{v_{xx}^3}dx=o\left(1\right).
		\end{equation}
		Moreover,  multiplying \eqref{eqq3.39} by $\varsigma\overline{ v^1_x}$ in $L^2(0,L)$ and using by parts integration. Then, using Lemmas \ref{LE3.2}, \ref{LE3.4} and the facts that  $\lambda v^3$ is uniformly bounded in $L^2(0,L)$, $f^3$ converges to zero in $H_0^1(0,L)$, $f^6$ converges to zero in $L^2(0,L)$, we get
		\begin{equation}\label{eqq3.69}
		-\rho_1\intspace \lambda^2 v^3_x  \varsigma \overline{v^1} dx+ k_3\intspace \overline{v^1_x}\varsigma v^3_{xx}dx=o\left(1\right).
		\end{equation}
		Take the real part of the sum of \eqref{eqq3.68} and \eqref{eqq3.69}. Then, using the fact that $k_1=k_3$, we get
		\begin{equation}\label{eqq3.70}
		\int_0^{L}\varsigma\left|v^3_x\right|^2dx=o\left(1\right).
		\end{equation}
		Next, if we multiplying \eqref{eqq3.39} by $\varsigma\overline{ v^3}$ in $L^2\left(0,L\right)$, then from  \eqref{eqq3.42}, \eqref{eqq3.70} and Lemma \ref{LE3.4}, we deduce that
		$$\rho_1\int_0^{L}\varsigma\left|\lambda v^3\right|^2dx=o\left(1\right).$$
		Thus the proof is complete.
	\end{proof}
	
	\noindent \textbf{Proof of Theorem \ref{TH3.5}} \ Using Lemma \ref{LE3.1}, Lemma \ref{LE3.2}, Lemma \ref{LE3.4}, and  Lemma \ref{LE3.5},  we get
	$\|U\|_{\mathcal{H}}=o\left(1\right)$ on  $\left[\alpha+\epsilon,\beta-\epsilon\right]$. Hence,  by Lemma \ref{LE3.3},  we get $\|U\|_{\mathcal{H}}=o\left(1\right)$ on $\left[0,L\right]$ which contradicts \eqref{eqq3.32}. Therefore, {\rm (H2)} holds and so, by \cite{Huang:85} and \cite{Pruss}, we deduce the  exponential stability of the system \eqref{eqq2.1}-\eqref{eqq2.3} propagating with equal speeds.
	
	\begin{rk}
		It is easy to see that our technique used for the proof of the exponential stability of the Bresse system under fully Dirichlet boundary conditions is also valid under mixed boundary conditions. 
	\end{rk}
	
	\subsection{Lack of exponential  stability with  different speed}\label{se4}
	
	\noindent In this part, our goal is to show that the elastic Bresse system \eqref{eqq2.1}-\eqref{eqq2.3}  with fully Dirichlet boundary conditions  is not exponentially stable if the speeds of propagation of the waves are different. In particular, we consider the case when $\mathrm{l}\rightarrow0$; i.e, when \eqref{eqq2.1}-\eqref{eqq2.3} reduces to the  Timoshenko system \eqref{timo}-\eqref{dirich} with $\frac{\rho_1}{k_1}\neq\frac{\rho_2}{k_2}$.
	In fact, when the speeds of propagation are different, if mixed Dirichlet-Neumann boundary conditions  are considered in  system \eqref{eqq2.1}  instead of fully Dirichlet boundary conditions, then we can easily show that the system is not exponentially decaying. Indeed,  similar to \cite{AlabauBresse}, \cite{Wehbenadine}, \cite{Lucifatori}, \cite{hugo-Sare-racke}, and \cite{santossoufyane}, the idea is to find a sequence of $(\lambda_n)_n\subseteq \mathbb{R}$ with $\left| \lambda_n\right| \longrightarrow +\infty$ and a sequence of vectors  $(U_n)_n\subseteq D\left(\mathcal{A}\right)$ with $\left\| U_n\right\| _\HH=1$ such that $\left\| (i\lambda_n Id-\mathcal{A})U_n\right\|_\HH\longrightarrow 0$. In the case of  Dirichlet-Neumann-Neumann boundary condition, this approach worked well due to the fact that all eigenmodes are separable, i.e., the system operator can be decomposed to a block-diagonal form according to the frequency when the state variables are expanded into Fourier series. However, in the case of fully Dirichlet boundary conditions,
	this approach has no success in the literature to our knowledge and the problem is still be open. Consequently, in this section, we use  another approach based on the behavior of the spectrum  to prove the lack of exponential stability of the system mainly in the case when $\mathrm{l}\rightarrow0$. For simplicity, in this section, we take $L=1$ so \eqref{eqq2.1}-\eqref{eqq2.3}  reduces to the following  Timoshenko system:
	
	\begin{equation}\label{timo}
	\left\{
	\begin{array}{l}
	\displaystyle{
		\rho_1\varphi_{tt}-k_1 \left(\varphi_x+\psi\right)_x=0,}\\
	\displaystyle{\rho_2 \psi_{tt}-\widetilde{k}_2\psi_{xx}+k_1\left(\varphi_x+\psi\right)-\int_0^{+\infty} g\left(s\right)\eta_{xx}ds=0,}\\
	\displaystyle{\eta_t+\eta_s-\psi_t=0,}
	\end{array}
	\right.
	\end{equation}
	with the  initial conditions
	\begin{equation*}
	\begin{array}{lll}
	\varphi\left(\cdot,0\right)=\varphi_0\left(\cdot\right),\ \psi\left(\cdot,-t\right)=\psi_0\left(\cdot,t\right),
	& \\
	\varphi_t\left(\cdot,0\right)=\varphi_1\left(\cdot\right),\
	\psi_t\left(\cdot,0\right)=\psi_1\left(\cdot\right),\
	& \\
	\eta^0\left(\cdot,s\right):=\eta\left(\cdot,0,s\right)=\psi_0\left(\cdot,0\right)-\psi_0\left(\cdot,s\right)&\text{in }\left(0,1\right),\ s\geq0,
	\end{array}
	\end{equation*}
	and fully Dirichlet boundary conditions
	\begin{equation}\label{dirich}
	\begin{array}{lll}
	\varphi\left(0,\cdot\right)=\varphi\left(1,\cdot\right)=\psi\left(0,\cdot\right)=\psi\left(1,\cdot\right)=0\quad&\text{in }\mathbb{R}_{+},\\
	\eta\left(0,\cdot,\cdot\right)=\eta\left(1,\cdot,\cdot\right)=0\quad&\text{in }\mathbb{R}_{+}\times\mathbb{R}_{+},\\
	\eta\left(\cdot,\cdot,0\right)=0&\text{in} \left(0,1\right)\times\mathbb{R}_{+}.
	\end{array}
	\end{equation}
	In this case, the energy space $\mathcal{H}$ reduces to
	\begin{equation*}
	\mathcal{H}_1=\left(H_0^1\left(0,1\right)\right)^2\times\left(L^2\left(0,1\right)\right)^2\times L^2_g\left(\mathbb{R}_+,H^1_0\right)
	\end{equation*}
	and the generator $\mathcal{A}$ becomes the operator $\mathcal{A}_1$  defined by
	\begin{equation*}
	\begin{array}{l}
	D\left(\mathcal{A}_1\right)=\bigg\{\ U=(v^1,v^2,v^3,v^4,v^5)^{\mathsf{T}}\in\mathcal{H}_1 \ |\  v^1\in H^2\left(0,1\right),\  v^3,v^4\in  H^1_0\left(0,1\right),\ v^5_s\in L^2_g\left(\mathbb{R}_+,H^1_0\right), \\ \hspace{3.5cm} \ v^2+\int_0^{+\infty}g\left(s\right)v^5ds\in H^2\left(0,1\right)\cap H^1_0\left(0,1\right),\ v^5\left(x,0\right)=0\bigg\}
	\end{array}
	\end{equation*}
	and
	\begin{equation}\label{generator}
	\mathcal{A}_1U=\left(\begin{array}{c}
	v^3\\ v^4\\
	\rho_1^{-1}k_1 \left(v^1_x+v^2\right)_x\\
	\rho_2^{-1} \left(\widetilde{k}_2 v^2_{xx}-k_1\left(v^1_x+v^2\right)+\int_0^{+\infty} g\left(s\right)v^5_{xx}ds\right)\\
	v^2-v^5_s
	\end{array}\right)
	\end{equation}
	for all $U=\left(v^1,v^2,v^3,v^4,v^5\right)^{\mathsf{T}}\in D\left(\mathcal{A}_1\right).$\\
	
	Throughout this part, in addition to hypothesis {\rm (H)}, we assume that
	\begin{equation}\tag{$\text{H}'$}
	\frac{\rho_1}{k_1}\neq\frac{\rho_2}{k_2} \  \textrm {and}  \ \left|g''\left(s\right)\right|\leq c_2 g\left(s\right) \ \textrm{for some } c_2>0.
	\end{equation}
	\begin{thm}\label{nonuniform}
		Under hypothesis {\rm (H)} and ({$\text{H}'$}), system \eqref{timo}-\eqref{dirich} is not uniformly stable in the energy space $\mathcal{H}_1.$
	\end{thm}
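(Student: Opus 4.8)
The plan is to disprove uniform stability by exhibiting a sequence of frequencies $\lambda_n\to+\infty$ and data $F_n\to 0$ in $\mathcal{H}_1$ for which the solutions $U_n=(i\lambda_n\,\mathrm{Id}-\mathcal{A}_1)^{-1}F_n$ stay bounded away from $0$, which by the Gearhart--Huang--Pr\"uss characterization contradicts (H2) and hence exponential decay. Since the classical separation-of-variables ansatz fails under fully Dirichlet conditions (the Fourier modes of $\varphi$ and $\psi$ do not decouple cleanly through the boundary), I would instead build an \emph{approximate} eigenfunction: take $\varphi_n(x)=A_n\sin(n\pi x)$ and $\psi_n(x)=B_n\cos(n\pi x)$ (so $\varphi_n$ meets the Dirichlet condition exactly while $\psi_n$ only approximately, producing a controllably small residual), choose the history component via the explicit formula $v^5_n(x,s)=(1-e^{-i\lambda_n s})v^2_n(x)$ forced by \eqref{eqq3.20}, and pick $\lambda_n$ near the dispersion relation of the \emph{limiting} undamped Timoshenko system at wavenumber $n\pi$.

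The key steps, in order, would be: (i) write down the resolvent system \eqref{eqq3.14}--\eqref{eqq3.20} specialized to $\mathcal{A}_1$ and substitute the history formula so that the problem reduces to a $2\times 2$ elliptic system in $(v^1,v^2)$ with the complex coefficient $\breve{k}_2(\lambda)=k_2-\int_0^\infty e^{-i\lambda s}g(s)\,ds$, noting that $\breve{k}_2(\lambda)\to k_2$ as $|\lambda|\to\infty$ by Riemann--Lebesgue and that the correction is $O(1/\lambda)$ — here hypotheses (H) and especially $|g''(s)|\le c_2 g(s)$ from ($\mathrm{H}'$) are used to get the sharp rate, via integration by parts in $s$. (ii) Insert the trigonometric ansatz; the two PDEs become two algebraic relations among $A_n,B_n$, the data amplitudes, and $\lambda_n$, up to boundary residuals that are $O(\text{something})$ in $\mathcal{H}_1$-norm. (iii) Choose $\lambda_n$ so that the determinant of the leading $2\times 2$ system is as small as possible — this is where $\frac{\rho_1}{k_1}\ne\frac{\rho_2}{k_2}$ enters decisively: the two branches of the dispersion relation separate, and one can sit on one branch while the damping (which acts only through the $\psi$-equation and is weak at high frequency because $g$ is integrable) fails to move the relevant eigenvalue off the imaginary axis fast enough. (iv) Solve for $A_n,B_n$ and $F_n$, normalize $\|U_n\|_{\mathcal{H}_1}=1$, and check that $\|F_n\|_{\mathcal{H}_1}=o(1)$; conclude.

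Concretely, I would try to show that along this sequence $\Re\langle\mathcal{A}_1 U_n,U_n\rangle = \tfrac12\int_0^1\int_0^\infty g'(s)|(v^5_n)_x|^2\,ds\,dx = O(1/\lambda_n^2)$ (the dissipation is quadratically small because $v^5_n = O(1/\lambda_n)$ in the appropriate sense once $v^2_n$ is taken small in the right norm — or, alternatively, arrange $v^2_n$ of order one but exploit $\int_0^\infty|g'(s)|\,|1-e^{-i\lambda s}|^2\,ds = O(1/\lambda)$), while $\|F_n\|\to 0$, which directly yields $\sup_\lambda\|(i\lambda-\mathcal{A}_1)^{-1}\|=\infty$. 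The main obstacle I anticipate is step (iii): making the choice of $\lambda_n$ precise enough that the boundary-mismatch residual from $\psi_n(x)=B_n\cos(n\pi x)$ (which is $O(1)$ pointwise at $x=0,1$ but enters only through lower-order terms) and the memory perturbation $\breve{k}_2(\lambda_n)-k_2$ are \emph{both} dominated by the gap between the two dispersion branches; this requires a careful asymptotic expansion of the characteristic determinant and, most likely, a Rouch\'e-type argument to locate an exact or near-exact eigenvalue of $\mathcal{A}_1$ within $o(1)$ of the imaginary axis. The condition $|g''(s)|\le c_2 g(s)$ is presumably exactly what is needed to push the expansion of $\breve{k}_2(\lambda)$ to the order $1/\lambda^2$ that makes this comparison work.
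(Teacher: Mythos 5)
There is a genuine gap, and it sits exactly where the paper says the difficulty lies. Your central device is a quasimode built from $\varphi_n=A_n\sin(n\pi x)$, $\psi_n=B_n\cos(n\pi x)$, with the boundary mismatch of $\psi_n$ treated as a ``controllably small residual''. But $\cos(n\pi x)\notin H^1_0(0,1)$, so your candidate $U_n$ is not in the energy space $\mathcal{H}_1$ (let alone in $D(\mathcal{A}_1)$), and the mismatch cannot be absorbed as a forcing term: any corrector that restores the Dirichlet values either has $O(1)$ $H^1$-mass (and then the term $\rho_2\lambda_n^2\psi_n$ turns the residual into something of size $\lambda_n^2$, not $o(1)$) or, if concentrated in a boundary layer of width $\epsilon_n$, blows up in $H^1$ like $\epsilon_n^{-1/2}$. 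This is precisely why the paper states that the quasimode/modal approach, which works under Dirichlet--Neumann conditions because the Fourier modes block-diagonalize the operator, ``has no success'' under fully Dirichlet conditions; your step (iii) acknowledges the obstacle but offers no mechanism that overcomes it. A second, quantitative error: your fallback estimate $\int_0^{\infty}|g'(s)|\,|1-e^{-i\lambda s}|^2\,ds=O(1/\lambda)$ is false; since $|1-e^{-i\lambda s}|^2=2-2\cos(\lambda s)$, Riemann--Lebesgue gives the limit $2\int_0^\infty|g'(s)|ds=2g(0)>0$. So taking $v^2_n$ of order one cannot make the dissipation small --- that regime corresponds exactly to the paper's first branch of eigenvalues, whose real parts converge to $-g(0)/(2k_2)<0$ and which decays exponentially.

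What the paper actually does is different in kind: it performs an exact spectral analysis of the fully Dirichlet boundary-value problem. The eigenvalue equations are reduced to the fourth-order problem \eqref{1.13} for $v^2$, whose general solution is a combination of four exponentials $e^{\pm r_1x},e^{\pm r_3x}$; the boundary conditions give a $4\times4$ characteristic determinant, i.e.\ the function $F$ in \eqref{characeq}. Hypothesis ($\text{H}'$), via two integrations by parts in $s$ (this is where $|g''|\le c_2g$ enters, as you correctly guessed), yields the expansion $\underline{k}_2^{-1}=k_2^{-1}+g(0)k_2^{-2}\lambda^{-1}+O(\lambda^{-2})$, whence $F(\lambda)$ is asymptotically proportional to $\sinh(r_1)\sinh(r_3)$; a Rouch\'e argument then locates two branches of \emph{genuine} eigenvalues, \eqref{branch1} and \eqref{branch2}, the second of which, $\lambda^{(1)}_{n'}=in'\pi\sqrt{k_1/\rho_1}+o(1)$, approaches the imaginary axis and rules out uniform stability. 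Your closing remark about a Rouch\'e-type localization of a near-imaginary eigenvalue points in the right direction, but without the reduction to the boundary characteristic determinant (and abandoning the sine/cosine modal ansatz, which does not decouple under fully Dirichlet conditions) the proposal as written does not close.
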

	For the proof of Theorem \ref{nonuniform}, we aim to  show that an infinite number of eigenvalues of $\mathcal{A}_1$ approach the imaginary axis which prevents the Timoshenko system \eqref{timo}-\eqref{dirich} from being exponentially stable.  First we determine the characteristic equation satisfied by the eigenvalues of $\mathcal{A}_1$. For this aim, Let $\lambda\in\mathbb{C}$ be an eigenvalue of $\mathcal{A}_1$ and let $U=\left(v^1,v^2,v^3,v^4,v^5\right)^{\mathsf{T}}\in D(\mathcal{A}_1)$  be an associated eigenvector such that $\|U\|_{\mathcal{H}_1}=1$. Then
	\begin{eqnarray}
	v^3=\lambda v^1 \label{1.1},\\
	v^4=\lambda v^2 \label{1.2},\\
	k_1\left(v^1_x+v^2\right)_x=\rho_1 \lambda v^3\label{1.3},\\
	\widetilde{k}_2 v^2_{xx}-k_1\left(v^1_x+v^2\right)+\int_0^{+\infty}g\left(s\right)v^5_{xx} ds=\rho_2 \lambda v^4\label{1.4},\\
	v^4-v^5_s=\lambda v^5\label{1.5}.
	\end{eqnarray}
	From \eqref{1.5} and \eqref{1.2}, we have
	$$v^5_s+\lambda v^5=\lambda v^2.$$
	Integrating this equation and using the fact that $v^5\left(x,0\right)=0$, we get
	\begin{equation}\label{1.6}
	v^5=v^2\left(1-e^{-\lambda s}\right).
	\end{equation}
	Inserting \eqref{1.6}, \eqref{1.1}-\eqref{1.2} in \eqref{1.3}-\eqref{1.4}, we get
	\begin{eqnarray}
	\frac{k_1}{\rho_1}\left(v^1_x+v^2\right)_x= \lambda^2 v^1\label{1.7},\\
	\frac{\underline{k}_2}{\rho_2} v^2_{xx}-\frac{k_1}{\rho_2}\left(v^1_x+v^2\right)= \lambda^2 v^2\label{1.8},
	\end{eqnarray}
	where $\displaystyle{\underline{k}_2=k_2-\int_0^{+\infty}g\left(s\right)e^{-\lambda s}ds}$. Equivalently, we have
	\begin{equation}\label{1.13}
	\left\{
	\begin{array}{ll}
	\displaystyle{v^2_{xxxx}-\left( \frac{\rho_2}{\underline{k}_2} +\frac{\rho_1}{k_1}\right)\lambda^2 v^2_{xx}+\frac{\rho_1\rho_2}{k_1\underline{k}_2}\lambda^2 \left(\lambda^2+\frac{k_1}{\rho_2}\right) v^2= 0,}\\  \\
	\displaystyle{v^2\left(\zeta\right)=0,\ v^2_{xxx}\left(\zeta\right)- \frac{\rho_2}{\underline{k}_2}\lambda^2 v^2_x\left(\zeta\right)=0,\ \zeta=0,1.}
	\end{array}\right.
	\end{equation}
	The solution of \eqref{1.13} is given by
	$$v^2\left(x\right)=\sum_{j=1}^{4}c_j e^{r_j x},$$
	where $c_j\in\mathbb{C}$ for all $1\leq j\leq4$ and
	\begin{equation*}
	\left\{
	\begin{array}{l}
	\displaystyle{r_1=\lambda\sqrt{\dfrac{\left( \frac{\rho_2}{\underline{k}_2} +\frac{\rho_1}{k_1}\right)+\sqrt{\left( \frac{\rho_2}{\underline{k}_2} -\frac{\rho_1}{k_1}\right)^2-\frac{4\rho_1}{\underline{k}_2\lambda^2}}}{2}},\ r_2=-r_1,}\\ \\
	\displaystyle{r_3=\lambda\sqrt{\dfrac{\left( \frac{\rho_2}{\underline{k}_2} +\frac{\rho_1}{k_1}\right)-\sqrt{\left( \frac{\rho_2}{\underline{k}_2} -\frac{\rho_1}{k_1}\right)^2-\frac{4\rho_1}{\underline{k}_2\lambda^2}}}{2}},\ r_4=-r_3.}
	\end{array}\right.
	\end{equation*}
	The boundary conditions in \eqref{1.13} can be expressed by  $$M C=0$$
	where
	\begin{equation*}
	M=\begin{pmatrix}
	1&1&1&1\\
	e^{r_1}&e^{-r_1}&e^{r_3}&e^{-r_3}\\
	f\left(r_1\right)&-f\left(r_1\right)&f\left(r_3\right)&-f\left(r_3\right)\\
	f\left(r_1\right)e^{r_1}&-f\left(r_1\right)e^{-r_1}
	&f\left(r_3\right)e^{r_3}&-f\left(r_3\right)e^{-r_3}
	\end{pmatrix},\ C=\begin{pmatrix}
	c_1\\c_2\\c_3\\c_4
	\end{pmatrix},
	\end{equation*}
	and $f\left(r\right)=r^3-\frac{\rho_2}{\underline{k}_2} r \lambda^2.$ For shortness,  denote by  $f\left(r_1\right)=f_1$ and $f\left(r_3\right)=f_3.$ Then
	\begin{equation}\label{1.16}
	\left\{
	\begin{array}{ll}
	\displaystyle{f_1=\dfrac{r_1\lambda^2}{2}\left[\left(\frac{\rho_1}{k_1}- \frac{\rho_2}{\underline{k}_2} \right)+\sqrt{\left( \frac{\rho_2}{\underline{k}_2} -\frac{\rho_1}{k_1}\right)^2-\frac{4\rho_1}{\underline{k}_2\lambda^2}}\right],}\\ \\
	\displaystyle{f_3=\dfrac{r_3\lambda^2}{2}\left[\left(\frac{\rho_1}{k_1}- \frac{\rho_2}{\underline{k}_2} \right)-\sqrt{\left( \frac{\rho_2}{\underline{k}_2} -\frac{\rho_1}{k_1}\right)^2-\frac{4\rho_1}{\underline{k}_2\lambda^2}}\right],}
	\end{array}\right.
	\end{equation}
	and
	\begin{equation}\left\{
	\begin{array}{ll}
	\displaystyle{\left(f_1+f_3\right)^2=\lambda^6\frac{\rho_1}{k_1}\left(\frac{\rho_1}{k_1}- \frac{\rho_2}{\underline{k}_2} \right)^2-\lambda^4\frac{\rho_1}{\underline{k}_2}\left(\frac{3\rho_1}{k_1}- \frac{\rho_2}{\underline{k}_2} \right)+2\lambda^4\frac{\rho_1}{\underline{k}_2}\sqrt{\frac{\rho_1\rho_2}{k_1\underline{k}_2} \left(1+\frac{k_1}{\rho_2\lambda^2}\right)}\ ,}\\ \\
	\displaystyle{\left(f_1-f_3\right)^2=\lambda^6\frac{\rho_1}{k_1}\left(\frac{\rho_1}{k_1}- \frac{\rho_2}{\underline{k}_2} \right)^2-\lambda^4\frac{\rho_1}{\underline{k}_2}\left(\frac{3\rho_1}{k_1}- \frac{\rho_2}{\underline{k}_2} \right)-2\lambda^4\frac{\rho_1}{\underline{k}_2}\sqrt{\frac{\rho_1\rho_2}{k_1\underline{k}_2} \left(1+\frac{k_1}{\rho_2\lambda^2}\right)}\ .}
	\end{array}\right. \label{f1f2}
	\end{equation}
	Therefore, using \eqref{1.16} and \eqref{f1f2}, we get 
	\begin{equation*}
	\begin{array}{llll}
	\displaystyle{det\left(M\right)}&=& -2\left(f_1-f_3\right)^2\cosh\left(r_1+r_3\right)+2\left(f_1+f_3\right)^2\cosh\left(r_1-r_3\right)-8 f_1 f_3\\ \\
	&=&\displaystyle{-4\lambda^6\frac{\rho_1}{k_1}\left(\frac{\rho_1}{k_1}- \frac{\rho_2}{\underline{k}_2} \right)^2\sinh\left(r_1\right)\sinh\left(r_3\right)+4\lambda^4\frac{\rho_1}{\underline{k}_2}\left(\frac{3\rho_1}{k_1}- \frac{\rho_2}{\underline{k}_2} \right)\sinh\left(r_1\right)\sinh\left(r_3\right)}\\ \\
	&&\displaystyle{-8\lambda^4\frac{\rho_1}{\underline{k}_2}\sqrt{\frac{\rho_1\rho_2}{k_1\underline{k}_2} \left(1+\frac{k_1}{\rho_2\lambda^2}\right)}\cosh\left(r_1\right)\cosh\left(r_3\right)-8\lambda^4\frac{\rho_1}{\underline{k}_2}\sqrt{\frac{\rho_1\rho_2}{k_1\underline{k}_2} \left(1+\frac{k_1}{\rho_2\lambda^2}\right)}\ .}
	\end{array}
	\end{equation*}
	Equation \eqref{1.13}  admits a non trivial solution if and only if $\displaystyle{det\left(M\right)}=0$; i.e, if and only if the
	eigenvalues of $\mathcal{A}_1$ are roots of  
	the function $F$ defined by:
	\begin{equation} \label{characeq}
	\begin{array}{lll}
	F(\lambda)=& \displaystyle{\frac{\lambda^6}{k_1}\left(\frac{\rho_1}{k_1}- \frac{\rho_2}{\underline{k}_2} \right)^2\sinh\left(r_1\right)\sinh\left(r_3\right)-\frac{\lambda^4}{\underline{k}_2}\left(\frac{3\rho_1}{k_1}- \frac{\rho_2}{\underline{k}_2} \right)\sinh\left(r_1\right)\sinh\left(r_3\right)}\\ \\
	&+\displaystyle{\frac{2\lambda^4}{\underline{k}_2}\sqrt{\frac{\rho_1\rho_2}{k_1\underline{k}_2} \left(1+\frac{k_1}{\rho_2\lambda^2}\right)}\cosh\left(r_1\right)\cosh\left(r_3\right)+\frac{2\lambda^4}{\underline{k}_2}\sqrt{\frac{\rho_1\rho_2}{k_1\underline{k}_2} \left(1+\frac{k_1}{\rho_2\lambda^2}\right)}\ }.
	\end{array}
	\end{equation}
	\begin{lem}\label{realpartbdd}
		Let $\lambda\in\mathbb{C}$ be an eigenvalue of $\mathcal{A}_1$. Then $\Re(\lambda)$ is bounded.
	\end{lem}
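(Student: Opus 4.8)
The plan is to exploit the contraction–semigroup dissipation identity together with the explicit form of the memory variable $v^5$ of an eigenvector; this turns the whole estimate into a single scalar inequality for the kernel $g$, which is then read off from {\rm (H)} and~({$\text{H}'$}). Concretely, for the reduced operator $\mathcal{A}_1$ one has the exact analogue of \eqref{eqq2.8},
\[
\Re\langle\mathcal{A}_1 U,U\rangle_{\mathcal{H}_1}=\frac{1}{2}\int_0^1\int_0^{+\infty}g'(s)\,\bigl|v^5_x\bigr|^2\,ds\,dx,\qquad U\in D(\mathcal{A}_1).
\]
Taking $U$ to be an eigenvector for the eigenvalue $\lambda$ with $\|U\|_{\mathcal{H}_1}=1$ gives $\Re\langle\mathcal{A}_1U,U\rangle_{\mathcal{H}_1}=\Re(\lambda)$, and inserting $v^5=v^2\bigl(1-e^{-\lambda s}\bigr)$ from \eqref{1.6} (so that $v^5_x=v^2_x(1-e^{-\lambda s})$) yields
\[
\Re(\lambda)=\frac{1}{2}\,\|v^2_x\|^2\int_0^{+\infty}g'(s)\,\bigl|1-e^{-\lambda s}\bigr|^2\,ds .
\]
Since $g'\le 0$ this already re-proves $\Re(\lambda)\le 0$; only a matching lower bound remains.

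The crux is the pointwise kernel estimate
\[
-g'(s)\le \frac{c_2}{c}\,g(s),\qquad s\ge 0 ,
\]
which I would establish from the two differential inequalities in {\rm (H)} and~({$\text{H}'$}) as follows. First $g(s)>0$ for all $s$ (if $g$ vanished somewhere it would vanish on a half-line by monotonicity and $g'\le -cg$, and then differentiability of $g$ at the edge of its support would clash with the Lipschitz bound below). Next, combining $|g''|\le c_2 g$ with $-g'\ge cg>0$ gives $\bigl|\tfrac{d}{ds}\ln(-g'(s))\bigr|=|g''(s)|/(-g'(s))\le c_2/c$, hence $-g'(\sigma)\ge -g'(s)\,e^{-(c_2/c)(\sigma-s)}$ for $\sigma\ge s$; integrating in $\sigma$ over $(s,+\infty)$ and using $g(+\infty)=0$ yields $g(s)=\int_s^{+\infty}\bigl(-g'(\sigma)\bigr)\,d\sigma\ge \tfrac{c}{c_2}\bigl(-g'(s)\bigr)$, which is the claim. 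This is the one place where the new second assumption of~({$\text{H}'$}) is used, and I expect it to be the main obstacle: the passage to the limit $s\to+\infty$ and the handling of possible zeros of $g$ require care.

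With this in hand the conclusion is immediate:
\begin{align*}
-\Re(\lambda)=\frac{1}{2}\,\|v^2_x\|^2\int_0^{+\infty}\bigl(-g'(s)\bigr)\bigl|1-e^{-\lambda s}\bigr|^2\,ds
&\le \frac{c_2}{2c}\,\|v^2_x\|^2\int_0^{+\infty}g(s)\,\bigl|1-e^{-\lambda s}\bigr|^2\,ds\\
&=\frac{c_2}{2c}\,\|v^5\|_g^2\ \le\ \frac{c_2}{2c},
\end{align*}
since $\|v^5\|_g^2\le\|U\|_{\mathcal{H}_1}^2=1$. Hence every eigenvalue of $\mathcal{A}_1$ satisfies $-\tfrac{c_2}{2c}\le\Re(\lambda)\le 0$, which is the assertion. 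An equivalent route bypasses the energy identity: as $v^5=v^2(1-e^{-\lambda s})$ must belong to $L^2_g(\mathbb{R}_+,H^1_0)$ and $v^2_x\not\equiv 0$ (else $U=0$), the integral $\int_0^{+\infty}g(s)|1-e^{-\lambda s}|^2\,ds$ is finite, while the lower bound $g(s)\ge g(0)e^{-(c_2/c)s}$ (again a consequence of $-g'\le \tfrac{c_2}{c}g$) together with $|1-e^{-\lambda s}|\ge\tfrac{1}{2}e^{-\Re(\lambda)s}$ for $s$ large would force this integral to diverge once $-\Re(\lambda)\ge c_2/(2c)$.
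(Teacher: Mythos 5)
Your argument is correct, and it takes a genuinely different route from the paper's. The paper's own proof multiplies the reduced eigenvalue equations \eqref{1.7}--\eqref{1.8} by $-\rho_1\overline{\varphi}$ and $-\rho_2\overline{\psi}$, integrates, and uses $\|U\|_{\mathcal{H}_1}=1$ to infer that $\int_0^{+\infty}g(s)e^{-\lambda s}\,ds$ is finite, hence $g(s)e^{-\Re(\lambda)s}\to 0$; dissipativity gives $\Re(\lambda)\le 0$, and the lower bound $-a\le\Re(\lambda)$ is then asserted, implicitly relying on the fact that {\rm (H)}--({$\text{H}'$}) prevent $g$ from decaying faster than an exponential. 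You instead combine the dissipation identity (the analogue of \eqref{eqq2.8} for $\mathcal{A}_1$) with the explicit eigen-form $v^5=v^2\left(1-e^{-\lambda s}\right)$ from \eqref{1.6}, and you make the kernel estimates $-g'\le (c_2/c)\,g$ and $g(s)\ge g(0)e^{-(c_2/c)s}$ explicit; this buys a quantitative bound $-c_2/(2c)\le\Re(\lambda)\le 0$ rather than mere boundedness, and it supplies exactly the ingredient the paper leaves implicit (finiteness of $\int_0^{+\infty}g(s)e^{-\Re(\lambda)s}\,ds$ alone cannot bound $\Re(\lambda)$ without a lower bound on the decay of $g$), so your closing ``alternative route'' is essentially a rigorized version of the paper's own argument. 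Two small points of care, both of which you flag: the pointwise bound $-g'\le(c_2/c)g$ needs $g>0$ everywhere, which does follow from {\rm (H)}--({$\text{H}'$}) (if $g(s_0)=0$ with $g>0$ to the left, then $-g'(s)=\int_s^{s_0}g''\,d\sigma\le c_2(s_0-s)\,g(s)$ near $s_0$, contradicting $-g'\ge c\,g$), and the identity $g(s)=\int_s^{+\infty}(-g')\,d\sigma$ uses that $g$ is monotone and everywhere differentiable with $g(+\infty)=0$, which is standard under {\rm (H)}. With these remarks your proof is complete and, if anything, tighter than the one in the paper.
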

	\begin{proof}
		Multiplying \eqref{1.7} and \eqref{1.8} by $-\rho_1\overline{\varphi}$, and $-\rho_2\overline{\psi}$ respectively, and integrating their sum, we get
		\begin{equation*}
		\rho_1\left\|\lambda\varphi\right\|^2+\rho_2\left\|\lambda\psi\right\|^2+ k_1\left\|\varphi_x+\psi\right\|^2+k_2\left\|\psi_x\right\|^2-\left\|\psi_x\right\|^2\int_0^{+\infty}g\left(s\right)e^{-\lambda s}ds=0.
		\end{equation*}
		Since  $\left\|U\right\|_{\mathcal{H}_1}=1$ then $\rho_1\left\|\lambda\varphi\right\|^2+\rho_2\left\|\lambda\psi\right\|^2+ k_1\left\|\varphi_x+\psi\right\|^2+k_2\left\|\psi_x\right\|^2$ and $\left\|\psi_x\right\|^2$ are bounded. Therefore
		\begin{equation}\label{n1.9}
		\int_0^{+\infty}g\left(s\right)e^{-\lambda s}ds<+\infty.
		\end{equation}
		Hence,
		$$\lim_{s\to +\infty}g\left(s\right)e^{-\Re(\lambda) s}=0.$$
		Since  $ \mathcal{A}_1$ is dissipative in $\mathcal{H}_1 $ then $\Re(\lambda)\leq0$ and consequentially there exists constant $a>0$ such that 
		$$
		-a\leq\Re(\lambda)<0
		$$
		and hence the proof is complete.
	\end{proof}
	\begin{prop} \label{asymp}
		Assume that hypothesis {\rm (H)} and ({$\text{H}'$}) are satisfied. Then, there exist $n_0,n'_0\in \mathbb{N}$ sufficiently large such that 
		\begin{equation}
		\sigma\left(\mathcal{A}_1\right)\supset \tilde{\sigma_0}\cup \tilde{\sigma_1},
		\end{equation}
		where $\tilde{\sigma_0}\cup \tilde{\sigma_1}$ is the set of eigenvalues of the operator $\mathcal{A}_1$ such that 
		\begin{equation}
		\tilde{\sigma_1}=\left\{\tilde{\lambda}_j^{(0)},\tilde{\lambda}_j^{(1)}\right\}_{j\in J},\quad \tilde{\sigma_0}=\left\{\lambda^{\left(0\right)}_n,\ \lambda^{\left(1\right)}_{n'}\right\}_{
			\begin{array}{ll}
			n,\ n'\in\mathbb{Z}\\
			\left|n\right|\geq n_0,\ \left|n'\right|\geq n_0'
			\end{array}
		},\quad
		\tilde{\sigma_0}\cap \tilde{\sigma_1}=\emptyset,
		\end{equation}
		where $J$ is a finite set. Moreover, $\lambda_n^{(0)}$ and $\lambda_n^{(1)}$ are simple and satisfies the following asymptotic behavior
		\begin{equation}\label{branch1}
		\displaystyle{\lambda^{\left(0\right)}_n=\mathit{i} n\pi\sqrt{\frac{k_2}{\rho_2}}-\frac{ g\left(0\right)}{2k_2}+o\left(1\right)}, \ \ \forall \ |n|\geq n_0
		\end{equation}
		and
		\begin{equation}\label{branch2}
		\displaystyle{\lambda^{\left(1\right)}_{n'}=\mathit{i} n'\pi\sqrt{\frac{k_1}{\rho_1}}+o\left(1\right)}, \ \ \forall\  |n'|\geq n'_0.
		\end{equation}
	\end{prop}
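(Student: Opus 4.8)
The plan is to identify the eigenvalues of $\mathcal{A}_1$ with the zeros of the characteristic function $F$ of \eqref{characeq} and to locate the zeros of large modulus by an asymptotic expansion followed by an application of Rouch\'e's theorem. By Lemma \ref{realpartbdd}, every eigenvalue $\lambda$ satisfies $-a\le\Re(\lambda)<0$, so along any sequence of eigenvalues with $|\lambda|\to+\infty$ one has $|\Im(\lambda)|\to+\infty$ while $\Re(\lambda)$ stays in a fixed compact interval; this is the regime in which all expansions below take place, and in which $\Re(r_1),\Re(r_3)$ remain bounded so that $\sinh(r_j)$ and $\cosh(r_j)$ stay bounded.

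First I would expand $\underline{k}_2=k_2-\int_0^{+\infty}g(s)e^{-\lambda s}\,ds$. Because $g$ decays exponentially by {\rm (H)} (with $|\Re(\lambda)|$ below the decay rate, as the proof of Lemma \ref{realpartbdd} shows), the function $g(s)e^{-\Re(\lambda)s}$ lies in $L^{1}(\mathbb{R}_+)$, and one integration by parts together with the Riemann--Lebesgue lemma gives $\int_0^{+\infty}g(s)e^{-\lambda s}\,ds=\frac{g(0)+o(1)}{\lambda}$; a second integration by parts --- legitimate because $|g''|\le c_2 g$ by {\rm (H$'$)}, which forces $g'$ to decay exponentially and $\lim_{s\to0}g'(s)$ to exist --- upgrades this to $\frac{g(0)}{\lambda}+O(1/\lambda^{2})$, hence $\underline{k}_2=k_2-\frac{g(0)}{\lambda}+O(1/\lambda^{2})$. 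Substituting into the formulas for $r_1$ and $r_3$ and expanding the nested radicals --- here the condition $\frac{\rho_1}{k_1}\neq\frac{\rho_2}{k_2}$ of {\rm (H$'$)} is essential, since it keeps the discriminant $\bigl(\frac{\rho_2}{\underline{k}_2}-\frac{\rho_1}{k_1}\bigr)^{2}-\frac{4\rho_1}{\underline{k}_2\lambda^{2}}$ bounded away from $0$ and the two characteristic roots genuinely separated --- I obtain, after relabelling so that $r_1$ carries $\sqrt{\rho_2/k_2}$ (the case $\frac{\rho_2}{k_2}<\frac{\rho_1}{k_1}$ being symmetric),
\[
r_1=\lambda\sqrt{\rho_2/k_2}+\frac{g(0)}{2k_2}\sqrt{\rho_2/k_2}+O(1/\lambda),\qquad r_3=\lambda\sqrt{\rho_1/k_1}+O(1/\lambda).
\]

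Next I would read off the dominant part of $F$. Since $\bigl(\frac{\rho_1}{k_1}-\frac{\rho_2}{\underline{k}_2}\bigr)^{2}=\bigl(\frac{\rho_1}{k_1}-\frac{\rho_2}{k_2}\bigr)^{2}+O(1/\lambda)\neq0$ and the hyperbolic factors are bounded, \eqref{characeq} gives
\[
F(\lambda)=\frac{\lambda^{6}}{k_1}\left(\frac{\rho_1}{k_1}-\frac{\rho_2}{k_2}\right)^{2}\sinh(r_1)\sinh(r_3)+O(\lambda^{5}),
\]
so any zero of $F$ of large modulus must satisfy $\sinh(r_1)\sinh(r_3)=O(1/\lambda)$, and hence lie close either to a zero of $\sinh(r_1)$ or to a zero of $\sinh(r_3)$. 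By the expansions above, the zeros of $\sinh(r_1)$ are at $\lambda=\mathit{i}n\pi\sqrt{k_2/\rho_2}-\frac{g(0)}{2k_2}+o(1)$ and those of $\sinh(r_3)$ at $\lambda=\mathit{i}n'\pi\sqrt{k_1/\rho_1}+o(1)$. Around each such point I would take a circle of radius $\varrho_n\to0$ with $\varrho_n\gg 1/|\lambda|$; on it the leading term of $F$ has modulus at least $c|\lambda|^{6}\varrho_n$ times the modulus of the other (non-vanishing) hyperbolic sine, hence dominates the $O(\lambda^{5})$ remainder, and Rouch\'e's theorem yields exactly one simple zero of $F$ inside, which --- refining once more via $F(\lambda)=0$ --- is $\lambda_n^{(0)}$ of \eqref{branch1}, respectively $\lambda_{n'}^{(1)}$ of \eqref{branch2}, for $|n|\ge n_0$ and $|n'|\ge n_0'$ with $n_0,n_0'$ large. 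Since $|F|$ stays $\ge c|\lambda|^{6}$ away from the zeros of $\sinh(r_1)\sinh(r_3)$, no large eigenvalue escapes this list; the remaining eigenvalues lie in a bounded set and form the finite family $\tilde{\sigma}_1=\{\tilde{\lambda}_j^{(0)},\tilde{\lambda}_j^{(1)}\}_{j\in J}$, and discarding finitely many further indices from $\tilde{\sigma}_0$ makes it disjoint from $\tilde{\sigma}_1$.

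The hard part will be the Rouch\'e estimate when a zero of $\sinh(r_1)$ and a zero of $\sinh(r_3)$ are very close, which happens whenever $n\sqrt{k_2/\rho_2}$ is nearly an integer multiple of $\sqrt{k_1/\rho_1}$: on the small circle around one of them the other hyperbolic sine is no longer bounded below, so the inequality $|F-F_0|<|F_0|$ has to be recovered by tuning $\varrho_n$, or else the two nearby zeros must be enclosed in a single larger contour on which both $\sinh(r_1)$ and $\sinh(r_3)$ are bounded below, two zeros of $F$ counted there by the argument principle, and their simplicity together with the asymptotics \eqref{branch1}--\eqref{branch2} recovered by a separate perturbative argument. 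This is also where the quantitative $O(1/\lambda)$ remainders --- hence the second integration by parts made possible by $|g''|\le c_2 g$ --- are genuinely used, since they allow $\varrho_n$ to shrink while still beating $1/|\lambda|$. Everything else is bookkeeping; note in particular that the branch \eqref{branch2} satisfies $\Re(\lambda_{n'}^{(1)})\to0^{-}$, which is the feature that will defeat uniform stability in Theorem \ref{nonuniform}.
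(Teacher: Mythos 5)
Your proposal follows essentially the same route as the paper's proof: the expansion of $\underline{k}_2$ by two integrations by parts (where $|g''|\le c_2 g$ from (H$'$) controls the remainder), the resulting expansions of $r_1,r_3$, the reduction of the characteristic function to $F(\lambda)=\frac{\lambda^{6}}{k_1}\bigl(\frac{\rho_1}{k_1}-\frac{\rho_2}{k_2}\bigr)^{2}\sinh(r_1)\sinh(r_3)+O(\lambda^{5})$, and Rouch\'e's theorem to place the large eigenvalues near the zeros of the two hyperbolic sines, which gives exactly the two branches \eqref{branch1}--\eqref{branch2}. If anything, you are more explicit than the paper about the delicate Rouch\'e step when zeros of $\sinh(r_1)$ and $\sinh(r_3)$ nearly coincide (the paper simply invokes Rouch\'e after a Gaussian-elimination argument ruling out exact resonances among eigenvalues), so the proposal is sound and matches the paper's argument.
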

	\begin{proof} The proof is divided into three steps. Step 1 and Step 2 furnish an asymptotic development of the characteristic
		equation for large $\lambda$. Step 3 gives a limited development of the large eigenvalues $\lambda$. \\
		\noindent {\bf Step 1.} In this step, we prove the following asymptotic behavior estimate 
		\begin{equation}\label{kdash2}
		\frac{1}{\underline{k}_2}=\frac{1}{k_2}+\frac{g\left(0\right)}{k_2^2\lambda}+O\left(\frac{1}{\lambda^2}\right).\end{equation}
		Indeed, integration by parts yields
		\begin{equation}\label{ibp}\underline{k}_2=k_2-\int_0^{+\infty}g\left(s\right)e^{-\lambda s}ds=k_2-\frac{g\left(0\right)}{\lambda}-\frac{g'\left(0\right)}{\lambda^2}-\frac{1}{\lambda^2}\int_0^{+\infty}g''\left(s\right)e^{-\lambda s}ds. \end{equation}
		From hypothesis ($\text{H}'$), since  $\left|g''\left(s\right)\right|\leq c_2 g\left(s\right)$, then
		\begin{equation}\label{g"}
		\left|\int_0^{+\infty}g''\left(s\right)e^{-\lambda s}ds\right|\leq c_2\int_0^{+\infty}g\left(s\right)e^{\Re(\lambda) s}ds.
		\end{equation}
		on the other hand, since
		$$\int_0^1\int_0^{+\infty}g\left(s\right)\left|\eta_x\right|^2ds\ dx<+\infty,$$
		then, from \eqref{1.6} and \eqref{g"}, we get
		\begin{equation}\label{g"1}
		\int_0^{+\infty}g''\left(s\right)e^{-\lambda s}ds=O\left(1\right).
		\end{equation}
		Finally, \eqref{ibp} and \eqref{g"1} yield \eqref{kdash2}.\\
		
		\noindent {\bf Step 2.}	In this step, we furnish an asymptotic development of the function $F(\lambda)$ for large $\lambda$. Assume that $\frac{\rho_1}{k_1}\neq\frac{\rho_2}{k_2}$, then we have
		\begin{equation}\label{1.17}
		\left\{\begin{array}{l}
		\displaystyle{r_1=\lambda\sqrt{\frac{\frac{\rho_2}{k_2}+\frac{\rho_1}{k_1}+\left|\frac{\rho_2}{k_2}-\frac{\rho_1}{k_1}\right|}{2}}+\frac{\rho_2 g\left(0\right)}{2\sqrt{2}k_2^2}\frac{1+sign\left(\frac{\rho_2}{k_2}-\frac{\rho_1}{k_1}\right)}{\sqrt{\frac{\rho_2}{k_2}+\frac{\rho_1}{k_1}+\left|\frac{\rho_2}{k_2}-\frac{\rho_1}{k_1}\right|}}+O\left(\frac{1}{\lambda}\right),}\\ \\
		\displaystyle{r_3=\lambda\sqrt{\frac{\frac{\rho_2}{k_2}+\frac{\rho_1}{k_1}-\left|\frac{\rho_2}{k_2}-\frac{\rho_1}{k_1}\right|}{2}}+\frac{\rho_2g\left(0\right)}{2\sqrt{2}k_2^2}\frac{1-sign\left(\frac{\rho_2}{k_2}-\frac{\rho_1}{k_1}\right)}{\sqrt{\frac{\rho_2}{k_2}+\frac{\rho_1}{k_1}-\left|\frac{\rho_2}{k_2}-\frac{\rho_1}{k_1}\right|}}+O\left(\frac{1}{\lambda}\right),}
		\end{array}\right.
		\end{equation}
		where $sign\left(\frac{\rho_2}{k_2}-\frac{\rho_1}{k_1}\right)=\dfrac{\left|\frac{\rho_2}{k_2}-\frac{\rho_1}{k_1}\right|}{\frac{\rho_2}{k_2}-\frac{\rho_1}{k_1}}$.  If $sign\left(\frac{\rho_2}{k_2}-\frac{\rho_1}{k_1}\right)=1$,  then \eqref{1.17} is equivalent to
		\begin{equation}\label{1.18}
		\left\{\begin{array}{l}
		\displaystyle{r_1=\lambda\sqrt{\frac{\rho_2}{k_2}}+\frac{ g\left(0\right)}{2k_2}{\sqrt{\frac{\rho_2}{k_2}}}+O\left(\frac{1}{\lambda}\right),}\\ \\
		\displaystyle{r_3=\lambda\sqrt{\frac{\rho_1}{k_1}}+O\left(\frac{1}{\lambda}\right).}
		\end{array}\right.
		\end{equation}
		If $sign\left(\frac{\rho_2}{k_2}-\frac{\rho_1}{k_1}\right)=-1,$ then \eqref{1.17} is equivalent to
		\begin{equation}\label{1.19}
		\left\{\begin{array}{l}
		\displaystyle{r_1=\lambda\sqrt{\frac{\rho_1}{k_1}}+O\left(\frac{1}{\lambda}\right).}
		\\ \\
		\displaystyle{r_3=\lambda\sqrt{\frac{\rho_2}{k_2}}+\frac{ g\left(0\right)}{2k_2}{\sqrt{\frac{\rho_2}{k_2}}}+O\left(\frac{1}{\lambda}\right).}
		\end{array}\right.
		\end{equation}
		In the sequel, we suppose that \eqref{1.18} holds since the analysis follows similarly. Now, inserting \eqref{1.18} in \eqref{characeq}  and using   Lemma \ref{realpartbdd}, we get
		\begin{equation}\label{Flambda}
		F(\lambda)=\frac{\lambda^6}{k_1}\left(\frac{\rho_1}{k_1}- \frac{\rho_2}{k_2}\right)^2\sinh\left(r_1\right)\sinh\left(r_3\right)+O(\lambda^5).
		\end{equation}
		\noindent {\bf Step 3.} In this step, we perform a limited development of of the large eigenvalues of the operator $\AA_1$. Let $\lambda$ be a large eigenvalue of $\mathcal{A}_1$, then from \eqref{Flambda}, $\lambda$ is large root of the following asymptotic equation  
		\begin{equation}\label{aeh}
		h(\lambda)=h_0\left(\lambda\right)+O\left(\frac{1}{\lambda}\right)=0, 
		\end{equation}
		where $h_0\left(\lambda\right)=\sinh\left(r_1\right)\sinh\left(r_3\right).$ Now, we prove that
		\begin{equation*}
		h_0\left(\lambda\right)=0 \ \textrm{ if and only if } \ r_1=\mathit{i} n\pi\ \text{ and } r_3=\mathit{i} n'\pi,\quad n,n'\in\mathbb{Z}.
		\end{equation*}
		Indeed, Suppose that
		$$r_1=\mathit{i} n\pi,\ \text{and } r_3\neq\mathit{i} n'\pi,\quad n,n'\in\mathbb{Z}.$$
		Then
		\begin{equation*}
		M=\begin{pmatrix}
		1&1&1&1\\
		\left(-1\right)^n&\left(-1\right)^n&e^{r_3}&e^{-r_3}\\
		f_1&-f_1&f_3&-f_3\\
		\left(-1\right)^nf_1&-\left(-1\right)^nf_1
		&f_3e^{r_3}&-f_3e^{-r_3}
		\end{pmatrix}.
		\end{equation*}
		Using Gaussian elimination,   $M$ is equivalent to the following matrix, denoted by
		\begin{equation}\label{1.20}
		\widetilde{M}=\begin{pmatrix}
		1&1&1&1\\
		0&0&e^{r_3}-\left(-1\right)^n&e^{-r_3}-\left(-1\right)^n\\
		f_1&-f_1&f_3&-f_3\\
		0&0&f_3\left(e^{r_3}-\left(-1\right)^n\right)&-f_3\left(e^{-r_3}-\left(-1\right)^n\right)
		\end{pmatrix}.
		\end{equation}
		Hence,
		\begin{equation*}
		\left\{
		\begin{array}{rl}
		\left(e^{r_3}-\left(-1\right)^n\right) c_3+\left(e^{-r_3}-\left(-1\right)^n\right) c_4&=0\\
		f_3\left(e^{r_3}-\left(-1\right)^n\right)c_3-f_3\left(e^{-r_3}-\left(-1\right)^n\right)c_4&=0.
		\end{array}
		\right.
		\end{equation*}
		From \eqref{1.16}, we can check that $f_1\neq0$ and $f_3\neq0$ for $\lambda$ large enough. Since $r_3\neq\mathit{i} n'\pi$ for all $n'\in\mathbb{Z}$, then
		$$c_3=c_4=0.$$
		From \eqref{1.20},  we have
		\begin{equation*}
		\left\{
		\begin{array}{ll}
		c_1+c_2=0\\
		f_1c_1-f_1c_2=0.
		\end{array}
		\right.
		\end{equation*}
		Since $f_1\neq0$, we get
		$$c_1=c_2=0 \ \ \textrm{and} \ \ v^2=0$$
		which is a contradiction with $\|U\|_{\mathcal{H}_1}=1$. Similarly if
		$$r_1\neq\mathit{i} n\pi\ \text{and } r_3=\mathit{i} n'\pi,\quad n,n'\in\mathbb{Z}$$
		we get $v^2=0$. We conclude that 
		$$h_0(\lambda)=0 \,\,\, \Leftrightarrow \,\,\, r_1=\mathit{i} n\pi\ \text{and } r_3=\mathit{i} n'\pi,\quad n,n'\in\mathbb{Z}.$$ Then from asymptotic equation \eqref{1.18}, the large roots of $h_0$ satisfy the following asymptotic equations
		\begin{equation}\label{branch11}
		\displaystyle{\mu^{\left(0\right)}_n=\mathit{i} n\pi\sqrt{\frac{k_2}{\rho_2}}-\frac{ g\left(0\right)}{2k_2}+O\left(\frac{1}{n}\right)}, \ \ \forall \ |n|\geq n_0
		\end{equation}
		and
		\begin{equation}\label{branch22}
		\displaystyle{\mu^{\left(1\right)}_{n'}=\mathit{i} n'\pi\sqrt{\frac{k_1}{\rho_1}}+O\left(\frac{1}{n}\right)}, \ \ \forall\  |n'|\geq n'_0.
		\end{equation}
		Next, with the help of   Rouch\'{e}'s Theorem and using the asymptotic equation \eqref{aeh}, it is easy to see that the large roots of $h$, $\lambda_n^{(0)}$ and $\lambda_n^{(1)}$,  are closed to those of  $h_0$. Thus the proof is complete.\\\\
	\end{proof}
	

	\noindent\textbf{\emph{Proof of Theorem \ref{nonuniform}}} From Proposition \ref{asymp}, the operator $\AA_1$ has two branches of eigenvalues, the energy corresponding to the first branch $\lambda_n^{(0)}$ decays exponentially and the energy corresponding to the second branch of eigenvalues $\lambda_{n^\prime}^{(1)}$ has no exponential decaying. Therefore the total energy of the  Timoshenko system \eqref{timo}-\eqref{dirich} has no exponential decaying when $\frac{\rho_1}{k_1}\neq\frac{\rho_2}{k_2}$. The proof is thus complete.



\subsection{Polynomial stability in the general case}\label{se5}

\indent In this part, we prove that the system \eqref{eqq2.1}-\eqref{eqq2.3} is polynomially stable if \eqref{eqq3.1} is not satisfied.
We prove  the following Theorems.
\begin{thm}\label{TH4.2}
	Under hypothesis {\rm (H)},  if \begin{equation}\label{l4}
	\frac{\rho_1}{k_1}\neq\frac{\rho_2}{k_2} \ \textrm{ and } \ k_1\neq k_3,
	\end{equation}
	then there exists  $c>0$ such that for every $U^0\in D\left(\mathcal{A}\right)$, we have
	\begin{equation}\label{eqq4.40}
	E\left(t\right)\leq \frac{c}{\sqrt{t}}\left\|U^0\right\|^2_{D\left(\mathcal{A}\right)},\quad\ t>0.
	\end{equation}
\end{thm}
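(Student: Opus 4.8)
The plan is to combine the frequency domain characterization of polynomial stability due to Borichev and Tomilov with the multiplier machinery already developed in Section \ref{se3}. Since $e^{t\mathcal{A}}$ is a bounded $C_0$-semigroup and $i\mathbb{R}\subseteq\rho(\mathcal{A})$ by Lemma \ref{Th3.1} and Lemma \ref{compactness}, the energy estimate \eqref{eqq4.40} is equivalent to the resolvent bound
\begin{equation*}
\limsup_{|\lambda|\to+\infty}\frac{1}{|\lambda|^{4}}\left\|\left(i\lambda Id-\mathcal{A}\right)^{-1}\right\|_{\mathcal{L}(\mathcal{H})}<+\infty .
\end{equation*}
Indeed $E(t)=\tfrac12\|e^{t\mathcal{A}}U^{0}\|_{\mathcal{H}}^{2}$, so \eqref{eqq4.40} amounts to $\|e^{t\mathcal{A}}U^{0}\|_{\mathcal{H}}\le c\,t^{-1/4}\|U^{0}\|_{D(\mathcal{A})}$, which by the Borichev--Tomilov theorem corresponds exactly to the exponent $4$ above. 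I would establish this resolvent bound by contradiction, in the spirit of Section \ref{se3}: assume there are $(\lambda_{n})_{n}\subseteq\mathbb{R}$ with $|\lambda_{n}|\to+\infty$ and $(U_{n})_{n}\subseteq D(\mathcal{A})$ with $\|U_{n}\|_{\mathcal{H}}=1$ and $\lambda_{n}^{4}\left(i\lambda_{n}U_{n}-\mathcal{A}U_{n}\right)\to0$ in $\mathcal{H}$, and deduce $\|U_{n}\|_{\mathcal{H}}=o(1)$, a contradiction. Writing $(i\lambda_{n}Id-\mathcal{A})U_{n}=F_{n}$ componentwise as in \eqref{eqq3.34}--\eqref{eqq3.40}, the data are now $o(\lambda_{n}^{-4})$ in their respective norms; as in Section \ref{se3} I drop the index $n$.

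The first step is the dissipation identity: taking the inner product of $i\lambda U-\mathcal{A}U$ with $U$ in $\mathcal{H}$ and using \eqref{eqq2.8}, hypothesis (H) and $\lambda^{4}(i\lambda U-\mathcal{A}U)\to0$, one obtains as in Lemma \ref{LE3.1}
\begin{equation*}
\int_{0}^{L}\int_{0}^{+\infty}g(s)\left|v^{7}_{x}\right|^{2}ds\,dx=o\!\left(\lambda^{-4}\right),\qquad\text{hence}\qquad\|v^{7}\|_{g}=o\!\left(\lambda^{-2}\right).
\end{equation*}
The second step treats the directly damped component $v^{2}$: multiplying \eqref{eqq3.40} by $\overline{v^{2}}$ in $L^{2}_{g}(\mathbb{R}_{+},H^{1}_{0})$ and \eqref{eqq3.38} by $\overline{v^{2}}$ in $L^{2}(0,L)$, and carefully tracking the powers of $\lambda$ produced by the sharper right-hand sides, one gets the quantitative analogues of Lemma \ref{LE3.2}, i.e.\ decay of $\|v^{2}_{x}\|$ and of $\|\lambda v^{2}\|$ at the appropriate negative powers of $\lambda$. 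It then remains to control the ``wave'' components $v^{1}$ (i.e.\ $\varphi$) and $v^{3}$ (i.e.\ $w$), namely $\|\lambda v^{1}\|$, $\|v^{1}_{x}\|$, $\|\lambda v^{3}\|$, $\|v^{3}_{x}\|$, together with the coupling terms $\|v^{1}_{x}+v^{2}+\mathrm{l}v^{3}\|$ and $\|v^{3}_{x}-\mathrm{l}v^{1}\|$.

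This last step is the core of the argument and is where \eqref{l4} enters. Following the scheme of Lemmas \ref{LE3.3}--\ref{LE3.5}, I would fix the cut-off $\varsigma$ and multiply \eqref{eqq3.37} and \eqref{eqq3.39} by suitable multipliers ($\varsigma\overline{v^{1}_{x}}$, $\varsigma\overline{v^{3}_{x}}$, $\varsigma\overline{v^{1}}$, $\varsigma\overline{v^{3}}$, and cross-multipliers built from $v^{2}_{x}$ and $v^{7}_{x}$ in the spirit of \eqref{eqq3.58}--\eqref{eqq3.65}), integrate by parts, and combine the resulting identities so as to cancel the cross terms. Since \eqref{l4} rules out the resonances $\frac{\rho_{1}}{k_{1}}=\frac{\rho_{2}}{k_{2}}$ and $k_{1}=k_{3}$, the algebraic factors multiplying the unknowns in these identities — the analogues of $\frac{\rho_{2}}{k_{2}}-\frac{\rho_{1}}{k_{1}}$ in \eqref{eqq3.60'} and of $k_{1}-k_{3}$ in the proof of Lemma \ref{LE3.5} — are nonzero and may be divided out; this is precisely what degenerates in the equal-speed case, and it forces the multiplier estimates to lose altogether a factor $\lambda^{4}$, whence the exponent $4$ in the resolvent bound and the rate $t^{-1/2}$ in \eqref{eqq4.40}. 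Once $\|U\|_{\mathcal{H}}=o(1)$ is obtained on an interior interval $[\alpha+\epsilon,\beta-\epsilon]$, Lemma \ref{LE3.3} propagates it to all of $(0,L)$ and the contradiction follows.

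The main obstacle is the one flagged in the remark following \eqref{DDD}: under fully Dirichlet boundary conditions the multipliers above are not compactly supported in the right way, so each integration by parts leaves pointwise boundary contributions at $x=0$ and $x=L$. These must be absorbed, typically by complementing the localized identities with a global multiplier of the form $q(x)=x-\tfrac12$ (or $x\,\varsigma_{1}$, as in the proof of Lemma \ref{LE3.3}) and by invoking the a priori bounds \eqref{eqq3.42}; controlling these boundary terms while simultaneously keeping exact track of the powers of $\lambda$, so that the total loss is exactly $\lambda^{4}$ and no worse, is the delicate technical point. I expect the same scheme, under the weaker hypothesis of Theorem \ref{TH4.3}, to lose only $\lambda^{2}$ rather than $\lambda^{4}$, which accounts for the faster rate $t^{-1}$ there.
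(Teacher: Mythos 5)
Your overall scheme coincides with the paper's: reduce \eqref{eqq4.40} to the resolvent bound $\left\|\left(i\lambda Id-\mathcal{A}\right)^{-1}\right\|_{\mathcal{L}(\mathcal{H})}=O\left(\lambda^{4}\right)$ via Borichev--Tomilov, argue by contradiction with $\lambda_n^{4}\left(i\lambda_n U_n-\mathcal{A}U_n\right)\to0$, estimate the damped components first, then the wave components through interior cut-off multipliers, and conclude with Lemma \ref{LE3.3}. There is, however, a genuine gap in your second step. Multiplying \eqref{eqq4.11} by $\overline{v^2}$ in $L^2_g\left(\mathbb{R}_+,H_0^1\right)$ does \emph{not} yield $\left\|v^2_x\right\|^2=o\left(\lambda^{-4}\right)$ in one pass: the memory term $\int_0^L\int_0^{+\infty}g\,v^7_{xs}\overline{v^2_x}\,ds\,dx$ is only controlled by $\left(\int_0^L\int_0^{+\infty}(-g')\left|v^7_x\right|^2ds\,dx\right)^{1/2}\left\|v^2_x\right\|$, so with $\left\|v^2_x\right\|=O(1)$ you only reach $o\left(\lambda^{-l/2}\right)=o\left(\lambda^{-2}\right)$; similarly the localized estimate for $v^1_x$ first comes out as $o\left(\lambda^{-(l/2-1)}\right)=o\left(\lambda^{-1}\right)$, which is not enough to kill the cross term against $\lambda^{-1}v^3_{xx}=O(1)$. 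The paper closes this by a bootstrapping induction (Lemmas \ref{LE4.2} and \ref{LE4.3}, with exponents $l_N=\frac{l}{2}\sum_{k=0}^{N}2^{-k}$, respectively $\frac{l-2}{2}\sum_{k=0}^{N}2^{-k}$), re-running the same multipliers with growing weights $\lambda^{l_N}$ and feeding each estimate back into the next; ``carefully tracking the powers'' without this iteration, or an equivalent device, stalls at half the required order.

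A second point: your account of where \eqref{l4} enters is inverted relative to the actual mechanism. The nonvanishing of $k_3-k_1$ is not a factor you divide out to your advantage; on the contrary, it leaves alive the cross term $(k_3-k_1)\Re\left\{\int_0^L\lambda \varsigma v^1_{x}\,\lambda^{-1}\overline{v^3_{xx}}\,dx\right\}$ in the identity of Lemma \ref{LE4.4}, and the only way to dispose of it is to have $\int_0^L\varsigma\left|v^1_x\right|^2dx=o\left(\lambda^{-2}\right)$ combined with $\left\|v^3_{xx}\right\|=O(\lambda)$ from \eqref{eqq4.15}; this is precisely what forces $l=4$ here, whereas when $k_1=k_3$ (Theorem \ref{TH4.3}) that term is absent and $l=2$ suffices. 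Likewise $\frac{\rho_1}{k_1}\neq\frac{\rho_2}{k_2}$ produces no factor to divide by: it simply makes the cancellation used in the equal-speed Lemma \ref{LE3.4} unavailable, so the term $\lambda^2\int_0^L\varsigma v^2_x\overline{v^1}dx$ must be absorbed by the strengthened decay of $v^2_x$, again consuming powers of $\lambda$. Your treatment of the boundary difficulty (interior cut-off plus the $x\varsigma_1$ multiplier of Lemma \ref{LE3.3}) is exactly the paper's device and is fine.
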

\begin{thm}\label{TH4.3}
	Under hypothesis {\rm (H)},  if
	\begin{equation}\label{l2}
	\frac{\rho_1}{k_1}\neq\frac{\rho_2}{k_2} \ \textrm{ and } \  k_1= k_3,
	\end{equation} then   there exists  $c>0$ such that for every $U^0\in D\left(\mathcal{A}\right)$, we have
	\begin{equation}\label{eqq4.41}
	E\left(t\right)\leq \frac{c}{t}\left\|U^0\right\|^2_{D\left(\mathcal{A}\right)},\quad\ t>0.
	\end{equation}
\end{thm}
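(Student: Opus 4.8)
\noindent\textbf{Proof strategy for Theorems \ref{TH4.2} and \ref{TH4.3}.}
The plan is to use the frequency domain characterization of polynomial decay of Borichev--Tomilov (refining Batty--Duyckaerts and Liu--Rao): since $e^{t\mathcal A}$ is a bounded $C_0$-semigroup and $i\mathbb R\subseteq\rho(\mathcal A)$ — which is already known from Lemma \ref{Th3.1} and Lemma \ref{compactness} — one has, for any $\ell>0$,
\[
\left\|e^{t\mathcal A}U^0\right\|_{\mathcal H}\le \frac{c}{t^{1/\ell}}\left\|U^0\right\|_{D(\mathcal A)}
\ \Longleftrightarrow\
\limsup_{|\lambda|\to+\infty}\frac{1}{|\lambda|^{\ell}}\left\|\left(i\lambda I-\mathcal A\right)^{-1}\right\|_{\mathcal L(\mathcal H)}<+\infty .
\]
Since $E(t)=\tfrac12\|e^{t\mathcal A}U^0\|_{\mathcal H}^2$, the rate $E(t)=O(t^{-1/2})$ of Theorem \ref{TH4.2} corresponds to $\ell=4$, and the rate $E(t)=O(t^{-1})$ of Theorem \ref{TH4.3} to $\ell=2$. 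So it suffices to prove the resolvent estimates
\[
\left\|\left(i\lambda I-\mathcal A\right)^{-1}\right\|_{\mathcal L(\mathcal H)}=O\!\left(|\lambda|^{4}\right)\ \text{ under }\eqref{l4},\qquad
\left\|\left(i\lambda I-\mathcal A\right)^{-1}\right\|_{\mathcal L(\mathcal H)}=O\!\left(|\lambda|^{2}\right)\ \text{ under }\eqref{l2}.
\]

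Both estimates I would establish by contradiction, in the spirit of Section \ref{se3}: assume there are $\lambda_n\in\mathbb R$, $|\lambda_n|\to+\infty$, and $U_n=(v^1_n,\dots,v^7_n)^{\mathsf T}\in D(\mathcal A)$ with $\|U_n\|_{\mathcal H}=1$ and $i\lambda_n U_n-\mathcal AU_n=F_n$, where $\|F_n\|_{\mathcal H}=o(|\lambda_n|^{-\ell})$ (with $\ell=4$, resp. $\ell=2$); dropping the index $n$, this gives the componentwise system \eqref{eqq3.34}--\eqref{eqq3.40} with right-hand sides $o(|\lambda|^{-\ell})$ in the relevant norms. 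Taking the inner product with $U$ and using \eqref{eqq2.8} together with $g'\le-cg$, the dissipation yields the sharpened version of Lemma \ref{LE3.1},
\[
\intspace\inttime g(s)\left|v^7_x\right|^2\,ds\,dx=o\!\left(|\lambda|^{-\ell}\right),
\]
and then, rerunning the computations of Lemma \ref{LE3.2} while carefully tracking powers of $\lambda$ (the cross terms with $\int g'v^7_x\overline{v^2_x}\,ds$ handled as in \eqref{eqq3.48}, using only $g'\le-cg$ and the existence of $\lim_{s\to0}\sqrt{g(s)}$), one obtains smallness of $\|v^2_x\|$ and of $\|\lambda v^2\|$ with explicit orders in $|\lambda|$.

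The core of the proof is to transfer this dissipation, which lives only on the shear variable $v^2$ and the memory variable $v^7$, onto $v^1$ and $v^3$; I would do this with the multiplier scheme of Lemmas \ref{LE3.4}--\ref{LE3.5}: multiply \eqref{eqq3.38} by a localized $\varsigma\,\overline{v^1_x}$ and \eqref{eqq3.37} by $\tfrac{\varsigma}{k_1}\big(\widetilde k_2\overline{v^2_x}+\int g(s)\overline{v^7_x}\,ds\big)$, subtract, and use \eqref{eqq3.40} to absorb the memory cross terms, producing an identity of the form
\[
k_1\int_0^L\varsigma\left|v^1_x\right|^2dx+\lambda^2 k_2\Big(\tfrac{\rho_2}{k_2}-\tfrac{\rho_1}{k_1}\Big)\int_0^L\varsigma\, v^2_x\,\overline{v^1}\,dx=\text{(lower order terms)} .
\]
Because $\tfrac{\rho_1}{k_1}\ne\tfrac{\rho_2}{k_2}$, the term $\lambda^2\int\varsigma v^2_x\overline{v^1}$ does not vanish and must be dominated using the $v^2$-estimates; this is exactly where powers of $\lambda$ are lost and why the decay is polynomial, and I expect the loss to be $|\lambda|^{4}$ under \eqref{l4}. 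When $k_1=k_3$, the $v^1$--$v^3$ coupling in \eqref{eqq3.37}, \eqref{eqq3.39} simplifies precisely as in Lemma \ref{LE3.5} — the $\mathrm{l}(k_1+k_3)$ contributions combine and the top-order $v^1_{xx}$, $v^3_{xx}$ terms cancel after taking the real part of the sum — so only one bad cross term survives and the loss drops to $|\lambda|^{2}$, giving Theorem \ref{TH4.3}. Throughout, since the boundary conditions are fully Dirichlet, I would work on an interior interval, control the non-vanishing boundary terms via the $\phi=x\varsigma_1$ multiplier identity \eqref{eqq3.54}, and finally invoke Lemma \ref{LE3.3} to propagate $\|U\|_{\mathcal H}=o(1)$ from $[\alpha,\beta]$ to $(0,L)$, contradicting $\|U\|_{\mathcal H}=1$.

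The main obstacle is the sharp quantitative control of the $\varphi$--$\psi$ coupling (and, under \eqref{l4}, also of the $\varphi$--$w$ coupling): one has to show that the uncancelled cross terms $\lambda^2\int\varsigma v^2_x\overline{v^1}$ and $\lambda^2\int\varsigma v^1_x\overline{v^3}$ are bounded by $C|\lambda|^{\ell}\|F\|_{\mathcal H}$ times a vanishing factor, with the optimal $\ell$. This forces one to feed the $v^7$- and $v^2$-estimates back into \eqref{eqq3.37}--\eqref{eqq3.39} several times and combine them delicately, and it is markedly harder than in the equal-speed case because no structural cancellation removes these terms outright; lowering the exponent from $4$ to $2$ in the two-equal-speeds case relies entirely on the $k_1=k_3$ cancellation. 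A secondary point is that only hypothesis {\rm (H)} is assumed here (not $(\mathrm{H}')$), so every kernel estimate must be closed with $g'\le-cg$ alone.
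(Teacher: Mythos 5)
Your proposal is correct and follows essentially the same route as the paper: the Borichev--Tomilov reduction to the resolvent bounds $O(|\lambda|^{4})$ under \eqref{l4} and $O(|\lambda|^{2})$ under \eqref{l2}, the frequency-domain contradiction argument with the same multipliers (including the iterative feeding-back of the $v^2$- and $v^7$-estimates, carried out in the paper as the inductions of Lemmas \ref{LE4.2} and \ref{LE4.3}), the $k_1=k_3$ cancellation of the $v^1$--$v^3$ cross term that lowers the exponent from $4$ to $2$, and the final propagation via Lemma \ref{LE3.3}. No genuine deviation or gap beyond the expected level of detail.
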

Since $
\mathit{i}\mathbb{R}\subseteq\rho\left(\mathcal{A}\right),
$ then for the proof of Theorem \ref{TH4.2} and Theorem \ref{TH4.3}, according to \cite{borichev:10} (see also \cite{liu:05}), we still need to prove that
\begin{equation*}
\sup_{\lambda\in\mathbb{R}}\left\|\left(i\lambda Id-\mathcal{A}\right)^{-1}\right\|_{\mathcal{L}\left(\mathcal{H}\right)}=O\left(|\lambda|^l\right) \,\,\,\,\,\,\,\,  {\rm (H3)},
\end{equation*}
where $l=4$ if condition \eqref{l4} holds and $l=2$ if condition \eqref{l2} holds.
By a contradiction argument, suppose  there exist  a sequence of real numbers $\left(\lambda_n\right)_n$, with  $\lambda_n\to+\infty,$ and a sequence of vectors
\begin{equation}\label{eqq4.3}
U_n=\left(v^1_n,v^2_n,v_n^3,v^4_n,v^5_n,v^6_n,v^7_n\right)^{\mathsf{T}}\in D\left(\mathcal{A}\right) \ \text{ with }\ \|U_n\|_{\mathcal{H}}=1
\end{equation}
such that
\begin{equation}\label{eqq4.4}
\lambda_n^l\left(\mathit{i}\lambda_n U_n-\mathcal{A}U_n\right)=\left(f^1_n,f^2_n,f_n^3,f^4_n,f^5_n,f^6_n,f^7_n\right)^{\mathsf{T}}\to 0\ \text{ in } \mathcal{H}.
\end{equation}
Equivalently, we have 
\begin{eqnarray}
\mathit{i}\lambda_n v^1_n-v^4_n&=&h^1_n ,\label{eqq4.5}
\\
\mathit{i}\lambda_n v^2_n-v^5_n&=&h^2_n,\label{eqq4.6}
\\
\mathit{i}\lambda_n v^3_n-v^6_n&=&h^3_n, \label{eqq4.7}
\\
\rho_1\lambda_n^2 v^1_n+k_1 \left[\left(v^1_n\right)_x+v^2_n+\mathrm{l} v^3_n\right]_x+\mathrm{l} k_3\left[\left(v^3_n\right)_x-\mathrm{l} v^1_n\right]
&=&h^4_n,\label{eqq4.8}
\\
\rho_2\lambda_n^2 v^2_n   +\widetilde{k}_2 \left(v^2_n\right)_{xx}-k_1\left[\left(v^1_n\right)_x+v^2_n+\mathrm{l} v^3_n\right]+\int_0^{+\infty} g\left(s\right)\left(v^7_n\right)_{xx}ds&=&h^5_n,\label{eqq4.9}
\\
\rho_1\lambda_n^2 v^3_n+k_3\left[\left(v^3_n\right)_x-\mathrm{l} v^1_n\right]_x-\mathrm{l} k_1\left[\left(v^1_n\right)_x+v^2_n+\mathrm{l} v^3_n\right]&=&h^6_n ,\label{eqq4.10}
\\
\mathit{i}\lambda_n v^7_n +\left(v^7_n\right)_{s}-\mathit{i}\lambda_n v^2_n &=& h^7_n,\label{eqq4.11}
\end{eqnarray}
where
\begin{equation*}
\left\{
\begin{array}{ll}
\lambda_{n}^{l}h^1_n=f^1_n,\ \lambda_{n}^{l}h^2_n=f^2_n,\ \lambda_{n}^{l}h^3_n=f^3_n, \lambda_{n}^{l}h^7_n=f^7_n-f^2_n,\\ \\
\lambda_{n}^{l}h^4_n=-\rho_1 \left(
f^4_n+\mathit{i}\lambda_nf^1_n\right),\ \lambda_{n}^{l}h^5_n=-\rho_2\left(  f^5_n+\mathit{i}\lambda_nf^2_n\right),\
\lambda_{n}^{l}h^6_n=-\rho_1\left(  f^6_n+\mathit{i}\lambda_nf^3_n\right).
\end{array}
\right.
\end{equation*}
In the following we will check the condition {\rm (H3)} by finding a contradiction
with \eqref{eqq4.3} such as $\left\| U_n\right\|_\mathcal{H} =o(1)$. For clarity, we divide the proof into several lemmas.
From  now on, for simplicity, we drop the index $n$.
From \eqref{eqq4.5}-\eqref{eqq4.7}, we remark that
\begin{equation}\label{eqq4.13}
\left\|v^1\right\|=O\left(\frac{1}{\lambda}\right),\ \left\|v^2\right\|=O\left(\frac{1}{\lambda}\right),\ \left\|v^3\right\|=O\left(\frac{1}{\lambda}\right).
\end{equation}
Therefore, from \eqref{eqq4.8}-\eqref{eqq4.10}, we remark that
\begin{equation}\label{eqq4.15}
\left\| v^1_{xx}\right\|=O\left(\lambda\right),\ \left\|v^2_{xx}+\int_0^{+\infty}g\left(s\right)v^7_{xx}ds\right\|=O\left(\lambda\right),\ \left\|v^3_{xx}\right\|=O\left(\lambda\right).
\end{equation}
\begin{lem}\label{LE4.1} Let $l\geq0$. Under hypothesis  {\rm (H)}, we have
	\begin{equation}\label{eqq4.16}
	\int_0^{L}\int_0^{+\infty}g\left(s\right)\left|v^7_x\right|^2dsdx=o\left(\frac{1}{\lambda^l}\right).
	\end{equation}
\end{lem}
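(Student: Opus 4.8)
The plan is to follow the proof of Lemma~\ref{LE3.1}, carrying the extra factor $\lambda^l$ coming from \eqref{eqq4.4} through the dissipation identity. First I would take the inner product of \eqref{eqq4.4} with $U$ in $\mathcal{H}$, writing $F=\left(f^1,f^2,f^3,f^4,f^5,f^6,f^7\right)^{\mathsf{T}}$, to get
\begin{equation*}
\lambda^l\left<\mathit{i}\lambda U-\mathcal{A}U,U\right>_{\mathcal{H}}=\left<F,U\right>_{\mathcal{H}}.
\end{equation*}
Taking the real part and using that $\left<\mathit{i}\lambda U,U\right>_{\mathcal{H}}=\mathit{i}\lambda\|U\|_{\mathcal{H}}^2$ is purely imaginary (since $\lambda\in\mathbb{R}$), this reduces to
\begin{equation*}
-\lambda^l\,\Re\left<\mathcal{A}U,U\right>_{\mathcal{H}}=\Re\left<F,U\right>_{\mathcal{H}}.
\end{equation*}
Because $\|U\|_{\mathcal{H}}=1$ by \eqref{eqq4.3} and $F\to 0$ in $\mathcal{H}$ by \eqref{eqq4.4}, the Cauchy--Schwarz inequality yields $\Re\left<F,U\right>_{\mathcal{H}}=o(1)$, so together with the dissipation identity \eqref{eqq2.8} we obtain
\begin{equation*}
-\frac{\lambda^l}{2}\intspace\inttime g'\left(s\right)\left|v^7_x\right|^2dsdx=o\left(1\right),
\end{equation*}
that is,
\begin{equation*}
-\intspace\inttime g'\left(s\right)\left|v^7_x\right|^2dsdx=o\left(\frac{1}{\lambda^l}\right).
\end{equation*}

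Then I would invoke hypothesis {\rm (H)}: from $g'(s)\leq -c\,g(s)$ we get $-g'(s)\geq c\,g(s)\geq 0$ for all $s\geq 0$, hence
\begin{equation*}
c\intspace\inttime g\left(s\right)\left|v^7_x\right|^2dsdx\leq-\intspace\inttime g'\left(s\right)\left|v^7_x\right|^2dsdx=o\left(\frac{1}{\lambda^l}\right),
\end{equation*}
which is exactly \eqref{eqq4.16}.

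There is no real obstacle here; this lemma is the direct analogue of Lemma~\ref{LE3.1}, and its only subtlety is that the weight $\lambda^l$ in \eqref{eqq4.4} passes harmlessly onto the sign-definite dissipation term, precisely because the pairing $\left<\mathit{i}\lambda U,U\right>_{\mathcal{H}}$ has vanishing real part. Hypothesis {\rm (H)} then converts the resulting bound on $\int_0^L\int_0^{+\infty}g'(s)\left|v^7_x\right|^2dsdx$ into the claimed bound on $\int_0^L\int_0^{+\infty}g(s)\left|v^7_x\right|^2dsdx$. This estimate will serve, in the lemmas that follow, as the starting point for absorbing the memory term into the remaining energy quantities.
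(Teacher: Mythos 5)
Your proposal is correct and follows essentially the same route as the paper: take the inner product of \eqref{eqq4.4} with $U$, use that $\Re\left<\mathit{i}\lambda U,U\right>_{\mathcal{H}}=0$ together with the dissipation identity \eqref{eqq2.8} and the boundedness of $U$ to get $\intspace\inttime g'(s)\left|v^7_x\right|^2\,ds\,dx=o\left(\lambda^{-l}\right)$, and then convert this to the bound on the $g$-weighted integral via $g'(s)\leq -c\,g(s)$ from hypothesis {\rm (H)}. The only difference is that you spell out explicitly the steps the paper compresses into one line.
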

\begin{proof}
	Taking the inner product of \eqref{eqq4.4} with $U$ in $\mathcal{H}$. Then, using \eqref{eqq2.8} and the fact that $U$ is uniformly bounded in $\mathcal{H}$, we get
	\begin{equation}\label{eqq4.17'}
	\frac{1}{2}\int_0^{L}\int_0^{+\infty}g'\left(s\right)\left| v^7_x\right|^2dsdx=\Re\left(\left<\mathcal{A}U,U\right>_{\mathcal{H}}\right)=-\Re\left(\left<\mathit{i}\lambda U-\mathcal{A}U,U\right>_{\mathcal{H}}\right)=o\left(\frac{1}{\lambda^l}\right).
	\end{equation}
	Using condition {\rm (H)} in \eqref{eqq4.17'}, we get
	$$
	\int_0^{L}\int_0^{+\infty}g\left(s\right)\left|v^7_x\right|^2dsdx=o\left(\frac{1}{\lambda^l}\right).$$
	Thus the proof is complete.
\end{proof}
\begin{lem}\label{LE4.2'} Let $l\geq0$. Under hypothesis {\rm (H)},  we have
	\begin{equation}\label{eqq4.18}
	\intspace\left|v^2_x\right|^2dx=o\left(\frac{1}{\lambda^{\frac{l}{2}}}\right).
	\end{equation}
\end{lem}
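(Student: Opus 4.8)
The plan is to follow the scheme of Lemma~\ref{LE3.2}, carrying along the powers of $\lambda$ supplied by the polynomial scaling in \eqref{eqq4.4}. First I would take the inner product of equation~\eqref{eqq4.11} with $v^2$ in $L^2_g\left(\mathbb{R}_+,H^1_0\right)$, with $v^2$ regarded as a function of $(x,s)$ that is constant in $s$. Since then $\left\|v^2\right\|_g^2=g_0\left\|v^2_x\right\|^2$ and $\left(\mathit{i}\lambda v^2,v^2\right)_g=\mathit{i}\lambda g_0\left\|v^2_x\right\|^2$, rearranging and dividing by $\mathit{i}\lambda$ gives
\[
g_0\intspace\left|v^2_x\right|^2dx = I_1+I_2+I_3,
\]
where $I_1=\intspace\inttime g(s)v^7_x\overline{v^2_x}\,ds\,dx$, $I_2=\frac{1}{\mathit{i}\lambda}\intspace\inttime g(s)v^7_{xs}\overline{v^2_x}\,ds\,dx$ and $I_3=-\frac{1}{\mathit{i}\lambda}\intspace\inttime g(s)h^7_x\overline{v^2_x}\,ds\,dx$.

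Next I would bound the three terms. For $I_1$, Cauchy--Schwarz in $L^2_g\left(\mathbb{R}_+,H^1_0\right)$ gives $\left|I_1\right|\le\sqrt{g_0}\,\left\|v^7\right\|_g\left\|v^2_x\right\|$; here $\left\|v^2_x\right\|$ is uniformly bounded, being dominated by $\left\|U\right\|_{\mathcal{H}}=1$ through \eqref{norm1}, and $\left\|v^7\right\|_g=o\left(\lambda^{-l/2}\right)$ by Lemma~\ref{LE4.1}, so $I_1=o\left(\lambda^{-l/2}\right)$. For $I_2$, I would integrate by parts in $s$; using $v^7(x,0)=0$ and the decay of $g$ furnished by hypothesis~{\rm (H)} (the same manipulation as in \eqref{m10} and \eqref{eqq3.48}), the boundary contributions vanish and $I_2=-\frac{1}{\mathit{i}\lambda}\intspace\inttime g'(s)v^7_x\overline{v^2_x}\,ds\,dx$. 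Then, by Hölder's inequality in $s$ and in $x$, using that $\lim_{s\to0}\sqrt{g(s)}$ exists and the dissipation bound $\intspace\inttime\left(-g'(s)\right)\left|v^7_x\right|^2ds\,dx=o\left(\lambda^{-l}\right)$ coming from \eqref{eqq4.17'}, one obtains $\left|I_2\right|\le C\left|\lambda\right|^{-1}\left(\intspace\inttime\left(-g'\right)\left|v^7_x\right|^2ds\,dx\right)^{1/2}\left\|v^2_x\right\|=o\left(\lambda^{-1-l/2}\right)$. For $I_3$, since $\lambda^l h^7=f^7-f^2$ with $f^7\to0$ in $L^2_g\left(\mathbb{R}_+,H^1_0\right)$ and $f^2\to0$ in $H^1_0(0,L)$, we have $\left\|h^7\right\|_g=o\left(\lambda^{-l}\right)$, hence $\left|I_3\right|\le\sqrt{g_0}\left|\lambda\right|^{-1}\left\|h^7\right\|_g\left\|v^2_x\right\|=o\left(\lambda^{-1-l}\right)$.

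Combining the three bounds, $g_0\intspace\left|v^2_x\right|^2dx=o\left(\lambda^{-l/2}\right)$, which is \eqref{eqq4.18}. (In fact, bounding $I_1$ by Young's inequality and absorbing the $\left\|v^2_x\right\|^2$ term yields the sharper estimate $\intspace\left|v^2_x\right|^2dx=o\left(\lambda^{-l}\right)$, but only the weaker form \eqref{eqq4.18} is needed in the sequel.)

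I do not expect a genuine obstacle here: the argument is a routine adaptation of Lemma~\ref{LE3.2}. The one step that requires care is the integration by parts in $s$ inside $I_2$, where one must check that the boundary term at $s=+\infty$ vanishes; this follows from the exponential-type control $g'\le-cg$ in~{\rm (H)} --- so that $g(s)$ decays exponentially --- together with $v^7,v^7_s\in L^2_g\left(\mathbb{R}_+,H^1_0\right)$, exactly as in the well-posedness computation around \eqref{m10}.
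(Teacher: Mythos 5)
Your main argument is correct and is essentially the paper's proof: the same multiplication of \eqref{eqq4.11} by $\overline{v^2}$ in $L^2_g\left(\mathbb{R}_+,H^1_0\right)$, the same Cauchy--Schwarz bound on $\intspace\inttime g\, v^7_x\overline{v^2_x}\,ds\,dx$ via Lemma \ref{LE4.1}, and the same integration by parts in $s$ combined with \eqref{eqq4.17'} for the $v^7_{xs}$ term; you merely divide by $\lambda$ at the outset instead of at the end. One caveat about your closing parenthetical: with your stated bound $I_2=o\left(\lambda^{-1-l/2}\right)$ (obtained using $\left\|v^2_x\right\|=O(1)$), applying Young's inequality only to $I_1$ yields $o\left(\lambda^{-\min\left(l,\,1+l/2\right)}\right)$, which for $l=4$ is only $o\left(\lambda^{-3}\right)$ and not $o\left(\lambda^{-l}\right)$; to reach the sharper rate one must also absorb the $\left\|v^2_x\right\|$ factor appearing in $I_2$ (or iterate as the paper does in Lemma \ref{LE4.2}), which is exactly why that bootstrap lemma is proved separately.
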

\begin{proof} 
	Multiplying \eqref{eqq4.11} by $\overline{v^2}$ in $L^2_g\left(\mathbb{R}_+,H_0^1\right)$. Then,  using the fact that $\left\|v^2\right\|^2_{g}=g^0\left\| v^2_x\right\|^2$, $v^2_x$ is uniformly bounded in $L^2(0,L)$, $f^2$ converges to zero in $H^1_0\left(0,L\right)$ and $f^7$ converges to zero in $L^2_g\left(\mathbb{R}_+,H_0^1\right)$,    we get
	\begin{equation}\label{eqq4.22}
	g^0\lambda\intspace\left|v^2_x\right|^2dx=\lambda\intspace\inttime g\left(s\right)v^7_x\overline{v_x^2}ds dx-
	\mathit{i}\intspace\inttime g\left(s\right) v^7_{xs}\overline{v^2_x} ds dx+o\left(\frac{1}{\lambda^l}\right).
	\end{equation}
	From equation \eqref{eqq4.3} and Lemma \ref{LE4.1}, we get
	\begin{equation}\label{eqq4.22'}
	\lambda\intspace\inttime g\left(s\right)v^7_x\overline{v_x^2}ds dx=o\left(\frac{1}{\lambda^{\frac{l}{2}-1}}\right).
	\end{equation}
	Applying by parts integration,  Holder's inequality in $L^2(0,L)$ and $L^2(0,+\infty)$. Then, using \eqref{eqq4.17'}, the fact that $v^2_x$ is uniformly bounded in $L^2(0,L)$ and  $\displaystyle{\lim_{s\to0}\sqrt{g\left(s\right)}}$ exists, we get
	\begin{equation}\label{eqq4.22''}
	\left|\intspace\inttime g\left(s\right) v^7_{xs}\overline{v^2_x} ds dx\right|\leq  \displaystyle{\lim_{s\to0}\sqrt{g\left(s\right)}}\left(\int_0^{L}
	\int_0^{+\infty}-g'\left(s\right)
	\left|v^7_x\right|^2dsdx\right)^{1/2} \left\|v^2_x\right\|=o\left(\frac{1}{\lambda^{\frac{l}{2}}}\right).
	\end{equation}
	Inserting   \eqref{eqq4.22'} and \eqref{eqq4.22''}  into  \eqref{eqq4.22}, we deduce the  estimation of \eqref{eqq4.18}.  Thus the proof is complete.
\end{proof}
\begin{lem}\label{LE4.2} Let $l\geq0$. Under hypothesis {\rm (H)},  we have
	\begin{equation}\label{Eqq4.18}
	\intspace\left|v^2_x\right|^2 dx=o\left(\frac{1}{\lambda^l}\right).
	\end{equation}
\end{lem}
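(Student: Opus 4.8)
The plan is to revisit the identity obtained midway through the proof of Lemma \ref{LE4.2'}, namely
\begin{equation*}
g^0\lambda\intspace\left|v^2_x\right|^2dx=\lambda\intspace\inttime g\left(s\right)v^7_x\overline{v^2_x}\,ds\,dx-\mathit{i}\intspace\inttime g\left(s\right) v^7_{xs}\overline{v^2_x}\,ds\,dx+o\left(\frac{1}{\lambda^l}\right),
\end{equation*}
but this time \emph{without} replacing the factor $\|v^2_x\|$ by its (uniform) bound: instead I would keep $\|v^2_x\|$ explicit on the right-hand side and absorb it into the left-hand side at the very end. This is the whole point; the rest is bookkeeping with Lemma \ref{LE4.1} and the dissipation estimate \eqref{eqq4.17'}.

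First I would estimate the two integral terms by Cauchy--Schwarz so that each carries a factor $\|v^2_x\|$ multiplied by a quantity that is $o(\lambda^{-l/2})$. For the first term, Cauchy--Schwarz in $L^2_g(\rr_+,H^1_0)$ together with $\|v^2\|_g=\sqrt{g^0}\,\|v^2_x\|$ gives
$\left|\lambda\intspace\inttime g(s)v^7_x\overline{v^2_x}\,ds\,dx\right|\le\lambda\,\|v^7\|_g\,\|v^2\|_g=\sqrt{g^0}\,\lambda\,\|v^7\|_g\,\|v^2_x\|$,
and Lemma \ref{LE4.1} yields $\|v^7\|_g=o(\lambda^{-l/2})$, so this term is $o(\lambda^{1-l/2})\|v^2_x\|$. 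For the second term, the same computation that produced \eqref{eqq4.22''}, kept one step earlier, gives
$\left|\intspace\inttime g(s)v^7_{xs}\overline{v^2_x}\,ds\,dx\right|\le\displaystyle{\lim_{s\to0}}\sqrt{g(s)}\left(\intspace\inttime -g'(s)\left|v^7_x\right|^2ds\,dx\right)^{1/2}\|v^2_x\|=o(\lambda^{-l/2})\|v^2_x\|$,
using \eqref{eqq4.17'}. Substituting both bounds into the identity above, and noting $o(\lambda^{1-l/2})$ dominates $o(\lambda^{-l/2})$, I obtain $g^0\lambda\|v^2_x\|^2\le C\,o(\lambda^{1-l/2})\|v^2_x\|+o(\lambda^{-l})$ for some constant $C>0$.

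The final step is the absorption argument: by Young's inequality, $C\,o(\lambda^{1-l/2})\|v^2_x\|\le\tfrac{g^0\lambda}{2}\|v^2_x\|^2+o(\lambda^{1-l})$, whence $\tfrac{g^0\lambda}{2}\|v^2_x\|^2\le o(\lambda^{1-l})$, i.e. $\intspace|v^2_x|^2dx=o(\lambda^{-l})$, which is \eqref{Eqq4.18}. I expect the only delicate point to be ensuring that, after Cauchy--Schwarz, every contribution genuinely carries the factor $\|v^2_x\|$ (so that it can be absorbed), rather than a factor that is merely bounded; this is exactly why feeding the weaker bound $\|v^2_x\|=o(\lambda^{-l/4})$ from Lemma \ref{LE4.2'} into the same identity does not already close the gap in one substitution (iterating that substitution the exponent converges to $l$ only in the limit, while the quadratic/absorption argument reaches $l$ at once). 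In fact this route does not even need Lemma \ref{LE4.2'}: it rests solely on Lemma \ref{LE4.1} and \eqref{eqq4.17'}.
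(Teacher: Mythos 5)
Your proposal is correct, and it takes a genuinely different route from the paper. The paper does not absorb: after Lemma \ref{LE4.2'} it runs an induction on exponents $l_N=\frac{l}{2}\sum_{k=0}^{N}2^{-k}$, re-multiplying \eqref{eqq4.11} by $\lambda^{l_N}\overline{v^2}$ at each step and feeding the previous estimate $\lambda^{l_{N-1}}\left\|v^2_x\right\|^2=o(1)$ back in, so the exponent only approaches $l$ in the limit $N\to+\infty$. Your version keeps the factor $\left\|v^2_x\right\|$ explicit in the two terms of \eqref{eqq4.22} (your bounds are exactly the Cauchy--Schwarz estimates behind \eqref{eqq4.22'} and \eqref{eqq4.22''}, stopped one step earlier), and a single Young/absorption step then yields $\tfrac{g^0\lambda}{2}\left\|v^2_x\right\|^2\le o\left(\lambda^{1-l}\right)+o\left(\lambda^{-l}\right)$, hence \eqref{Eqq4.18} at once. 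This buys two things: it is shorter, needing only Lemma \ref{LE4.1}, the dissipation estimate \eqref{eqq4.17'} and the identity \eqref{eqq4.22} (so Lemma \ref{LE4.2'} becomes superfluous, as you note); and it reaches the exponent $l$ exactly rather than only the family $l_N<l$, thereby avoiding the limiting step $l_N\to l$ on which the paper's induction implicitly relies. The only point to state explicitly when writing it up is that the $o\left(\lambda^{-l}\right)$ remainder in \eqref{eqq4.22} (coming from $\left(h^7,v^2\right)_g$) uses the uniform bound on $\left\|v^2_x\right\|$, which is harmless since after division by $\lambda$ it contributes $o\left(\lambda^{-l-1}\right)$; alternatively one can keep $\left\|v^2_x\right\|$ there too and absorb it as well.
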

\begin{proof}
	Let $\displaystyle{l_N=\frac{l}{2}\sum_{k=0}^{N}\frac{1}{2^k}}$. Since $\displaystyle{\lim_{N\rightarrow+\infty}l_N}=l$, it is enough  to prove by induction on $N \in \mathbb{N}$ that
	\begin{equation}\label{eqq4.19}
	\int_0^{L}\left|v^2_x\right|^2dx=o\left(\frac{1}{\lambda^{l_N}}\right).
	\end{equation}
	When $N=0$, estimation \eqref{eqq4.19} holds by Lemma \ref{LE4.2'}. Suppose that  \eqref{eqq4.19} holds for $N-1$; i.e,
	\begin{equation}\label{eqq4.23}
	\lambda^{l_{N-1}}\int_0^{L}\left|v^2_x\right|^2dx=o\left(1\right).
	\end{equation}
	Multiplying  \eqref{eqq4.11} by $\lambda^{l_{N}}\overline{v^2}$
	in $L^2_g\left(\mathbb{R}_+,H_0^1\right)$.  Then  using the fact that $\left\|v^2\right\|^2_{g}=g^0\left\| v^2_x\right\|^2$, $v^2_x$ is uniformly bounded in $L^2(0,L)$, $f^2$ converges to zero in $H^1_0\left(0,L\right)$ and $f^7$ converges to zero in $L^2_g\left(\mathbb{R}_+,H_0^1\right)$,    we get  
	\begin{equation}\label{eqq4.24}
	\begin{array}{ll}
	\lambda^{l_{N}}g^0\intspace\left|v^2_x\right|^2dx=\intspace\inttime g\left(s\right)\lambda^{\frac{l}{2}} v^7_x\lambda^{l_{N}-\frac{l}{2}} \overline{v^2_x}dsdx\\ \\ \hspace{2.8cm}-\frac{\mathit{i}}{\lambda}\intspace\inttime g\left(s\right)\lambda^{\frac{l}{2}}v^7_{xs}\lambda^{l_{N}-\frac{l}{2}}  \overline{v^2_x}dsdx+o\left(\dfrac{1}{\lambda^{l+1-l_N}}\right).
	\end{array}
	\end{equation}
	Using the fact that $l_N-\frac{l}{2}= \frac{l_{N-1}}{2}$,  Lemma \ref{LE4.1},  \eqref{eqq4.22''} and  \eqref{eqq4.23}, we get
	$$
	\lambda^{l_{N}}\intspace\left|v^2_x\right|^2dx=o\left(1\right).
	$$
	Thus the proof is complete.
\end{proof}
\begin{lem}\label{LE4.3'}
	Let $2\leq l\leq4$. Under hypothesis {\rm (H)}, we have
	\begin{equation}\label{eqq4.29}
	\int_0^{L}\varsigma\left| v^1_x\right|^2dx=o\left(\frac{ 1}{ \lambda^{\frac{l}{2}-1}}\right),
	\end{equation}
	where  $\varsigma$ is the cut-off function defined in Section  \ref{se3}.
\end{lem}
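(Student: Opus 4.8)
The plan is to exploit equation \eqref{eqq4.9} — the $\psi$-equation — to recover information on $v^1_x$, using $v^2_x$ (which is already small by Lemma \ref{LE4.2}) as a multiplier-weighted test function localized by the cut-off $\varsigma$. First I would multiply \eqref{eqq4.9} by $\varsigma\,\overline{v^1_x}$ in $L^2(0,L)$ and integrate by parts in $x$. Because $\varsigma$ vanishes near the endpoints, no boundary terms appear, which is the whole point of having introduced $\varsigma$ in Section \ref{se3}. The term $\widetilde k_2 (v^2_n)_{xx}$ and the history term $\int_0^{+\infty} g(s)(v^7_n)_{xx}\,ds$, after integration by parts, produce $-\widetilde k_2\int \varsigma' v^2_x\overline{v^1_x} - \widetilde k_2\int \varsigma v^2_x\overline{v^1_{xx}}$ and similarly for $v^7$; using \eqref{eqq4.15} we have $\|v^1_{xx}\| = O(\lambda)$, so these are controlled by $\|v^2_x\|$ and by $\left(\int\!\!\int g|v^7_x|^2\right)^{1/2}$ times $O(\lambda)$, which by Lemmas \ref{LE4.2} and \ref{LE4.1} is $o(\lambda^{1-l/2})\cdot O(\lambda) = o(\lambda^{2-l/2})$ — not yet small enough, so I would instead pair the $v^2_{xx}$ and $v^7_{xx}$ terms \emph{together} with the quantity $\|v^2_{xx}+\int g v^7_{xx}\,ds\| = O(\lambda)$ from \eqref{eqq4.15}, and multiply \eqref{eqq4.8} by a matching test function so that the dangerous $O(\lambda^2)$ contributions cancel, in the same spirit as the subtraction of \eqref{eqq3.58} from \eqref{eqq3.59} in Lemma \ref{LE3.4}.

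Concretely, I would follow the two-step structure of Lemma \ref{LE3.4}: in Step 1, multiply \eqref{eqq4.9} by $\varsigma\overline{v^1_x}$ and \eqref{eqq4.8} by $\frac{\varsigma}{k_1}\big(\widetilde k_2\overline{v^2_x}+\int_0^{+\infty}g(s)\overline{v^7_x}\,ds\big)$, integrate by parts, and subtract; the second-order terms $v^1_{xx}$ cancel against $(v^1_x+v^2+\mathrm l v^3)_x$ up to lower-order pieces, leaving
$$
k_1\int_0^L\varsigma|v^1_x|^2\,dx - \Re\!\left\{\rho_1\lambda^2\!\int_0^L\!\Big(\widetilde k_2\overline{v^2_x}+\inttime g(s)\overline{v^7_x}\,ds\Big)\varsigma v^1\,dx\right\} + \Re\!\left\{\rho_2\lambda^2\!\int_0^L\!\varsigma v^2_x\overline{v^1}\,dx\right\} = (\text{error}),
$$
where the error terms involve $f^1,f^4,f^5$ (which carry factors $\lambda^{-l}$ or $\lambda^{1-l}$), the history norm from Lemma \ref{LE4.1}, and $\|v^2_x\|$ from Lemma \ref{LE4.2}, together with $\|\lambda v^1\| = O(1)$ and $\|v^1_{xx}\|=O(\lambda)$; a careful bookkeeping gives that the error is $o(\lambda^{1-l/2})$. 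In Step 2, I would estimate the two remaining $\lambda^2$-weighted terms: use \eqref{eqq4.13} so that $\|v^1\| = O(\lambda^{-1})$ and $\|v^2\|=O(\lambda^{-1})$, whence $\|\lambda^2 v^1\| = O(\lambda)$ and $\|\lambda^2 v^2_x\| \le \lambda^2\|v^2_x\| = o(\lambda^{2-l/2})$ by Lemma \ref{LE4.2}; then $|\rho_2\lambda^2\int\varsigma v^2_x\overline{v^1}| \le \lambda^2\|v^2_x\|\,\|v^1\| = o(\lambda^{1-l/2})$, and the history-weighted term is handled by Lemma \ref{LE4.1} times $\|\lambda^2 v^1\| = O(\lambda)$, giving $o(\lambda^{1-l/2})$. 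Dividing through by $k_1$ yields $\int_0^L \varsigma|v^1_x|^2\,dx = o(\lambda^{1-l/2}) = o(\lambda^{-(l/2-1)})$, which is exactly \eqref{eqq4.29}.

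The main obstacle I anticipate is the bookkeeping of the history term: unlike in Lemma \ref{LE3.4}, here every estimate must be tracked with its precise power of $\lambda$ (the target $o(\lambda^{1-l/2})$ is delicate for $l$ close to $4$, where it becomes $o(\lambda^{-1})$), so the cancellation of the $O(\lambda^2)$ contributions from $v^1_{xx}$ and from $v^2_{xx}+\int g v^7_{xx}$ must be exact, not merely up to $o(1)$. I would also need the analogue of \eqref{N123}–\eqref{eqq3.65}: an auxiliary identity obtained by testing the transport equation \eqref{eqq4.11} (in the form $\lambda v^7_x - \mathrm i v^7_{xs} - \lambda v^2_x = -\mathrm i h^7_x$) against $\varsigma\lambda\overline{v^1}$ in $L^2_g(\mathbb R_+, L^2)$, integrating by parts in $s$ and using hypothesis (H) together with Lemmas \ref{LE4.1} and \ref{LE4.2}, to replace $\int\!\!\int g(s)v^7_x\,ds$ by $g^0 v^2_x$ modulo terms of order $o(\lambda^{1-l/2})$; this is what makes the $\rho_1\lambda^2$-term collapse to something controlled by $\|v^2_x\|$ and $\|\lambda v^1\|$.
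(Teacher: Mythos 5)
Your route is workable, but it is not the paper's, and the reason you discarded the simpler direct argument is an arithmetic slip: Lemma \ref{LE4.2} gives $\|v^2_x\|=o(\lambda^{-l/2})$, not $o(\lambda^{1-l/2})$, and Lemma \ref{LE4.1} gives the same order for the memory term, so the dangerous pairing with $\|v^1_{xx}\|=O(\lambda)$ from \eqref{eqq4.15} is $o(\lambda^{-l/2})\cdot O(\lambda)=o(\lambda^{1-l/2})$ --- exactly the target. That direct computation is precisely the paper's proof: it multiplies \eqref{eqq4.9} alone by $\varsigma\overline{v^1_x}$, first without weight to get $\int_0^L\varsigma|v^1_x|^2dx=o(1)$, then with the weight $\lambda^{\frac{l}{2}-1}$, pairing $\lambda^{\frac{l}{2}}\big(\widetilde k_2 v^2_x+\int_0^{+\infty}g(s)v^7_xds\big)=o(1)$ with $\lambda^{-1}\overline{v^1_{xx}}=O(1)$ and $\lambda^{\frac{l}{2}}v^2_x=o(1)$ with $\lambda\overline{v^1}=O(1)$; no companion multiplication of \eqref{eqq4.8}, no cancellation, and no transport-equation identity are needed, and the preliminary $o(1)$ step is what controls the borderline term $k_1\lambda^{\frac{l}{2}-1}\int_0^L\varsigma(v^2+\mathrm{l}v^3)\overline{v^1_x}dx$, since $\lambda^{\frac{l}{2}-1}\|v^3\|=O(\lambda^{\frac{l}{2}-2})=O(1)$ for $l\le4$. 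Your heavier scheme --- the Lemma \ref{LE3.4}-type subtraction to cancel the $v^1_{xx}$ pairing, followed by direct Cauchy--Schwarz bounds on the two $\lambda^2$-weighted terms using $\|v^1\|=O(\lambda^{-1})$ from \eqref{eqq4.13} together with Lemmas \ref{LE4.1}--\ref{LE4.2} --- also yields $o(\lambda^{1-\frac{l}{2}})$; what it buys is that the second-order terms disappear identically, at the price of a second multiplied equation and more error bookkeeping, whereas the paper exploits the extra decay ($o(\lambda^{-l/2})$, $l\ge2$) that is absent in the exponential case and keeps everything in one equation.

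Two points in your bookkeeping must be made explicit for the claim ``the error is $o(\lambda^{1-\frac{l}{2}})$'' to stand. First, the term $k_1\mathrm{l}\int_0^L\varsigma v^3\overline{v^1_x}dx$ is only $O(\lambda^{-1})$ by the crude bounds, which at $l=4$ is not $o(\lambda^{-1})$; you need either the preliminary unweighted estimate $\int_0^L\varsigma|v^1_x|^2dx=o(1)$ (as in the paper) or a Young-type absorption into $k_1\int_0^L\varsigma|v^1_x|^2dx$ using $\|v^3\|^2=O(\lambda^{-2})=o(\lambda^{1-\frac{l}{2}})$. Second, your final auxiliary identity from \eqref{eqq4.11} is superfluous: once you estimate the $\rho_1\lambda^2$-term directly by Lemma \ref{LE4.1} times $\|\lambda^2v^1\|=O(\lambda)$, as you already do in your Step 2, there is nothing left for that identity to accomplish; it is needed in Lemma \ref{LE3.4} only because there the smallness of $v^2_x$ and of the memory term is merely $o(1)$ and the equal-speed condition must be invoked.
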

\begin{proof} Since $l\geq2$, from Lemma \ref{LE4.1} and  Lemma  \ref{LE4.2} we have
	\begin{equation}\label{l>2}
	\intspace\left| v_x^2\right|^2=o\left(\frac{1}{\lambda^2}\right) \ \ \textrm{and} \ \ \intspace\inttime g\left(s\right)\left| v^7_x\right|^2dsdx=o\left(\frac{1}{\lambda^2}\right).
	\end{equation}
	Multiplying \eqref{eqq4.9} by $\varsigma  \overline{v^1_x}$ in $L^2\left(0,L\right)$. Then,  using  \eqref{eqq4.13} , \eqref{l>2},  and the fact that  $v^1_x$ is uniformly bounded in $L^2(0,L)$, $f^2$ converges to zero in $H^1_0\left(0,L\right)$,  $f^5$ converges to zero in $L^2\left(0,L\right)$,    we get we get
	\begin{equation}\label{eqq4.28}
	\int_0^{L}\varsigma\left| v^1_x\right|^2dx=o\left(1\right).
	\end{equation}
	Next,  multiplying \eqref{eqq4.9} by $\lambda^{\frac{l}{2}-1}\varsigma  \overline{v^1_x}$ in $L^2\left(0,L\right)$.  Then  using the fact that $v^1_x$ is uniformly bounded in $L^2(0,L)$, $f^2$ converges to zero in $H^1_0\left(0,L\right)$ and $f^5$ converges to zero in $L^2\left(0,L\right)$,    we get
	\begin{equation*}
	\begin{array}{l}
	\displaystyle{
		-k_1\lambda^{\frac{l}{2}-1}\intspace\varsigma\left| v^1_x\right|^2dx-\rho_2\intspace\varsigma  \lambda^{\frac{l}{2}} v^2_x\lambda\overline{v^1}dx-\rho_2\intspace\varsigma' \lambda^{\frac{l}{2}} v^2\lambda\overline{v^1}dx}\\ \\  \displaystyle{
		-\intspace\lambda^{\frac{l}{2}}\left(\widetilde{k}_2  v^2_x+\inttime g\left(s\right)v^7_{x}ds\right)\varsigma\lambda^{-1} \overline{v^1_{xx}}dx-\intspace\lambda^{\frac{l}{2}-1}\left(\widetilde{k}_2  v^2_x+\inttime g\left(s\right)v^7_{x}ds\right)\varsigma' \overline {v^1_{x}}dx}\\ \\  \displaystyle{-k_1\intspace\lambda^{\frac{l}{2}-1}\left( v^2+\mathrm{l} v^3\right)\varsigma \overline{v^1_{x}}dx}=o\left(\frac{1}{\lambda^{\frac{l}{2}}}\right).
	\end{array}
	\end{equation*}
	Since $l\leq4$, due to \eqref{eqq4.13}, we have
	$$
	\lambda^{\frac{l}{2}-1}\intspace\left( v^2+\mathrm{l} v^3\right)dx=O(1).
	$$
	From \eqref{eqq4.3}, \eqref{eqq4.4},  \eqref{eqq4.13}, \eqref{eqq4.28}, Lemma \ref{LE4.1}, and Lemma  \ref{LE4.2} we obtain
	\eqref{eqq4.29}. Thus the proof is complete.
\end{proof}
\begin{lem}\label{LE4.3}
	Let $2\leq l\leq 4$. Under hypothesis {\rm (H)}, we have
	\begin{equation}\label{eqq4.25}
	\int_0^{L}\varsigma\left| v^1_x\right|^2dx=o\left(\frac{1}{ \lambda^{l-2}}\right),\ \text{ and } \int^{L}_0\varsigma\left| v^1\right|^2dx=o\left(\frac{1}{\lambda^{l}}\right).
	\end{equation}
\end{lem}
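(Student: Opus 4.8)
The plan is to bootstrap the interior bound $\int_0^L\varsigma\,|v^1_x|^2\,dx=o(\lambda^{-(l/2-1)})$ of Lemma~\ref{LE4.3'} up to the sharp exponent $l-2$, running it in tandem with a matching bound for $\int_0^L\varsigma\,|\lambda v^1|^2\,dx$; the second assertion of the Lemma is then simply the latter divided by $\lambda^2$. Concretely, I would fix a nested family of cut‑offs $\varsigma_0\succ\varsigma_1\succ\varsigma_2\succ\cdots$ of the type used in Section~\ref{se3}, all $\equiv1$ on a fixed neighbourhood of $\mathrm{supp}\,\varsigma$ and arranged so that $\mathrm{supp}\,\varsigma_{N+1}\subset\{\varsigma_N\equiv1\}$ (so every collar term produced by $\varsigma_{N+1}'$ sits where $\varsigma_N\equiv1$, where the stage‑$N$ interior bounds are available), and prove by induction on $N$ that $\int_0^L\varsigma_N|v^1_x|^2\,dx=o(\lambda^{-a_N})$ and $\int_0^L\varsigma_N|\lambda v^1|^2\,dx=o(\lambda^{-a_N})$, with $a_0=\tfrac l2-1$ and $a_{N+1}=\tfrac l2-1+\tfrac{a_N}2$, so that $a_N\nearrow l-2$. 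The base case $N=0$ is Lemma~\ref{LE4.3'} (whose proof is unchanged for a slightly larger cut‑off $\varsigma_0$), together with a first test of \eqref{eqq4.8} against $\lambda^{l/2-1}\varsigma_1\overline{v^1}$, using \eqref{eqq4.13}, Lemma~\ref{LE4.2} and the first bound to absorb the cross terms.

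\textbf{The inductive step.} It has three moves, carried out in order. \emph{(a)} From the pointwise form of \eqref{eqq4.8}, $k_1v^1_{xx}=h^4-\rho_1\lambda^2v^1+\mathrm{l}^2k_3v^1-k_1v^2_x-\mathrm{l}(k_1+k_3)v^3_x$, together with the stage‑$N$ bound on $\varsigma_N|\lambda v^1|^2$, Lemma~\ref{LE4.2} and the global bound $\|v^3_x\|=O(1)$, I obtain the \emph{localized} second‑derivative control $\|\varsigma_N^{1/2}v^1_{xx}\|=o(\lambda^{1-a_N/2})$; this is precisely the point at which the otherwise useless estimate $\|v^1_{xx}\|=O(\lambda)$ of \eqref{eqq4.15} becomes usable. \emph{(b)} I substitute $k_1v^1_x=\rho_2\lambda^2v^2+\bigl(\widetilde{k}_2v^2_x+\int_0^{+\infty}g(s)v^7_x\,ds\bigr)_x-h^5-k_1v^2-k_1\mathrm{l}v^3$, read off from \eqref{eqq4.9}, into $k_1^2\int_0^L\varsigma_{N+1}|v^1_x|^2\,dx=k_1\int_0^L\varsigma_{N+1}v^1_x\,\overline{k_1v^1_x}\,dx$, integrate by parts once in the term carrying $\partial_x$, and estimate term by term. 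The dominant contributions are (i) $v^1_{xx}$ paired with the small factor $Q:=\widetilde{k}_2v^2_x+\int_0^{+\infty}g(s)v^7_x\,ds$, which is $O\bigl(\|\varsigma_{N+1}^{1/2}v^1_{xx}\|\,\|Q\|\bigr)=o(\lambda^{1-a_N/2})\cdot o(\lambda^{-l/2})$ by (a) and Lemmas~\ref{LE4.1}–\ref{LE4.2}; and (ii) the $\rho_2\lambda^2v^2$ term, which after a further integration by parts becomes $\rho_2\int\varsigma_{N+1}(\lambda v^2_x)(\lambda\overline{v^1})+(\text{lower order})=o(\lambda^{-(l/2-1)})\cdot o(\lambda^{-a_N/2})$ by Lemma~\ref{LE4.2} and the stage‑$N$ bound on $\varsigma_N|\lambda v^1|^2$. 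The remaining $v^2$, $v^3$, $h^5$ and $\varsigma_{N+1}'$‑collar terms are of the same or smaller order by \eqref{eqq4.13}, Lemma~\ref{LE4.2} and the stage‑$N$ bound on $\varsigma_N|v^1_x|^2$. Every contribution lands exactly at $o(\lambda^{-a_{N+1}})$ with $a_{N+1}=\tfrac l2-1+\tfrac{a_N}2$, giving the first stage‑$(N+1)$ estimate. \emph{(c)} Testing \eqref{eqq4.8} against $\lambda^{a_{N+1}}\varsigma_{N+2}\overline{v^1}$ and using the estimate just obtained to kill $\lambda^{a_{N+1}}\int\varsigma|v^1_x|^2$, together with \eqref{eqq4.13} and Lemma~\ref{LE4.2} to handle the $v^2$, $v^3$, $\mathrm{l}k_3(v^3_x-\mathrm{l}v^1)$ and collar contributions (all $o(1)$, using $a_{N+1}<2$, valid since $l\le4$), yields the matching stage‑$(N+1)$ bound on $\int\varsigma_{N+2}|\lambda v^1|^2$.

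\textbf{Passage to the limit.} Since $a_N\nearrow l-2$ and all the $\varsigma_N$ equal $1$ on a fixed neighbourhood of $\mathrm{supp}\,\varsigma$, a diagonal choice $N=N(\lambda)\to\infty$, slow enough that the finitely many $o(1)$ errors at stage $N(\lambda)$ remain $o(1)$ while $\lambda^{(l-2)-a_{N(\lambda)}}=O(1)$, gives $\int_0^L\varsigma|v^1_x|^2\,dx=o(\lambda^{-(l-2)})$ and $\int_0^L\varsigma|\lambda v^1|^2\,dx=o(\lambda^{-(l-2)})$; the latter is $\int_0^L\varsigma|v^1|^2\,dx=o(\lambda^{-l})$, which together are exactly \eqref{eqq4.25}.

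\textbf{Main obstacle.} I expect the crux to be the tightness of the exponent bookkeeping: the recursion $a_{N+1}=\tfrac l2-1+\tfrac{a_N}2$ closes only because every cross term comes out at precisely the borderline power, so the pairings must be chosen with care — in particular transferring a full $\lambda^2$ onto $v^2$ by integration by parts so that Lemma~\ref{LE4.2} can be invoked in the sharp form $\|\lambda v^2_x\|=o(\lambda^{-(l/2-1)})$, and extracting the localized $\|v^1_{xx}\|$‑bound from \eqref{eqq4.8} \emph{before} it meets the small factor $Q$ coming from the history term. A secondary, purely technical, nuisance is propagating the interior bounds through the shrinking cut‑offs $\varsigma_N$ (needed to control the $\varsigma'$‑collar terms) while keeping them all anchored on a fixed neighbourhood of $\mathrm{supp}\,\varsigma$.
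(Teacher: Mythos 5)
Your proposal is correct and is essentially the paper's own argument: the same bootstrap with exponents obeying $a_{N+1}=\frac{l}{2}-1+\frac{a_N}{2}$ (the paper's $l_N=\frac{l-2}{2}\sum_{k=0}^{N}2^{-k}$), the same multipliers (testing \eqref{eqq4.8} against a weighted $\varsigma\overline{v^1}$ to upgrade the bound on $\lambda v^1$, extracting the localized control of $v^1_{xx}$ from \eqref{eqq4.8}, then pairing \eqref{eqq4.9} with a weighted $\varsigma\overline{v^1_x}$ and absorbing the $\widetilde{k}_2v^2_x+\int_0^{+\infty}g\left(s\right)v^7_x\,ds$ and $\lambda^2v^2$ contributions via \eqref{eqq4.13}, Lemmas \ref{LE4.1}, \ref{LE4.2} and the base case Lemma \ref{LE4.3'}), and the same final test of \eqref{eqq4.8} against $\lambda^{l-2}\varsigma\overline{v^1}$ to get the second estimate in \eqref{eqq4.25}. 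Your nested cut-offs and the diagonal choice $N=N(\lambda)$ are simply more explicit renderings of two points the paper treats tersely (the $\varsigma'$-collar terms and the passage from $o(\lambda^{-a_N})$ for each fixed $N$ to $o(\lambda^{-(l-2)})$), not a genuinely different route.
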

\begin{proof}
	Let $\displaystyle{l_N=\frac{l-2}{2}\sum_{k=0}^{N}\frac{1}{2^k}}$. Since $ \displaystyle{\lim_{N\to+\infty}\frac{l-2}{2}\sum_{k=0}^N\frac{1}{2^k}=l-2}$,   we  prove by induction on $N \in \mathbb{N}$ that
	\begin{equation}\label{eqq4.26}
	\int_0^{L}\varsigma\left|v^1_x\right|^2dx=o\left(\frac{1}{\lambda^{l_N}}\right).
	\end{equation}
	If $N=0$, estimation \eqref{eqq4.26} holds by Lemma \ref{LE4.3'}. Suppose that
	\begin{equation}\label{eqq4.30}
	\lambda^{l_{N-1}}\int_0^{L}\varsigma\left| v^1_x\right|^2dx=o\left(1\right).
	\end{equation}
	Multiplying \eqref{eqq4.8} by $\lambda^{l_{N-1}}\varsigma\overline{ v^1}$ in $L^2\left(0,L\right).$  Then,  using the fact that $v^1$ is uniformly bounded in $L^2(0,L)$, $f^1$ converges to zero in $H^1_0\left(0,L\right)$ and $f^4$ converges to zero in $L^2\left(0,L\right)$,    we get
	\begin{equation*}
	\begin{array}{l}
	\displaystyle{
		\rho_1\lambda^{l_{N-1}+2} \intspace\varsigma\left|v^1\right|^2dx-k_1\lambda^{l_{N-1}}\intspace\varsigma\left|v^1_x\right|^2dx+k_1 \intspace\lambda^{\frac{l_{N-1}}{2}}\left(\varsigma v^2_x-\varsigma'v^1_x\right)\lambda^{\frac{l_{N-1}}{2}} \overline{v^1}dx}\\ \\
	\displaystyle{
		-\mathrm{l}\left(k_1+k_3\right)\intspace\lambda^{\frac{l_{N-1}}{2}}  v^3\lambda^{\frac{l_{N-1}}{2}}\left(\varsigma v^1\right)_xdx-\mathrm{l}k_3\lambda^{l_{N-1}} \intspace\varsigma\left|v^1\right|^2dx
		=o\left(\frac{1}{\lambda^{l-l_{N-1}}}\right)}.
	\end{array}
	\end{equation*}
	As $\frac{l_{N-1}}{2}\leq 1$, from  \eqref{eqq4.13}, \eqref{eqq4.30} and  Lemma \ref{LE4.2}, we obtain
	\begin{equation}\label{eqq4.32}
	\lambda^{l_{N-1}+2} \intspace\varsigma\left|v^1\right|^2dx=o\left(1\right).
	\end{equation}
	On the other hand, using \eqref{eqq4.32} and  Lemma \ref{LE4.2}, we get from \eqref{eqq4.8} that
	\begin{equation}\label{eqq4.33}
	\lambda^{-1+\frac{l_{N-1}}{2}}\intspace\varsigma\left|v^1_{xx}\right|^2dx=O\left(1\right).
	\end{equation}
	Multiplying  \eqref{eqq4.9} by $\lambda^{l_{N}}\varsigma\overline{ v_x^1}$ in $L^2\left(0,L\right).$  Then,  using the fact that $l_N=\frac{l-2}{2}+\frac{l_{N-1}}{2}$,   $v^1_x$ is uniformly bounded in $L^2(0,L)$, $f^2$ converges to zero in $H^1_0\left(0,L\right)$ and $f^5$ converges to zero in $L^2\left(0,L\right)$,    we get
	\begin{equation*}
	\begin{array}{l}
	\displaystyle{
		-\rho_2\intspace\lambda^{\frac{l}{2}} v^2_x\lambda^{1+\frac{l_{N-1}}{2}}\varsigma \overline{v^1}dx-\rho_2\intspace \lambda^{\frac{l}{2}} v^2\lambda^{1+\frac{l_{N-1}}{2}}\varsigma'\overline{v^1}dx-k_1\lambda^{l_{N}}\intspace\varsigma\left| v^1_x\right|^2dx}\\ \\  \displaystyle{
		-\intspace\lambda^{\frac{l}{2}}\left(\widetilde{k}_2  v^2_x+\inttime g\left(s\right)v^7_{x}ds\right)\lambda^{-1+\frac{\ell_{N-1}}{2}} \left(\varsigma \overline{v^1_{xx}}+\varsigma' \overline{v^1_x}\right)dx}\\ \\  \displaystyle{-k_1\intspace\lambda^{\frac{l}{2}-1}\left( v^2+l v^3\right)\lambda^{\frac{l_{N-1}}{2}}\varsigma \overline{v^1_{x}}dx}=o\left(\frac{1}{\lambda^{l-l_N}}\right).
	\end{array}
	\end{equation*}
	Using the fact that $2\leq l\leq 4$, \eqref{eqq4.4}, \eqref{eqq4.13}, \eqref{eqq4.30}, \eqref{eqq4.32}, \eqref{eqq4.33}, Lemma \ref{LE4.1}, and  Lemma \ref{LE4.2}, we get \eqref{eqq4.26}.  Therefore,
	\begin{equation}\label{eqq4.35}
	\intspace\varsigma\left| v^1_x\right|^2dx=o\left(\frac{1}{ \lambda^{l-2}}\right).
	\end{equation}
	Finally, multiplying \eqref{eqq4.8} by $\lambda^{l-2}\varsigma \overline{v^1}$ in $L^2\left(0,L\right)$. Then, using \eqref{eqq4.4},  \eqref{eqq4.13},    \eqref{eqq4.35}, and Lemma \ref{LE4.2}, we get  the second estimation  of \eqref{eqq4.25}.  Thus the proof is complete.
\end{proof}
\begin{lem}\label{LE4.4}
	Let $2\leq l\leq 4$. Under hypothesis {\rm (H)}, we have
	\begin{equation}\label{eqq4.41'}
	\mathrm{l}(k_1+k_3)\int_0^{L}\varsigma\left|v^3_x\right|^2dx+(k_3-k_1)\Re\left\{\intspace\lambda \varsigma v^1_{x} \lambda^{-1} \overline{v^3_{xx}}dx\right\}=o\left(1\right).
	\end{equation}
\end{lem}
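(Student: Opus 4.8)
The plan is to regard \eqref{eqq4.41'} as the polynomial-regime counterpart of the identities \eqref{eqq3.68}--\eqref{eqq3.70} from the proof of Lemma \ref{LE3.5}: I would obtain it by adding two localized multiplier identities, one coming from the $\varphi$-equation \eqref{eqq4.8} tested against $\varsigma\,\overline{v^3_x}$ and one from the $w$-equation \eqref{eqq4.10} tested against $\varsigma\,\overline{v^1_x}$, with $\varsigma$ the cut-off of Section \ref{se3}.

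First I would multiply \eqref{eqq4.8} by $\varsigma\,\overline{v^3_x}$ in $L^2(0,L)$ and integrate by parts in the term $k_1[v^1_x+v^2+\mathrm{l}v^3]_x$. The elastic contributions $k_1\mathrm{l}\!\int\varsigma|v^3_x|^2$ and $k_3\mathrm{l}\!\int\varsigma|v^3_x|^2$ combine into $\mathrm{l}(k_1+k_3)\!\int\varsigma|v^3_x|^2$; the integration by parts produces $-k_1\!\int v^1_x\,\varsigma\,\overline{v^3_{xx}}$ together with a term carrying $\varsigma'$; the remaining pieces (those containing $v^2_x$, $v^1$, and the right-hand side $h^4$) are $o(1)$ by Lemma \ref{LE4.2}, the a priori bounds \eqref{eqq4.13}, and \eqref{eqq4.4} (recall $h^4=o(\lambda^{1-l})$ and $l\ge 2$). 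This yields
\begin{equation*}
\rho_1\!\int_0^L\lambda^2 v^1\,\varsigma\,\overline{v^3_x}\,dx+\mathrm{l}(k_1+k_3)\!\int_0^L\varsigma|v^3_x|^2\,dx-k_1\!\int_0^L v^1_x\,\varsigma\,\overline{v^3_{xx}}\,dx=o(1).
\end{equation*}
Then, multiplying \eqref{eqq4.10} by $\varsigma\,\overline{v^1_x}$ in $L^2(0,L)$, integrating by parts, and using Lemma \ref{LE4.2}, the estimates $\int_0^L\varsigma|v^1_x|^2\,dx=o(1)$ and $\int_0^L\varsigma|\lambda v^1|^2\,dx=o(1)$ from Lemma \ref{LE4.3}, the bound $\|v^3_{xx}\|=O(\lambda)$ from \eqref{eqq4.15}, and $h^6=o(\lambda^{1-l})$, I would obtain
\begin{equation*}
-\rho_1\!\int_0^L\lambda^2 v^3_x\,\varsigma\,\overline{v^1}\,dx+k_3\!\int_0^L v^3_{xx}\,\varsigma\,\overline{v^1_x}\,dx=o(1).
\end{equation*}

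Adding the two identities and taking real parts, the $\rho_1\lambda^2$ terms cancel because $\lambda$ is real (so $\Re\{\int\lambda^2 v^3_x\varsigma\overline{v^1}\,dx\}=\Re\{\int\lambda^2 v^1\varsigma\overline{v^3_x}\,dx\}$, and these enter with opposite signs), and since $\Re\{k_3\int v^3_{xx}\varsigma\overline{v^1_x}\,dx\}=k_3\Re\{\int v^1_x\varsigma\overline{v^3_{xx}}\,dx\}$, what survives is $\mathrm{l}(k_1+k_3)\int\varsigma|v^3_x|^2\,dx+(k_3-k_1)\Re\{\int v^1_x\varsigma\overline{v^3_{xx}}\,dx\}=o(1)$; writing $v^1_x\overline{v^3_{xx}}=(\lambda v^1_x)(\lambda^{-1}\overline{v^3_{xx}})$ gives \eqref{eqq4.41'}. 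I expect the main difficulty to be the careful bookkeeping of powers of $\lambda$: every discarded term must be shown to be $o(1)$ uniformly for $2\le l\le 4$, and the cross term $(k_3-k_1)\Re\{\int\lambda\varsigma v^1_x\,\lambda^{-1}\overline{v^3_{xx}}\}$ genuinely cannot be absorbed into the remainder for $l<4$ (Lemma \ref{LE4.3} only gives $\int\varsigma|\lambda v^1_x|^2\,dx=o(\lambda^{4-l})$), which is exactly why the statement keeps it (in the two cases relevant to Theorems \ref{TH4.2}--\ref{TH4.3} this term is in fact harmless, being $o(1)$ when $l=4$ and vanishing when $k_1=k_3$). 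A secondary point is the handling of the $\varsigma'$ terms, which live on $\mathrm{supp}\,\varsigma'$ and are controlled by applying the preceding lemmas on a slightly enlarged cut-off, the usual device for iterating such localized estimates.
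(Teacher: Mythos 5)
Your proposal is correct and follows essentially the same route as the paper: testing \eqref{eqq4.8} against $\varsigma\overline{v^3_x}$ and \eqref{eqq4.10} against $\varsigma\overline{v^1_x}$, discarding the lower-order terms via \eqref{eqq4.13}, Lemma \ref{LE4.2} and Lemma \ref{LE4.3}, then adding and taking real parts so that the $\rho_1\lambda^2$ terms cancel and the $(k_3-k_1)$ cross term survives, exactly as in \eqref{eqq4.39}--\eqref{Eqq4.41}. Your extra remarks on the $\varsigma'$ terms and on the uniformity in $2\le l\le 4$ only make explicit points the paper leaves implicit.
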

\begin{proof} Multiplying  \eqref{eqq4.8}  by $\varsigma \overline{v^3_x}$ in $L^2\left(0,L\right)$. Then, using  \eqref{eqq4.13},  Lemma \ref{LE4.2}, Lemma  \ref{LE4.3},  $v^3_x$ is uniformly bounded in $L^2(0,L)$, $f^1$ converges to zero in $H^1_0\left(0,L\right)$ and $f^4$ converges to zero in $L^2\left(0,L\right)$,    we get
	\begin{equation}\label{eqq4.39}
	\rho_1\intspace\lambda^2 v^1\varsigma \overline{v^3_x}dx+\mathrm{l}\left( k_1+k_3\right)\int_0^{L}\varsigma\left|v^3_x\right|^2dx -k_1\intspace\lambda \varsigma v^1_{x} \lambda^{-1} \overline{v^3_{xx}}=o\left(1\right).
	\end{equation}
	Multiplying \eqref{eqq4.10} by $\varsigma\overline{ v^1_x}$ in $L^2\left(0,L\right)$. Then,  using  \eqref{eqq4.13},  
	Lemma \ref{LE4.3},  $v^1_x$ is uniformly bounded in $L^2(0,L)$, $f^3$ converges to zero in $H^1_0\left(0,L\right)$ and $f^6$ converges to zero in $L^2\left(0,L\right)$,    we get
	\begin{equation}\label{Eqq4.41}
	-\rho_1\intspace\lambda^2  \overline{v^1}\varsigma v^3_xdx+k_3\intspace\lambda\varsigma  \overline{v^1_x}\lambda^{-1}v^3_{xx}dx=o\left(1\right).
	\end{equation}
	Adding \eqref{eqq4.39} and \eqref{Eqq4.41}, then take the real part of the resulting equation, we get \eqref{eqq4.41'}. Thus the proof is complete.
\end{proof}

\noindent \textbf{Proof of Theorem \ref{TH4.2}} If  \eqref{l4} hold, take $\ell=4$ in Lemma \ref{LE4.1}, Lemma \ref{LE4.2}, and Lemma \ref{LE4.3}, we get
\begin{equation}\label{F01}
\intspace\inttime g\left(s\right)\left|v^7_x\right|^2ds dx=o\left(\frac{1}{\lambda^4}\right),\ \ \ 
\intspace\left|v^2_x\right|^2dx=o\left(\frac{1}{\lambda^4}\right).
\end{equation}
and 
\begin{equation}\label{F02}
\int_0^{L}\varsigma\left|v^1_x\right|^2dx=o\left(\frac{1}{\lambda^2}\right),\ \ \  \int^{L}_0\varsigma\left| v^1\right|^2dx=o\left(\frac{1}{\lambda^4}\right).
\end{equation}
Using \eqref{F02} and \eqref{eqq4.15}, we get 
\begin{equation}\label{aim'}
\intspace\lambda \varsigma v^1_{x} \lambda^{-1} \overline{v^3_{xx}}dx=o\left(1\right).
\end{equation}
From Lemma \ref{LE4.4} and \eqref{aim'}, we get
\begin{equation}\label{aim}
\int_0^{L}\varsigma\left|v^3_x\right|^2dx =o\left(1\right).
\end{equation}
Next, multiplying \eqref{eqq4.10} by $ \varsigma \overline{v^3}$ in $L^2\left(0,L\right).$  Then, using  
\eqref{F01}, \eqref{F02} , \eqref{aim}, $v^3$ is uniformly bounded in $L^2(0,L)$, $f^3$ converges to zero in $H^1_0\left(0,L\right)$ and $f^6$ converges to zero in $L^2\left(0,L\right)$,    we get 
\begin{equation}\label{F04}
\int_0^{L}\varsigma\left|v^3\right|^2dx =o\left(\frac{1}{\lambda^2}\right).
\end{equation}
Finally, using  \eqref{F01}, \eqref{F02},  \eqref{aim} and \eqref{F04}, we get
$\|U\|_{\mathcal{H}}=o\left(1\right),$ over $\left(\alpha+\epsilon,\beta-\epsilon\right)$. Then by applying Lemma \ref{LE3.3}, we deduce $\|U\|_{\mathcal{H}}=o\left(1\right),$ over $\left(0,L\right)$ which contradicts \eqref{eqq4.3}. This implies that $$\displaystyle{\sup_{\lambda\in\mathbb{R}}\left\|\left(i\lambda Id-\mathcal{A}\right)^{-1}\right\|_{\mathcal{L}\left(\mathcal{H}\right)}=O\left(\lambda^{4}\right)}.$$ The result follows from \cite{borichev:10}.\\

\noindent\textbf{Proof of  Theorem \ref{TH4.3}}
If  \eqref{l2} hold, take $\ell=2$ in  Lemma \ref{LE4.4}, we get directly 
\begin{equation}\label{Aim}
\int_0^{L}\varsigma\left|v^3_x\right|^2dx =o\left(1\right).
\end{equation}
Moreover, from Lemma \ref{LE4.1}, Lemma \ref{LE4.2}, and Lemma \ref{LE4.3}, we get
\begin{equation}\label{F001}
\intspace\inttime g\left(s\right)\left|v^7\right|^2 ds dx=o\left(\frac{1}{\lambda^2}\right),\ \ \ 
\intspace\left|v^2_x\right|^2dx=o\left(\frac{1}{\lambda^2}\right).
\end{equation}
and 
\begin{equation}\label{F002}
\int_0^{L}\varsigma\left|v^1_x\right|^2dx=o\left(1\right),\ \ \  \int^{L}_0\varsigma\left| v^1\right|^2dx=o\left(\frac{1}{\lambda^2}\right).
\end{equation}
Next, multiplying \eqref{eqq4.10} by $ \varsigma \overline{v^3}$ in $L^2\left(0,L\right).$  Then, using  
\eqref{Aim}, \eqref{F001}, \eqref{F002} , $v^3$ is uniformly bounded in $L^2(0,L)$, $f^3$ converges to zero in $H^1_0\left(0,L\right)$ and $f^6$ converges to zero in $L^2\left(0,L\right)$,    we get 
\begin{equation}\label{F004}
\int_0^{L}\varsigma\left|v^3\right|^2dx =o\left(\frac{1}{\lambda^2}\right).
\end{equation}
Finally, using \eqref{Aim}, \eqref{F001}, \eqref{F002}, and \eqref{F004}, we get 
$\|U\|_{\mathcal{H}}=o\left(1\right),$ over $\left(\alpha+\epsilon,\beta-\epsilon\right)$. Then by applying Lemma \ref{LE3.3}, we deduce $\|U\|_{\mathcal{H}}=o\left(1\right),$ over $\left(0,L\right)$ which contradicts \eqref{eqq4.3}. This implies that $$\displaystyle{\sup_{\lambda\in\mathbb{R}}\left\|\left(i\lambda Id-\mathcal{A}\right)^{-1}\right\|_{\mathcal{L}\left(\mathcal{H}\right)}=O\left(\lambda^{2}\right)}.$$ The result follows from \cite{borichev:10}.

\section{Thermo-elastic Bresse system with history and Cattaneo  law }\label{se6}

We can adapt similar analysis done in Section \ref{noheat}  to study the stability of the thermo-elastic Bresse system \eqref{eqq1.1'} with various boundary conditions given by \eqref{DDDD}, \eqref{DNDD} or \eqref{DNND}. In this section, we consider  system \eqref{eqq1.1'} with fully Dirichlet boundary conditions given by \eqref{DDDD} since the analysis of the stability of system \eqref{eqq1.1'} with the other boundary conditions follows easily.\\

\noindent After  introducing the new variable
\begin{equation*}
\begin{array}{lll}
\eta\left(x,t,s\right):=\psi\left(x,t\right)-\psi\left(x,t-s\right),&\text{in}& \left(0,L\right)\times\mathbb{R}_{+}\times\mathbb{R}_{+},
\end{array}
\end{equation*}
our system  \eqref{eqq1.1'}  takes the form
\begin{equation}\label{catan}
\left\{
\begin{array}{lll}
\displaystyle{
	\rho_1\varphi_{tt}-k_1 \left(\varphi_x+\psi+\mathrm{l} w\right)_x-\mathrm{l} k_3\left(w_x-\mathrm{l}\varphi\right)=0,}\\ \\
\displaystyle{\rho_2 \psi_{tt}-\left(k_2-\int_0^{+\infty}g\left(s\right)ds\right)\psi_{xx}+k_1\left(\varphi_x+\psi+\mathrm{l}w\right)-\int_0^{+\infty} g\left(s\right)\eta_{xx}ds+\delta\theta_x=0,}\\ \\
\displaystyle{\rho_1w_{tt}-k_3\left(w_x-\mathrm{l}\varphi\right)_x+\mathrm{l}k_1\left(\varphi_x+\psi+\mathrm{l}w\right)=0,}
\\ \\
\displaystyle{\eta_t+\eta_s-\psi_t=0,}\\ \\
\displaystyle{\rho_3 \theta_t+q_x+\delta \psi_{tx}=0,}\\ \\
\displaystyle{\tau q_t+\beta q+\theta_x=0,}
\end{array}
\right.
\end{equation}
with the  initial conditions
\begin{equation*}
\begin{array}{lll}
\varphi\left(\cdot,0\right)=\varphi_0\left(\cdot\right),\ \psi\left(\cdot,-t\right)=\psi_0\left(\cdot,t\right),\

w\left(\cdot,0\right)=w_0\left(\cdot\right),
\quad &\\
\varphi_t\left(\cdot,0\right)=\varphi_1\left(\cdot\right),\
\psi_t\left(\cdot,0\right)=\psi_1\left(\cdot\right),\
w_t\left(\cdot,0\right)=w_1\left(\cdot\right), \quad & \\
\theta\left(\cdot,0\right)=\theta_0\left(\cdot\right),\ q\left(\cdot,0\right)=q_0\left(\cdot\right),\quad & \\
\eta^0\left(\cdot,s\right):=\eta\left(\cdot,0,s\right)=\psi_0\left(\cdot,0\right)-\psi_0\left(\cdot,s\right),  & \text{in }(0,L), \ s\geq0,
\end{array}
\end{equation*}
and fully Dirichlet boundary conditions
\begin{equation*}
\begin{array}{lll}
\varphi\left(0,\cdot\right)=\varphi\left(L,\cdot\right)=\psi\left(0,\cdot\right)=\psi\left(L,\cdot\right)=0\quad&\text{in }\mathbb{R}_{+},\\
w\left(0,\cdot\right)=w\left(L,\cdot\right)=\theta\left(0,\cdot\right)=\theta\left(L,\cdot\right)=0\quad&\text{in }\mathbb{R}_{+},\\
\eta\left(0,\cdot,\cdot\right)=\eta\left(L,\cdot,\cdot\right)=0\quad&\text{in }\mathbb{R}_{+}\times\mathbb{R}_{+},\\
\eta\left(\cdot,\cdot,0\right)=0&\text{in} \left(0,L\right)\times\mathbb{R}_{+}.
\end{array}
\end{equation*}
We consider the energy space
\begin{equation*}
\mathcal{H}=\left(H_0^1\left(0,L\right)\right)^3\times\left(L^2\left(0,L\right)\right)^3\times L^2_g\left(\mathbb{R}_+,H^1_0\right)\times\left(L^2\left(0,L\right)\right)^2,
\end{equation*}
equipped with the norm
\begin{equation*}
\begin{array}{ll}
\displaystyle{\|U\|_{\mathcal{H}}^2}&= \displaystyle{\|\left(v^1,v^2,v^3,v^4,v^5,v^6,v^7,v^8,v^9\right)\|_{\mathcal{H}}^{2}}\\
&=\rho_1\left\|v^4\right\|^2+\rho_2\left\|v^5\right\|^2+\rho_1\left\|v^6\right\|^2+k_1\left\|v^1_x+v^2+\mathrm{l} v^3\right\|^2+\widetilde{k}_2\left\|v^2_x\right\|^2\\
&+k_3\left\|v^3_x-\mathrm{l} v^1\right\|^2+\left\|v^7\right\|^2_{g}+\rho_3\left\|v^8\right\|^2+\tau\left\|v^9\right\|^2.
\end{array}
\end{equation*}
Consider the linear unbounded operator $\mathcal{A}:D\left(\mathcal{A}\right)\to \mathcal{H}$ defined by
\begin{equation*}
\begin{array}{l}
D\left(\mathcal{A}\right)=\bigg\{\ U\in\mathcal{H} \ |\  v^1,v^3\in H^2\left(0,L\right),\  v^4,v^5,v^6\in  H^1_0\left(0,L\right),\\ \hspace{2cm} v^7_s\in L^2_g\left(\mathbb{R}_+,H^1_0\right),\   v^8\in H^1_0\left(0,L\right),\ v^9\in H^1\left(0,L\right),\\ \hspace{2cm} \ v^2+\int_0^{+\infty}g\left(s\right)v^7ds\in H^2\left(0,L\right)\cap H^1_0\left(0,L\right),\ v^7\left(x,0\right)=0\bigg\}  
\end{array}
\end{equation*}
and
\begin{equation*}
\mathcal{A}\left(\begin{array}{l}
v^1\\ v^2\\ v^3\\ v^4\\ v^5\\ v^6\\ v^7\\ v^8\\ v^9
\end{array}\right)=\left(\begin{array}{c}
v^4\\ v^5\\ v^6\\
\rho_1^{-1}\left(k_1 \left(v^1_x+v^2+\mathrm{l} v^3\right)_x+\mathrm{l} k_3\left(v^3_x-\mathrm{l} v^1\right)\right)\\
\rho_2^{-1} \left(\widetilde{k}_2 v^2_{xx}-k_1\left(v^1_x+v^2+\mathrm{l} v^3\right)+\int_0^{+\infty} g\left(s\right)v^7_{xx}ds-\delta v^8_x\right)\\
\rho_1^{-1}\left(k_3\left(v^3_x-\mathrm{l} v^1\right)_x-\mathrm{l} k_1\left(v^1_x+v^2+\mathrm{l} v^3\right)\right)\\
v^5-v^7_s\\ \rho^{-1}_3\left(-\delta v^5_x-v^9_x\right)\\
\tau^{-1}\left(-v^8_x-\beta v^9\right)
\end{array}\right),
\end{equation*}
for all $U=\left(v^1,v^2,v^3,v^4,v^5,v^6,v^7,v^8,v^9\right)^{\mathsf{T}}\in D\left(\mathcal{A}\right). $\\
Then system \eqref{eqq1.1'} is equivalent to the Cauchy problem
\begin{equation}\label{eqq2.7}
\left\{
\begin{array}{c}
U_t=\mathcal{A}U,\\
U\left(x,0\right)=U^0\left(x\right),
\end{array}
\right.
\end{equation}
where
$$
U=\left(\varphi,\psi,w,\varphi_t,\psi_t,w_t,\eta,\theta,q\right)^{\mathsf{T}}
$$
and $$
U^0\left(x\right)=\left(\varphi_0\left(x\right),\psi_0\left(x,0\right),w_0\left(x\right),
\varphi_1\left(x\right),\psi_1\left(x\right),w_1\left(x\right),\eta^0\left(x,.\right), \theta_0(x), q_0(x) \right)^{\mathsf{T}}.
$$
Note that $D(\mathcal{A})$ is dense in $\mathcal{H}$ and  that for all $U\in D\left(\mathcal{A}\right)$, we have
\begin{equation}\label{A7.3}
\left<\mathcal{A}U,U\right>_{\mathcal{H}}=\frac{1}{2}
\int_0^{L}\int_0^{+\infty}g'\left(s\right)\left| v^7_x\right|^2dsdx-\beta\int_0^{L}\left|v^9\right|^2dx.
\end{equation}
Consequently, under hypothesis {\rm(H)}, the system becomes dissipative. We can easily adapt the proofs in Subsection \ref{se2} and Subsection \ref{strong}
to prove the well-posedness and the strong stability of system \eqref{catan}. Furthermore,
similar to \cite{santos}, we define the following stability number
$$
\chi_0=\left(\tau-\frac{\rho_1}{\rho_3k_1}\right)\left(\rho_2-\frac{k_2\rho_1}{k_1}\right)-\frac{\tau\rho_1\delta^2}{\rho_3k_1}.
$$
\begin{thm}\label{expcatan}
	Under hypothesis  {\rm (H)},  if
	\begin{equation}\label{A7.4}
	\chi_0=0  \ \ \textrm{and} \ \ k_1=k_3,
	\end{equation}
	then system \eqref{catan} with fully Dirichlet boundary conditions is exponentially stable.
\end{thm}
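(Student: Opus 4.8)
The plan is to use the Huang--Pr\"uss frequency domain criterion, exactly as in the proof of Theorem \ref{TH3.5}: since, by \eqref{A7.3} and hypothesis (H), $\mathcal{A}$ generates a bounded $C_0$-semigroup, it suffices to establish (H1) $i\mathbb{R}\subseteq\rho(\mathcal{A})$ and (H2) $\sup_{\lambda\in\mathbb{R}}\|(i\lambda Id-\mathcal{A})^{-1}\|_{\mathcal{L}(\mathcal{H})}<+\infty$. For (H1) I would adapt Lemma \ref{Th3.1} and Lemma \ref{compactness} to the enlarged state: taking the real part of $\langle\mathcal{A}U,U\rangle$ for a would-be imaginary eigenvalue forces $v^7=0$ and $v^9=0$ by \eqref{A7.3}; Cattaneo's law then gives $v^8_x=0$, hence $v^8=0$ by the Dirichlet condition on $\theta$, and the remaining equations reduce to the conservative elastic Bresse system, which has only the trivial solution by the same ODE computation as in Lemma \ref{Th3.1}. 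Surjectivity of $i\lambda-\mathcal{A}$ follows from the Lax--Milgram / Fredholm argument of Lemma \ref{compactness}, the two first-order thermal equations being solved explicitly for $v^8,v^9$ in terms of the elastic unknowns.

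For (H2) I argue by contradiction: suppose $\lambda_n\to+\infty$, $U_n\in D(\mathcal{A})$ with $\|U_n\|_{\mathcal{H}}=1$ and $(i\lambda_n-\mathcal{A})U_n\to0$ in $\mathcal{H}$, drop the index $n$, and write out the seven elastic/memory component equations together with the thermal equations $i\lambda\rho_3 v^8+v^9_x+\delta v^5_x=o(1)$ and $i\lambda\tau v^9+v^8_x+\beta v^9=o(1)$. The dissipation identity \eqref{A7.3} gives immediately $\int_0^L\int_0^{+\infty}g(s)|v^7_x|^2\,ds\,dx=o(1)$ and $\|v^9\|=o(1)$. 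Next I would transfer the damping of the heat flux to the temperature by testing the two thermal equations with $\overline{v^8}$ and $\overline{v^9}$ respectively (the boundary terms vanish by the Dirichlet condition on $\theta$), and then, through the coupling term $\delta\theta_x$ in the second equation of \eqref{catan} together with the memory bound of Lemma \ref{LE3.1}, deduce the analogue of Lemma \ref{LE3.2}, namely $\|v^2_x\|=o(1)$ and $\|\lambda v^2\|=o(1)$. No use of $\chi_0$ is made up to this point.

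The decisive step is the analogue of Lemma \ref{LE3.4}: recovering $\int_0^L\varsigma|v^1_x|^2\,dx=o(1)$ and $\int_0^L\varsigma|\lambda v^1|^2\,dx=o(1)$ for the first wave, where $\varsigma$ is the cut-off function of Section \ref{se3}. Here one multiplies the $\psi$-equation by $\varsigma\,\overline{v^1_x}$, the $\varphi$-equation by a suitable combination of $\varsigma$, $\overline{v^2_x}$, $\int_0^{+\infty}g(s)\overline{v^7_x}\,ds$ and $\varsigma\,\overline{v^8}$, and the thermal equations by $\varsigma\,\lambda\overline{v^1}$, and adds: the hypothesis $\chi_0=0$ is precisely the algebraic identity that cancels the leading-order coefficient of this combination, so that no positive power of $\lambda$ survives and the right-hand side is $o(1)$. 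Once the $\varphi$-wave is controlled, the $w$-wave is treated as in Lemma \ref{LE3.5}: multiplying the $\varphi$- and $w$-equations by $\varsigma\,\overline{v^3_x}$ and $\varsigma\,\overline{v^1_x}$ and adding, the cross term cancels thanks to $k_1=k_3$, giving $\int_0^L\varsigma|v^3_x|^2\,dx=o(1)$ and then $\int_0^L\varsigma|\lambda v^3|^2\,dx=o(1)$. Together with the already established smallness of $v^7$, $v^8$, $v^9$, this yields $\|U\|_{\mathcal{H}}=o(1)$ on $[\alpha+\epsilon,\beta-\epsilon]$, and Lemma \ref{LE3.3} (which carries over verbatim) propagates it to all of $(0,L)$, contradicting $\|U\|_{\mathcal{H}}=1$.

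The main obstacle is twofold. First, as always under fully Dirichlet boundary conditions, all the multiplier identities generate non-vanishing pointwise boundary terms, forcing one to work with the cut-off $\varsigma$ and then globalize via Lemma \ref{LE3.3}. Second, and more delicate, is the bookkeeping showing that the $\chi_0=0$ cancellation is exact at top order in $\lambda$: the computation mixes the thermal constants $\tau,\rho_3,\delta,\beta$ with the elastic ones $\rho_1,\rho_2,k_1,k_2$ and with the memory corrections to $k_2$, and one must check that these corrections differ from $\widetilde{k}_2=k_2-g_0$ by quantities of order $O(1/\lambda)$, hence do not perturb the identity $\chi_0=0$ at the critical order. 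If $\chi_0\neq0$ the same scheme only produces a polynomial resolvent bound, in agreement with $\chi_0$ being the sharp stability number.
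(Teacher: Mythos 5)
Your proposal is correct and follows essentially the same route as the paper: the Huang--Pr\"uss criterion with a contradiction argument, the dissipation identity \eqref{A7.3} giving smallness of $v^7$ and $v^9$, thermal multipliers yielding control of $v^8$ and then $v^2$, the combined multipliers on the $\varphi$-equation and the two thermal equations whose leading $\lambda^2$-term cancels exactly when $\chi_0=0$ (as in \eqref{AA7.19}--\eqref{AA7.20}), the $k_1=k_3$ cancellation for $v^3$ as in Lemma \ref{LE3.5}, and globalization via Lemma \ref{LE3.3}. No substantive differences from the paper's proof of Theorem \ref{expcatan}.
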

\begin{proof}Similar to Theorem \ref{TH3.5}, we have to check  conditions {\rm (H1)} and {\rm (H2)}. We will prove condition (H2) by a contradiction argument.
	Suppose that there exists  a sequence of real numbers $\left(\lambda_n\right)_n$, with  $|\lambda_n|\to+\infty,$ and a sequence of vectors
	\begin{equation}\label{AA7.5}
	U_n=\left(v^1_n,v^2_n,v_n^3,v^4_n,v^5_n,v^6_n,v^7_n,v^8_n,v^9_n\right)^{\mathsf{T}}\in D\left(\mathcal{A}\right) \ \text{ with }\ \|U_n\|_{\mathcal{H}}=1
	\end{equation}  such that
	\begin{equation}\label{AA7.6}
	\mathit{i}\lambda_n U_n-\mathcal{A}U_n=\left(f^1_n,f^2_n,f_n^3,f^4_n,f^5_n,f^6_n,f^7_n,f^8_n,f^9_n\right)^{\mathsf{T}}\to 0\ \text{ in } \mathcal{H}.
	\end{equation}
	Equivalently, we have 
	\begin{eqnarray}
	\mathit{i}\lambda_n v^1_n-v^4_n&=&h^1_n,\label{AA7.7}
	\\
	\mathit{i}\lambda_n v^2_n-v^5_n&=&h^2_n,\label{AA7.8}
	\\
	\mathit{i}\lambda_n v^3_n-v^6_n&=&h^3_n,\label{AA7.9}
	\\
	\rho_1\lambda_n^2 v^1_n+k_1 \left[\left(v^1_n\right)_x+v^2_n+\mathrm{l} v^3_n\right]_x+\mathrm{l} k_3\left[\left(v^3_n\right)_x-\mathrm{l} v^1_n\right]
	&=& h^4_n,\label{AA7.10}
	\\
	\rho_2\lambda_n^2 v^2_n    +\widetilde{k}_2 \left(v^2_n\right)_{xx}-k_1\left[\left(v^1_n\right)_x+v^2_n+\mathrm{l} v^3_n\right]+\int_0^{+\infty} g\left(s\right)\left(v^7_n\right)_{xx} ds-\delta\left(v^8_n\right)_x&=& h^5_n,\label{AA7.11}
	\\
	\rho_1\lambda_n^2 v^3_n+k_3\left[\left(v^3_n\right)_x-\mathrm{l} v^1_n\right]_x-\mathrm{l} k_1\left[\left(v^1_n\right)_x+v^2_n+\mathrm{l} v^3_n\right]&=& h^6_n,\label{AA7.12}
	\\
	\mathit{i}\lambda_n v^7_n +\left(v^7_n\right)_{s}-\mathit{i}\lambda_n v^2_n &=&h^7_n,\label{AA7.13}\\
	\mathit{i}\rho_3\lambda_n v^8_n+\mathit{i}\delta\lambda_n \left( v^2_n\right)_x+\left(v^9_n\right)_x&=&h^8_n\label{AA7.14}\\
	\mathit{i}\tau\lambda_n v^9_n+\beta v^9_n+\left(v^8_n\right)_x&=&h^9_n,\label{AA7.15}
	\end{eqnarray}
	where
	\begin{equation*}
	\left\{
	\begin{array}{ll}
	\displaystyle{h^1_n=f^1_n,\ h^2_n=f^2_n,\ h^3_n=f^3_n,} \\ \\
	\displaystyle{h^4_n=-\rho_1\left( f^4_n+\mathit{i}\lambda_nf^1_n\right),\ h^5_n=-\rho_2\left(  f^5_n+\mathit{i}\lambda_nf^2_n\right),\
		h^6_n=-\rho_1 \left( f^6_n+\mathit{i}\lambda_nf^3_n\right),} \\ \\ 
	\displaystyle{ h^7_n=f^7_n-f^2_n,\ h^8_n=\rho_3 f^8_n+\delta\left(f^2_n\right)_x,\ h^9_n=\tau f^9_n.}
	\end{array}
	\right.
	\end{equation*}
	In the sequel, for shortness, we drop the index $n$. Taking the inner product of \eqref{AA7.6} with $U$ in $\mathcal{H}$. Then, using  \eqref{A7.3}, hypothesis  {\rm (H)} and the fact that $U$ is uniformly bounded in $\mathcal{H}$, we get 
	\begin{equation}\label{AA7.16}
	\intspace\inttime g\left(s\right) \left| v^7_x\right|^2ds dx=o\left(1\right)\ \  \text{ and }\ \ \intspace\left|v^9\right|^2dx=o\left(1\right).
	\end{equation}
	Similar to  Lemma \ref{LE3.2}, multiplying  \eqref{AA7.13}  by $\overline{v^2}$
	in $L^2_g\left(\mathbb{R}_+,H_0^1\right)$. Then, using \eqref{AA7.5} and \eqref{AA7.16},   we get
	\begin{equation}\label{AA7.17}
	\intspace\left|v^2_x\right|^2dx={o\left(1\right)}.
	\end{equation}
	Multiplying  \eqref{AA7.14} and \eqref{AA7.15}  by $\overline{v^8}$ and $\overline{v^9}$ respectively in $L^2\left(0,L\right)$. Then, using \eqref{AA7.6},  \eqref{AA7.16} and \eqref{AA7.17}, we get
	\begin{equation}\label{AA7.18}
	\intspace\left|v^8\right|^2dx=o\left(1\right).
	\end{equation}
	Multiplying \eqref{AA7.11} by $ \overline{v^2}$  in $L^2\left(0,L\right)$. Then, using \eqref{AA7.6} and \eqref{AA7.16}- \eqref{AA7.18}, we get
	\begin{equation}\label{NAA7.17}
	\intspace\left|\lambda v^2\right|^2dx=o\left(1\right).
	\end{equation}
	Multiplying  \eqref{AA7.10} and \eqref{AA7.11}  by  $\frac{\varsigma}{k_1}\left(\widetilde{k}_2\overline{v^2_{x}}+\int_0^{+\infty} g\left(s\right)\overline{v^7_{x}}ds\right)$ and $\varsigma \overline{v^1_x}$ respectively in $L^2\left(0,L\right)$. Then, take the real part of the resulting equation,  using \eqref{AA7.6} and  \eqref{AA7.16}-\eqref{AA7.18},  we get
	\begin{equation}\label{AA7.19}
	k_1\int_0^{L}\varsigma\left| v^1_x\right|^2dx+\lambda^2\left( \rho_2
	-\frac{\rho_1k_2}{k_1}\right)\Re\left\{\int_0^{L}\varsigma  v^2_x\overline{v^1}dx\right\}+\delta\Re\left\{\int_0^Lv^8_x\overline{v^1_x}dx\right\}=o\left(1\right),
	\end{equation}
	where  $\varsigma$ is the cut-off function defined in Subsection  \ref{se3}. Multiplying \eqref{AA7.10}, \eqref{AA7.14} , and \eqref{AA7.15}  by $\frac{\rho_3\tau}{\rho_1}\varsigma \overline{v^8}$ , $\mathit{i}\tau \lambda \varsigma \overline{v^1}$, and $\varsigma \overline{v^1_x}$  respectively in $L^2\left(0,L\right)$. Then, take the real part of the resulting equation,  using \eqref{AA7.6} and \eqref{AA7.16}-\eqref{AA7.18},  we get
	\begin{equation}\label{AA7.20}
	\lambda^2\Re\left\{\intspace\varsigma  v^2_x\overline{v^1}dx\right\}=-\frac{\rho_3 k_1}{\rho_1\delta\tau}\left(\tau-\frac{\rho_1}{\rho_3k_1}\right)\Re\left\{\intspace
	\varsigma  v^8_x\overline{v^1}dx\right\}+o\left(1\right).
	\end{equation}
	Inserting \eqref{AA7.20} in \eqref{AA7.19}, we get
	\begin{equation*}
	k_1\int_0^{L}\varsigma\left| v^1_x\right|^2dx-\frac{\rho_3k_1}{\rho_1\delta\tau}\chi_0 \Re\left\{\intspace v^8_x\overline{v^1_x}dx\right\}=o\left(1\right).
	\end{equation*}
	Using the fact that $\chi_0=0$, we get
	\begin{equation}\label{AA7.21}
	\int_0^{L}\varsigma\left| v^1_x\right|^2dx=o\left(1\right).
	\end{equation}
	Multiplying \eqref{AA7.10} by $\varsigma \overline{v^1}$  in $L^2\left(0,L\right)$. Then, using \eqref{AA7.6}, \eqref{AA7.17} and \eqref{AA7.21}, we get
	\begin{equation}\label{AA7.22}
	\int_0^{L}\varsigma\left|\lambda v^1\right|^2dx=o\left(1\right).
	\end{equation}
	Multiplying \eqref{AA7.10} and \eqref{AA7.12} by $\varsigma \overline{v^3_x}$   and $\varsigma \overline{v^1_x}$  respectively in $L^2\left(0,L\right)$. Then, take the real part of the resulting equation, using \eqref{AA7.6}, \eqref{AA7.16}-\eqref{AA7.17} and \eqref{AA7.21},  we get
	\begin{equation*}
	\mathrm{l}\left( k_1+k_3\right)\int_0^{L}\varsigma\left|v^3_x\right|^2dx +\left(k_3-k_1\right)\Re\left\{\intspace v^1_{x}\varsigma \overline{v^3_{xx}}dx\right\}=o\left(1\right).
	\end{equation*}
	Using the fact that $k_1=k_3$, we get
	\begin{equation}\label{AA7.23}
	\int_0^{L}\varsigma\left|v^3_x\right|^2dx=o\left(1\right).
	\end{equation}
	Moreover, multiplying \eqref{AA7.12} by $\varsigma \overline{v^3}$ in $L^2\left(0,L\right)$. Then, using \eqref{AA7.6}, \eqref{AA7.17} and \eqref{AA7.21}-\eqref{AA7.23}, we get
	\begin{equation}\label{AA7.24}
	\int_0^{L}\varsigma\left|\lambda v^3\right|^2dx=o\left(1\right).
	\end{equation}
	Finally, using \eqref{AA7.16}-\eqref{NAA7.17} and \eqref{AA7.21}-\eqref{AA7.24}, we can proceed similar to the proof of Theorem \ref{TH3.5} to get  the result of Theorem \ref{expcatan}.
\end{proof}
Note that when $\tau=0$, Cattaneo's law turns into Fourier law. In this case,  condition \eqref{A7.4} becomes equivalent to \eqref{eqq3.1}. However, if $\chi_0\neq0$ we can adapt the proof of Theorem \ref{TH4.2} and Theorem \ref{TH4.3} to show the following Theorems:

\begin{thm}\label{polycatan1}
	Under hypothesis {\rm (H)},  if \begin{equation}\label{conditionA}
	\chi_0\neq0\ \textrm{ and } \ k_1\neq k_3,
	\end{equation}
	then system \eqref{catan} with fully Dirichlet boundary conditions
	is polynomially stable with an energy  rate of decay $\displaystyle{\frac{1}{\sqrt{t}}}$, i.e,  there exists  $c>0$ such that for every $U^0\in D\left(\mathcal{A}\right)$, we have
	\begin{equation}\label{polA}
	E\left(t\right)\leq \frac{c}{\sqrt{t}}\left\|U^0\right\|^2_{D\left(\mathcal{A}\right)},\quad\ t>0.
	\end{equation}
\end{thm}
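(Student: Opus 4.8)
The plan is to apply the Borichev--Tomilov criterion \cite{borichev:10}: the strong stability of \eqref{catan} (obtained by adapting Subsection \ref{strong}) already gives $i\mathbb{R}\subseteq\rho(\mathcal{A})$, so it remains to establish the frequency-domain resolvent bound
\[
\sup_{\lambda\in\mathbb{R}}\left\|\left(i\lambda Id-\mathcal{A}\right)^{-1}\right\|_{\mathcal{L}(\mathcal{H})}=O\left(|\lambda|^{4}\right).
\]
As in the proof of Theorem \ref{TH4.2}, I would argue by contradiction, assuming a sequence $\lambda_n\to+\infty$ and $U_n\in D(\mathcal{A})$ with $\|U_n\|_{\mathcal{H}}=1$ and $\lambda_n^{4}(i\lambda_n U_n-\mathcal{A}U_n)\to0$ in $\mathcal{H}$, then dropping the index $n$ and writing the nine resolvent equations \eqref{AA7.7}--\eqref{AA7.15} with right-hand sides scaled by $\lambda^{-4}$. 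Taking the inner product of the scaled equation with $U$ and invoking \eqref{A7.3} together with (H) yields, exactly as in Lemma \ref{LE4.1}, the two dissipation estimates $\int_0^L\int_0^{+\infty}g(s)|v^7_x|^2\,ds\,dx=o(\lambda^{-4})$ and $\int_0^L|v^9|^2\,dx=o(\lambda^{-4})$.

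Next I would import the bootstrap of Lemmas \ref{LE4.2'} and \ref{LE4.2}: testing \eqref{AA7.13} against the successive multipliers $\lambda^{l_N}\overline{v^2}$ in $L^2_g(\mathbb{R}_+,H^1_0)$, with $l_N=\frac{l}{2}\sum_{k=0}^N\frac{1}{2^k}$, gives $\int_0^L|v^2_x|^2\,dx=o(\lambda^{-4})$, and then testing \eqref{AA7.11} against $\overline{v^2}$ gives $\int_0^L|\lambda v^2|^2\,dx=o(\lambda^{-4})$. The thermal pair is handled as in \eqref{AA7.18}: testing \eqref{AA7.14} against $\overline{v^8}$ and \eqref{AA7.15} against $\overline{v^9}$ and using the estimates already obtained bounds $\|v^8\|$ with the appropriate decay, while \eqref{AA7.15} expresses $v^8_x$ through $v^9$, so that $\|v^8_x\|=o(\lambda^{-1})$ after the scaling. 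These are precisely the ingredients needed to transplant the $\chi_0$-argument of Theorem \ref{expcatan} to the polynomial setting.

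The decisive step is the weighted version of the multiplier combination \eqref{AA7.19}--\eqref{AA7.21}. One tests \eqref{AA7.10} against $\frac{\varsigma}{k_1}(\widetilde{k}_2\overline{v^2_x}+\int_0^{+\infty}g(s)\overline{v^7_x}\,ds)$ and \eqref{AA7.11} against $\varsigma\overline{v^1_x}$ to reach the analogue of \eqref{AA7.19}, and separately tests \eqref{AA7.10}, \eqref{AA7.14}, \eqref{AA7.15} against $\frac{\rho_3\tau}{\rho_1}\varsigma\overline{v^8}$, $i\tau\lambda\varsigma\overline{v^1}$, $\varsigma\overline{v^1_x}$ to reach the analogue of \eqref{AA7.20}, but now each multiplier is weighted by the power of $\lambda$ prescribed by the induction $l_N=\frac{l-2}{2}\sum_{k=0}^N\frac{1}{2^k}$ of Lemma \ref{LE4.3}. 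Substituting the second identity into the first cancels the term $\lambda^2\Re\left\{\int_0^L\varsigma v^2_x\overline{v^1}\,dx\right\}$ and leaves
\[
k_1\int_0^L\varsigma|v^1_x|^2\,dx-\frac{\rho_3k_1}{\rho_1\delta\tau}\,\chi_0\,\Re\left\{\int_0^L v^8_x\overline{v^1_x}\,dx\right\}=o\left(\lambda^{-(l-2)}\right);
\]
since $\chi_0\neq0$ the second term no longer vanishes, but a Young inequality combined with $\|v^8_x\|=o(\lambda^{-1})$ absorbs it, giving $\int_0^L\varsigma|v^1_x|^2\,dx=o(\lambda^{-(l-2)})$ and then, testing \eqref{AA7.10} against $\varsigma\overline{v^1}$, $\int_0^L\varsigma|\lambda v^1|^2\,dx=o(\lambda^{-l})$.

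Finally I would treat the longitudinal component as in Lemma \ref{LE4.4}: adding \eqref{AA7.10} tested against $\varsigma\overline{v^3_x}$ and \eqref{AA7.12} tested against $\varsigma\overline{v^1_x}$ gives
\[
\mathrm{l}(k_1+k_3)\int_0^L\varsigma|v^3_x|^2\,dx+(k_3-k_1)\Re\left\{\int_0^L\lambda\varsigma v^1_x\,\lambda^{-1}\overline{v^3_{xx}}\,dx\right\}=o(1).
\]
Because $k_1\neq k_3$ the cross term does not vanish and must be dominated by pairing $\|\lambda v^1_x\|_{L^2(\varsigma)}=o(1)$ --- which holds only thanks to the sharp bound $\int_0^L\varsigma|v^1_x|^2\,dx=o(\lambda^{-2})$, i.e. $l=4$ --- against the a priori estimate $\|v^3_{xx}\|=O(\lambda)$ from \eqref{eqq4.15}; it is exactly this pairing that forces $l=4$, whereas $l=2$ would suffice if $k_1=k_3$. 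Hence $\int_0^L\varsigma|v^3_x|^2\,dx=o(1)$, and testing \eqref{AA7.12} against $\varsigma\overline{v^3}$ gives $\int_0^L\varsigma|\lambda v^3|^2\,dx=o(\lambda^{-2})$. Collecting all estimates shows $\|U\|_{\mathcal{H}}=o(1)$ on $(\alpha+\epsilon,\beta-\epsilon)$, and the straightforward analogue of Lemma \ref{LE3.3} for the thermo-elastic operator propagates this to $(0,L)$, contradicting $\|U\|_{\mathcal{H}}=1$; by \cite{borichev:10} this gives \eqref{polA}. I expect the main obstacle to be the bookkeeping in the $\chi_0$-step: each of the several multipliers must carry exactly the right power of $\lambda$ so that all thermal and memory contributions land at size $o(\lambda^{-(l-2)})$ while the delicate cancellation producing the coefficient $\chi_0$ goes through verbatim, mirroring \eqref{AA7.19}--\eqref{AA7.21} but at the polynomial level.
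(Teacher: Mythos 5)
Your proposal is correct in outline and follows the same architecture as the paper's proof: Borichev--Tomilov with the resolvent bound $O(|\lambda|^4)$, the contradiction sequence scaled by $\lambda^l$ with $l=4$, the dissipation estimates for $v^7$ and $v^9$, the bound on $v^8_x$ obtained from the Cattaneo equation \eqref{A7.16} (the paper's \eqref{catpol3}), the weighted bootstrap giving $\int_0^L\varsigma|v^1_x|^2dx=o(\lambda^{-(l-2)})$ and $\int_0^L\varsigma|v^1|^2dx=o(\lambda^{-l})$, and the final $k_1\neq k_3$ step \eqref{A7.41}, where the choice $l=4$ is exactly what makes the cross term $\int_0^L\lambda\varsigma v^1_x\,\lambda^{-1}\overline{v^3_{xx}}\,dx$ vanish against $\|v^3_{xx}\|=O(\lambda)$. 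Where you diverge is the step producing the $v^1_x$ estimate: you rerun the weighted $\chi_0$-identity \eqref{AA7.19}--\eqref{AA7.20} from the exponential Theorem \ref{expcatan} and then absorb the surviving $\chi_0\,\Re\{\int\varsigma v^8_x\overline{v^1_x}dx\}$ by Young's inequality using $\|v^8_x\|^2=o(\lambda^{-(l-2)})$. The paper does not do this: it never uses $\chi_0$ in the polynomial argument at all; it simply adapts Lemmas \ref{LE4.3'} and \ref{LE4.3} from the elastic case, treating the extra thermal term $\delta\int\varsigma v^8_x\overline{v^1_x}dx$ directly with \eqref{catpol3}. Your detour should work (the absorbed term is $o(\lambda^{l_N-(l-2)})=o(1)$ at each stage of the induction), but it is strictly unnecessary and costs you the extra bookkeeping of the second identity (multipliers $\frac{\rho_3\tau}{\rho_1}\varsigma\overline{v^8}$, $i\tau\lambda\varsigma\overline{v^1}$, $\varsigma\overline{v^1_x}$) under the $\lambda^{l_N}$ weights, which you only sketch; in the paper $\chi_0\neq0$ plays no active role, it merely marks the complement of the exponential regime. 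Two minor overclaims to correct (harmless, since only the weaker versions are needed downstream): testing \eqref{A7.12} against $\overline{v^2}$ yields $\int_0^L|\lambda v^2|^2dx=o(\lambda^{-l/2})$, not $o(\lambda^{-4})$, because of the term $k_1\int v^1_x\overline{v^2}dx$; and the last step gives $\int_0^L\varsigma|v^3|^2dx=o(\lambda^{-2})$, i.e.\ $\int_0^L\varsigma|\lambda v^3|^2dx=o(1)$, not $o(\lambda^{-2})$.
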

\begin{thm}\label{polycatan2}
	Under hypothesis {\rm (H)},  if
	\begin{equation}\label{conditionB}
	\chi_0\neq0 \ \textrm{ and } \  k_1= k_3,
	\end{equation} then system \eqref{catan} with fully Dirichlet boundary conditions
	is polynomially stable with an energy  rate of decay $\displaystyle{\frac{1}{t}}$, i.e,  there exists  $c>0$ such that for every $U^0\in D\left(\mathcal{A}\right)$, we have
	\begin{equation}\label{polB}
	E\left(t\right)\leq \frac{c}{t}\left\|U^0\right\|^2_{D\left(\mathcal{A}\right)},\quad\ t>0.
	\end{equation}
\end{thm}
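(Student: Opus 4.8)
The plan is to follow the route of Theorem~\ref{TH4.3} (the elastic rate-$1/t$ result), inserting the thermal coupling exactly as in the proof of Theorem~\ref{expcatan}. Adapting Subsections~\ref{se2} and~\ref{strong} to the thermo-elastic operator $\mathcal{A}$ gives $i\mathbb{R}\subseteq\rho(\mathcal{A})$, so by \cite{borichev:10} it suffices to establish {\rm (H3)} with $l=2$, namely $\sup_{\lambda\in\mathbb{R}}\|(i\lambda Id-\mathcal{A})^{-1}\|_{\mathcal{L}(\mathcal{H})}=O(|\lambda|^{2})$. I would argue by contradiction: assume there are $\lambda_n\to+\infty$ and $U_n\in D(\mathcal{A})$ with $\|U_n\|_{\mathcal{H}}=1$ and $\lambda_n^{2}(i\lambda_n U_n-\mathcal{A}U_n)\to 0$ in $\mathcal{H}$; dropping the index $n$ this unfolds into the system \eqref{AA7.7}--\eqref{AA7.15} with right-hand sides of size $o(1/\lambda)$, and the aim is to reach the contradiction $\|U\|_{\mathcal{H}}=o(1)$.

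First I would gather the ``free'' estimates. Testing the scaled equation with $U$ and using \eqref{A7.3} gives, under {\rm (H)}, $\intspace\inttime g(s)|v^7_x|^2\,ds\,dx=o(1/\lambda^{2})$ and $\intspace|v^9|^2\,dx=o(1/\lambda^{2})$ (as in Lemma~\ref{LE4.1}). Testing the memory equation \eqref{AA7.13} with $\overline{v^2}$ and bootstrapping as in Lemmas~\ref{LE4.2'}--\ref{LE4.2} yields $\intspace|v^2_x|^2\,dx=o(1/\lambda^{2})$. Testing the Cattaneo equations \eqref{AA7.14}--\eqref{AA7.15} with $\overline{v^8}$ and $\overline{v^9}$ and integrating by parts gives $\intspace|v^8|^2\,dx=o(1/\lambda^{2})$ and, crucially, $\intspace|v^8_x|^2\,dx=o(1)$ --- the last bound using $\lambda\|v^9\|=o(1)$, which holds precisely because of the $l=2$ scaling. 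Finally, testing \eqref{AA7.11} with $\overline{v^2}$ gives $\intspace|\lambda v^2|^2\,dx=o(1)$.

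The core of the argument is the $\varphi$-estimate, performed just as in the proof of Theorem~\ref{expcatan}: test \eqref{AA7.11} with $\varsigma\overline{v^1_x}$ and \eqref{AA7.10} with $\frac{\varsigma}{k_1}\big(\widetilde{k}_2\overline{v^2_x}+\inttime g(s)\overline{v^7_x}\,ds\big)$ to obtain the analogue of \eqref{AA7.19}; test \eqref{AA7.10}, \eqref{AA7.14}, \eqref{AA7.15} with $\frac{\rho_3\tau}{\rho_1}\varsigma\overline{v^8}$, $i\tau\lambda\varsigma\overline{v^1}$, $\varsigma\overline{v^1_x}$ to obtain the analogue of \eqref{AA7.20}; eliminating the term $\lambda^2\Re\{\intspace\varsigma v^2_x\overline{v^1}\,dx\}$ between the two yields $k_1\int_0^{L}\varsigma|v^1_x|^2\,dx-\frac{\rho_3 k_1}{\rho_1\delta\tau}\chi_0\,\Re\{\intspace v^8_x\overline{v^1_x}\,dx\}=o(1)$. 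Unlike in Theorem~\ref{expcatan} we now have $\chi_0\neq 0$, but since $\|v^8_x\|=o(1)$ and $\|v^1_x\|=O(1)$ the residual term is still $o(1)$, so $\int_0^{L}\varsigma|v^1_x|^2\,dx=o(1)$, and then testing \eqref{AA7.10} with $\varsigma\overline{v^1}$ gives $\int_0^{L}\varsigma|\lambda v^1|^2\,dx=o(1)$. I expect this to be the main obstacle: one has to check that \emph{every} thermal cross-term (in $v^8,v^9$) is $o(1)$ under the $l=2$ scaling, which requires carefully tracking which quantities are $o(1/\lambda^{2})$, which are $o(1/\lambda)$, and which are merely $O(1)$ --- the bound $\|v^1_{xx}\|=O(\lambda)$ being the delicate point.

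To conclude, I would use $k_1=k_3$ as in Lemma~\ref{LE4.4}: testing \eqref{AA7.10} with $\varsigma\overline{v^3_x}$ and \eqref{AA7.12} with $\varsigma\overline{v^1_x}$, adding and taking real parts, the cross-term carries the coefficient $k_3-k_1=0$, so $\mathrm{l}(k_1+k_3)\int_0^{L}\varsigma|v^3_x|^2\,dx=o(1)$, hence $\int_0^{L}\varsigma|v^3_x|^2\,dx=o(1)$; testing \eqref{AA7.12} with $\varsigma\overline{v^3}$ then gives $\int_0^{L}\varsigma|\lambda v^3|^2\,dx=o(1)$. Collecting all the estimates gives $\|U\|_{\mathcal{H}}=o(1)$ on $(\alpha+\epsilon,\beta-\epsilon)$; since the $\varphi$- and $w$-equations coincide with the elastic ones while $v^2,v^8,v^9$ are already controlled on all of $(0,L)$, Lemma~\ref{LE3.3} propagates this to $\|U\|_{\mathcal{H}}=o(1)$ on $(0,L)$, contradicting $\|U\|_{\mathcal{H}}=1$. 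Thus {\rm (H3)} holds with $l=2$ and \cite{borichev:10} yields \eqref{polB}. (For Theorem~\ref{polycatan1} the very same scheme with $l=4$ is needed, because when $k_1\neq k_3$ the surviving term $(k_3-k_1)\Re\{\intspace\lambda\varsigma v^1_x\,\lambda^{-1}\overline{v^3_{xx}}\,dx\}$ requires the sharper bound $\|\varsigma v^1_x\|=o(1/\lambda)$, obtained as in Lemma~\ref{LE4.3}.)
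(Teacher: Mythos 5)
Your proposal is correct and follows the same overall scheme as the paper: reduce to condition {\rm (H3)} with $l=2$ via \cite{borichev:10}, argue by contradiction with the scaled resolvent sequence, extract the dissipation estimates for the memory term and $v^9$, get $\|v^2_x\|=o(1/\lambda)$, use $k_1=k_3$ to kill the cross-term in the $w$-multiplier identity \eqref{A7.41}, and propagate with Lemma \ref{LE3.3}. The one place where you genuinely deviate is the $\varphi$-estimate: the paper does \emph{not} rerun the $\chi_0$-elimination of Theorem \ref{expcatan}; it simply notes that \eqref{A7.16} and \eqref{catpol1} give $\|v^8_x\|^2=o(1/\lambda^{l-2})$ (its \eqref{catpol3}) and then repeats Lemmas \ref{LE4.3'}--\ref{LE4.3} verbatim on the $\psi$-equation, treating $\delta v^8_x$ as a harmless perturbation, which yields $\int_0^L\varsigma|v^1_x|^2dx=o(1/\lambda^{l-2})$ and $\int_0^L\varsigma|v^1|^2dx=o(1/\lambda^{l})$ uniformly for $2\le l\le 4$. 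Your detour through the analogues of \eqref{AA7.19}--\eqref{AA7.20} and the observation that the residual $\chi_0$-term is $o(1)$ because $\|v^8_x\|=o(1)$ is valid for $l=2$ (the dangerous term $\lambda^2\Re\{\int\varsigma v^2_x\overline{v^1}\,dx\}$ is in fact already $o(1)$ there since $\lambda\|v^2_x\|=o(1)$, so the elimination is not even needed), but it is heavier and does not deliver the sharper rate $o(1/\lambda^{l-2})$ on $\varsigma|v^1_x|^2$ that the $l=4$ case requires -- which you correctly anticipate by deferring to a Lemma \ref{LE4.3}-type bootstrap for Theorem \ref{polycatan1}, exactly the paper's route. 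Two small quantitative remarks: your claim $\|v^8\|^2=o(1/\lambda^2)$ is stronger than what a single pass of the $\overline{v^8}$-multiplier gives (one pass yields $o(1/\lambda)$, and only an iteration approaches the rate $2$); fortunately nothing in the argument needs more than $\|v^8\|=o(1)$ together with $\|v^8_x\|=o(1)$, so this over-claim is harmless. Likewise the scaled right-hand sides are of size $o(1/\lambda)$ only for the terms carrying $i\lambda f^j$; the remaining ones are $o(1/\lambda^2)$, which is what actually gets used.
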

Similar to Theorem \ref{TH4.2} and Theorem \ref{TH4.3}, we have to check   {\rm(H3)} where $l=4$ if condition \eqref{conditionA} holds and $l=2$ if condition \eqref{conditionB} holds. We will prove condition {\rm(H3)} by a contradiction argument, suppose  there exists  a sequence of real numbers $\left(\lambda_n\right)_n$, with  $\lambda_n\to+\infty,$ and a sequence of vectors
\begin{equation}\label{A7.6}
U_n=\left(v^1_n,v^2_n,v_n^3,v^4_n,v^5_n,v^6_n,v^7_n,v^8_n,v^9_n\right)^{\mathsf{T}}\in D\left(\mathcal{A}\right) \ \text{ with }\ \|U_n\|_{\mathcal{H}}=1
\end{equation}
such that
\begin{equation}\label{A7.7}
\lambda_n^l\left(\mathit{i}\lambda_n U_n-\mathcal{A}U_n\right)=\left(f^1_n,f^2_n,f_n^3,f^4_n,f^5_n,f^6_n,f^7_n,f^8_n,f^9_n\right)^{\mathsf{T}}\to 0\ \text{ in } \mathcal{H};
\end{equation}
Equivalently, we have 
\begin{eqnarray}
\mathit{i}\lambda_n v^1_n-v^4_n&=&h^1_n ,\label{A7.8}
\\
\mathit{i}\lambda_n v^2_n-v^5_n&=&h^2_n,\label{A7.9}
\\
\mathit{i}\lambda_n v^3_n-v^6_n&=&h^3_n, \label{A7.10}
\\
\rho_1\lambda_n^2 v^1_n+k_1 \left[\left(v^1_n\right)_x+v^2_n+\mathrm{l} v^3_n\right]_x+\mathrm{l} k_3\left[\left(v^3_n\right)_x-\mathrm{l} v^1_n\right]
&=&h^4_n,\label{A7.11}
\\
\rho_2\lambda_n^2 v^2_n   +\widetilde{k}_2 \left(v^2_n\right)_{xx}-k_1\left[\left(v^1_n\right)_x+v^2_n+\mathrm{l} v^3_n\right]+\int_0^{+\infty} g\left(s\right)\left(v^7_n\right)_{xx}ds-\delta\left(v^8_n\right)_x&=&h^5_n,\label{A7.12}
\\
\rho_1\lambda_n^2 v^3_n+k_3\left[\left(v^3_n\right)_x-\mathrm{l} v^1_n\right]_x-\mathrm{l} k_1\left[\left(v^1_n\right)_x+v^2_n+\mathrm{l}v^3_n\right]&=&h^6_n ,\label{A7.13}
\\
\mathit{i}\lambda_n v^7_n +\left(v^7_n\right)_{s}-\mathit{i}\lambda_n v^2_n &=& h^7_n,\label{A7.14}\\
\mathit{i}\rho_3\lambda_n v^8_n+\mathit{i}\delta\lambda_n \left( v^2_n\right)_x+\left(v^9_n\right)_x&=&h^8_n\label{A7.15}\\
\mathit{i}\tau\lambda_n v^9_n+\beta v^9_n+\left(v^8_n\right)_x&=&h^9_n\label{A7.16}.
\end{eqnarray}
where
\begin{equation*}
\left\{
\begin{array}{ll}
\displaystyle{\lambda_{n}^{l}h^1_n=f^1_n,\ \lambda_{n}^{l}h^2_n=f^2_n,\ \lambda_{n}^{l}h^3_n=f^3_n,}\\ \\ 
\displaystyle{\lambda_{n}^{l}h^4_n=-\rho_1 \left(
	f^4_n+\mathit{i}\lambda_nf^1_n\right),\ \lambda_{n}^{l}h^5_n=-\rho_2\left(  f^5_n+\mathit{i}\lambda_nf^2_n\right),\
	\lambda_{n}^{l}h^6_n=-\rho_1\left(  f^6_n+\mathit{i}\lambda_nf^3_n\right),} \\  \\ \displaystyle{\lambda_{n}^{l}h^7_n=f^7_n-f^2_n,\ \lambda_{n}^{l}h^8_n=\rho_3 f^8_n+\delta\left(f^2_n\right)_x,\ \lambda_{n}^{l}h^9_n=\tau f^9_n.}
\end{array}
\right.
\end{equation*}
In the sequel, for shortness, we drop the index n. Taking the inner product of \eqref{A7.7} with $U$ in $\mathcal{H}$. Then, using  \eqref{A7.3}, hypothesis  {\rm (H)} and the fact that $U$ is uniformly bounded in $\mathcal{H}$, we get 
\begin{equation}\label{catpol1}
\intspace\inttime g\left(s\right)\left|v^7_x\right|^2ds dx=o\left(\frac{1}{\lambda^l}\right)\ \ \text{ and } \ \ \intspace\left|v^9\right|^2dx=o\left(\frac{1}{\lambda^l}\right).
\end{equation}
Similar to Lemma \ref{LE4.2}, multiplying  \eqref{A7.14}  by $\overline{v^2}$
in $L^2_g\left(\mathbb{R}_+,H_0^1\right)$. Then, using \eqref{A7.7} and \eqref{catpol1},   we get
\begin{equation}\label{catpol2}
\intspace\left|v^2_x\right|^2dx=o\left(\frac{1}{\lambda^l}\right).
\end{equation}
From \eqref{A7.16} and \eqref{catpol1}, we get
\begin{equation}\label{catpol3}
\intspace\left|v^8_x\right|^2dx=o\left(\frac{1}{\lambda^{l-2}}\right).
\end{equation}
Similar to Lemma \ref{LE4.3'} and using \eqref{catpol3},  we can prove that for $2\leq l\leq 4$, we have
$$
\int_0^{L}\varsigma\left| v^1_x\right|^2dx=o\left(\frac{ 1}{ \lambda^{\frac{l}{2}-1}}\right),
$$
where  $\varsigma$ is the cut-off function defined in Subsection  \ref{se3}.
Consequently, for $2\leq l\leq 4$,  we can adapt the proof of Lemma \ref{LE4.3} to show that
\begin{equation}\label{catpol4}
\int_0^{L}\varsigma\left| v^1_x\right|^2dx=o\left(\frac{1}{ \lambda^{l-2}}\right)\ \text{ and } \int^{L}_0\varsigma\left| v^1\right|^2dx=o\left(\frac{1}{\lambda^{l}}\right).
\end{equation}
Finally, multiplying \eqref{A7.11} and \eqref{A7.13} by $\varsigma \overline{v^3_x}$   and $\varsigma \overline{v^1_x}$  respectively in $L^2\left(0,L\right)$. Then, take the real part of the resulting equation, using \eqref{A7.7}, \eqref{catpol1}-\eqref{catpol2} and \eqref{catpol4},  we get
\begin{equation}\label{A7.41}
\mathrm{l}(k_1+k_3)\int_0^{L}\varsigma\left|v^3_x\right|^2dx+(k_3-k_1)\Re\left\{\intspace\lambda \varsigma v^1_{x} \lambda^{-1}\overline{ v^3_{xx}}dx\right\}=o\left(1\right).
\end{equation}
\noindent\textbf{Proof of  Theorem \ref{polycatan1}} If \eqref{conditionA} hold,  we remark that  $l=4$ is the optimal value we can choose to get
\begin{equation}\label{A7.46}
\intspace\lambda \varsigma v^1_{x} \lambda^{-1}\overline{ v^3_{xx}}dx=o\left(1\right).
\end{equation}
Therefore, from \eqref{A7.41} and \eqref{A7.46}, we get
\begin{equation}\label{A7.45}
\int_0^{L}\varsigma\left|v^3_x\right|^2dx =o\left(1\right).
\end{equation}
Proceeding similar  to the proof of Theorem \ref{TH4.2}, we get  the result of Theorem \ref{polycatan1}.\\

\noindent\textbf{Proof of  Theorem \ref{polycatan1}} If \eqref{conditionB} hold,  then \eqref{A7.41} yields directly \eqref{A7.45}. In this case, we choose  $l=2$ as the optimal value of $2\leq l\leq4$. Proceeding similar  to the proof of Theorem \ref{TH4.3}, we get  the result of Theorem \ref{polycatan2}.
\begin{rk}
	Following Theorem 4.1 in \cite{Wehbenadine} the energy of the Bresse system with fully Dirichlet or mixed boundary conditions decays as $\dfrac{1}{\sqrt{t}}$ if only one thermal dissipation given by Fourier law is considered and $\dfrac{1}{\sqrt[3]{t}}$ if only one thermal dissipation given by Cattaneo law is considered. 
\end{rk}
\section{ Conclusion and open questions}
Bresse system \eqref{eqq1.1'} with dissipative thermal effect given by Cattaneo's law and history type control is expected to decay faster than system \eqref{eqq1.1} without heat conduction. Nevertheless, Theorem \ref{TH4.2} and Theorem \ref{polycatan1} show that the heat dissipation does not affect the rate of energy decay. Consequently, the optimality of the polynomial decay rate of  system \eqref{eqq1.1'} and the influence of the Cattaneo law on the stability of the system remain an open problem.




\end{document}